\newtheorem*{theorem*}{Theorem}
\newtheorem{theoremcounter}{Theorem Counter}[section]
\theoremstyle{remark}
\newtheorem*{remark*}{Remark}
\theoremstyle{definition}
\newtheorem*{example*}{Example}
\theoremstyle{plain}
\newtheorem{lemma}[theoremcounter]{Lemma}
\newtheorem{proposition}[theoremcounter]{Proposition}
\newtheorem{corollary}[theoremcounter]{Corollary}
\newtheorem{theorem}[theoremcounter]{Theorem}
\numberwithin{equation}{section}
\newcommand{\Z}{\mathbb{Z}}
\newcommand{\R}{\mathbb{R}}
\newcommand{\C}{\mathbb{C}}
\newcommand{\bt}{\mathbf{t}}
\newcommand{\e}{\mathbf{e}}
\newcommand{\fe}{\mathfrak{e}}
\newcommand{\dd}{\mathrm{d}}
\newcommand{\bbH}{\mathbb{H}}
\newcommand{\calG}{\mathcal{G}}
\newcommand{\calM}{\mathcal{M}}
\newcommand{\calO}{\mathcal{O}}
\newcommand{\calW}{\mathcal{W}}
\DeclareMathOperator{\ImNew}{Im}
\renewcommand{\Im}{\ImNew}
\DeclareMathOperator{\ReNew}{Re}
\renewcommand{\Re}{\ReNew}
\DeclareMathOperator{\Mp}{Mp}
\DeclareMathOperator{\SL}{SL}
\DeclareMathOperator{\PSL}{PSL}
\DeclareMathOperator{\GL}{GL}
\DeclareMathOperator{\sgn}{sgn}
\newcommand{\pmat}[1]{\begin{pmatrix}#1\end{pmatrix}}
\newcommand{\smat}[1]{\bigl(\begin{smallmatrix}#1\end{smallmatrix}\bigr)}
\def\ov#1{\overline{#1}}
\def\un#1{\underline{#1}}
\begin{document}

\title[]{Eichler--Selberg relations for singular moduli} 

\author{Yuqi Deng}
\address{Graduate School of Mathematics, Kyushu University, Motooka 744, Nishi-ku, Fukuoka 819-0395, Japan}
\email{deng.yuqi.608@s.kyushu-u.ac.jp}

\author{Toshiki Matsusaka}
\address{Faculty of Mathematics, Kyushu University, Motooka 744, Nishi-ku, Fukuoka 819-0395, Japan}
\email{matsusaka@math.kyushu-u.ac.jp}

\author{Ken Ono}
\address{Department of Mathematics, University of Virginia, Charlottesville, VA 22904, USA}
\email{ko5wk@virginia.edu}

\subjclass[2020]{Primary 11F37; Secondary 11F50, 11F67}



\maketitle

\begin{abstract}  The Eichler--Selberg trace formula expresses the trace of Hecke operators  on spaces of cusp forms as weighted sums of Hurwitz--Kronecker class numbers. We extend this formula to a natural class of relations for traces of singular moduli, where one views class numbers as traces of the constant function $j_0(\tau)=1$. More generally, we consider the singular moduli for the Hecke system of modular functions
$$
j_m(\tau) \coloneqq  mT_m\left( j(\tau)-744\right).
$$ 
For each $\nu\geq 0$ and  $m\geq 1$, we obtain an {\it Eichler--Selberg relation}. For $\nu=0$ and $m\in \{1, 2\},$ these relations are
Kaneko's celebrated singular moduli formulas for the coefficients of $j(\tau).$ For each $\nu\geq 1$ and  $m\geq 1,$ we obtain a new Eichler--Selberg trace formula for the Hecke action on the space of weight $2\nu+2$ cusp forms, where the traces of $j_m(\tau)$ singular moduli replace Hurwitz--Kronecker class numbers. These formulas involve a new term that is assembled from values of symmetrized shifted convolution $L$-functions.
\end{abstract}


\section{Introduction and Statement of Results}\label{section-intro}

Let $j(\tau)$ be the usual modular function for $\SL_2(\Z)$ with Fourier expansion
$$
j(\tau)=q^{-1}+744+196884q+21493760q^2+\cdots,
$$
where $q \coloneqq e^{2\pi i \tau}.$ Its values at imaginary quadratic arguments in the upper-half of the complex plane are  examples of {\it singular moduli} \cite{Zagier2002}. They are algebraic integers that generate Hilbert class fields of imaginary quadratic fields, in addition to serving as isomorphism class invariants of elliptic curves with complex multiplication.
Well-known examples of these values include:
$$
j\left(\frac{1+\sqrt{-3}}{2}\right)=0,\ \ j(i)=1728, \ \ {\text {\rm and}}\ \ 
j\left(\frac{1+\sqrt{-15}}{2}\right)=\frac{-191025-85995\sqrt{5}}{2}.
$$

We consider the sequence of modular functions $j_0(\tau) \coloneqq 1, j_1(\tau) \coloneqq j(\tau)-744,\dots$ that satisfy
$$
j_m(\tau) = q^{-m} + O(q).
$$
Each $j_m(\tau)$ is a monic degree $m$ polynomial in $\Z[j(\tau)],$ and the set
 $\{j_m(\tau) : m \ge 0\}$ is a basis of $M^!_0$,  the space of weakly holomorphic modular functions on $\SL_2(\Z)$.
The first examples are $j_0(\tau) = 1$ and
	\begin{align*}
		j_1(\tau) &= j(\tau) - 744 = q^{-1} + 196884q + \cdots,\\
		j_2(\tau) &= j(\tau)^2 - 1488j(\tau) + 159768 = q^{-2} + 42987520q + \cdots,\\
		j_3(\tau) &= j(\tau)^3 - 2232 j(\tau)^2 + 1069956 j(\tau) - 36866976 = q^{-3} + 2592899910q + \cdots.
	\end{align*}
In terms of the  Hecke operators $T_m$ (see ~\cite[Ch.~VII]{Serre1973} and \cite{Zagier2002}), for positive integers $m$ we have
\begin{equation}\label{jm}
j_m(\tau)=q^{-m}+\sum_{n=1}^{\infty}c_m(n)q^n= mT_m\left(j(\tau)-744\right).
\end{equation}

We shall derive infinitely many relations for the singular moduli of these functions.
To make this precise,
for positive integers $d$ with $-d\equiv 0,1 \pmod 4$, we let $\mathcal{Q}_d$ be the set of integral positive definite binary quadratic forms $Q(X,Y) = [A,B,C] \coloneqq AX^2 + BXY + CY^2$ with discriminant $-d=B^2-4AC$. The group $\Gamma \coloneqq \PSL_2(\Z)$ acts on $\mathcal{Q}_d$ by
\[
	\left(Q \circ \pmat{a & b \\ c & d}\right) (X,Y)  \coloneqq  Q(aX+bY, cX+dY),
\]
and does so with finitely many orbits, the number of which is the discriminant $-d$ {\it class number}.
For each $Q \in \mathcal{Q}_d$, we let $\alpha_Q \in \bbH$ be a root of $Q(\tau,1) = 0.$ The numbers $j_m(\alpha_Q)$ are its {\it singular moduli}.

We study the weighted traces of these values which are defined as follows.
If we let $\Gamma_Q$ be the stabilizer of $Q$ in $\Gamma$, then it is well known that
\[
	\#\Gamma_Q = \begin{cases}
		3 &\text{if } Q \sim a(X^2 + XY + Y^2),\\
		2 &\text{if } Q \sim a(X^2 + Y^2),\\
		1 &\text{if otherwise}.
	\end{cases}
\]
Following Zagier \cite{Zagier2002}, the {\it trace functions} we consider are
\begin{equation}
	\bt_m(d) \coloneqq \sum_{Q \in \mathcal{Q}_d/\Gamma} \frac{j_m(\alpha_Q)}{\# \Gamma_Q}.
\end{equation}
For $m = 0$, where $j_0(\tau)=1,$ we obtain the {\it Hurwitz--Kronecker} class numbers
$H(d) \coloneqq \bt_0(d).$  These  numbers are prominent in the Eichler--Selberg trace formula (for example, see \cite{Zagier1991}) for the action of the Hecke operators on $S_{2k},$ the complex vector space of  weight $2k$ cusp forms on $\SL_2(\Z).$ 
\begin{theorem*}[The Eichler--Selberg trace formula]\label{Thm:ES}
	For integers $k \ge 2$, we have
	\begin{equation}\label{ESTF}
		\mathrm{Tr}(n; 2k) = -\frac{1}{2} \sum_{r \in \Z} p_{2k} (r,n) \bt_0(4n-r^2) - \lambda_{2k-1}(n),
	\end{equation}
	where $\lambda_k(n) \coloneqq \frac{1}{2} \sum_{d \mid n} \min(d, n/d)^k$ and 
	$$p_k(r,n) = \sum_{0 \le j \le \frac{k}{2}-1} (-1)^j \binom{k-2-j}{j} n^j r^{k-2-2j} = \mathrm{Coeff}_{X^{k-2}} \left(\frac{1}{1-rX+nX^2}\right).
	$$
\end{theorem*}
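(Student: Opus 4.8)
The plan is to prove the formula by the classical Eichler--Selberg method: express $\mathrm{Tr}(n;2k)$ as the integral over a fundamental domain $\mathcal{F}$ for $\Gamma$ of the Hecke operator applied to the reproducing kernel of $S_{2k}$, and then unfold this integral along $\Gamma$-conjugacy classes of integral $2\times 2$ matrices of determinant $n$.

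First I would fix an orthonormal basis $\{f_i\}$ of $S_{2k}$ for the Petersson inner product and form the Bergman kernel $B_{2k}(\tau,z)\coloneqq \sum_i f_i(\tau)\overline{f_i(z)}$, which up to an explicit constant depending only on $k$ equals the weight-$2k$ Poincaré kernel $\sum_{\gamma\in\Gamma}(c_\gamma\tau+d_\gamma)^{-2k}(\gamma\tau-\bar z)^{-2k}$ attached to the point-pair invariant $(\tau-\bar z)^{-2k}$; this series converges absolutely since $2k\ge 4$. Because $T_n$ is self-adjoint and $S_{2k}$ is finite-dimensional, $\mathrm{Tr}(n;2k)=\sum_i\langle T_nf_i,f_i\rangle$, and interchanging the (finite) sum with the integral gives
\begin{equation*}
\mathrm{Tr}(n;2k) = \int_{\mathcal{F}}\Bigl[(T_n)_\tau\,B_{2k}(\tau,z)\Bigr]_{z=\tau}\,y^{2k}\,\frac{dx\,dy}{y^{2}}.
\end{equation*}
Substituting the Poincaré-series expression for $B_{2k}$ and unwinding the Hecke action, the bracketed integrand becomes, up to an explicit constant and powers of $\det M=n$, the sum $\sum_{M:\,\det M=n}(c_M\tau+d_M)^{-2k}(M\tau-\bar\tau)^{-2k}$ over integral matrices $M=\smat{a&b\\ c&d}$.

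The heart of the argument is the unfolding. The sum over matrices of determinant $n$ is invariant under $\Gamma$-conjugation, so --- working with a height truncation $\mathcal{F}_T=\{\tau\in\mathcal{F}:y\le T\}$ to stay in convergent territory --- the integral unfolds into a sum over representatives of $\Gamma$-conjugacy classes, each contribution a truncated integral over $\Gamma_M\backslash\bbH$, with $\Gamma_M$ the centralizer of $M$ in $\Gamma$. I would sort the classes by $t=\mathrm{tr}(M)$ and $\Delta=t^2-4n$: \emph{elliptic} ($\Delta<0$), \emph{split hyperbolic} ($\Delta>0$ a perfect square), \emph{non-split hyperbolic} ($\Delta>0$ not a square), \emph{parabolic} ($\Delta=0$), and the \emph{central} class $\sqrt n\,I$, the last two occurring only when $n$ is a perfect square. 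An elliptic $M$ fixes a point $\alpha_M\in\bbH$, namely a root of $c\tau^2+(d-a)\tau-b$, of discriminant $t^2-4n<0$; the quotient $\Gamma_M\backslash\bbH$ has finite volume, and as $T\to\infty$ the truncated integral converges to an elementary integral whose evaluation (by a contour shift, together with the generating-function identity for $p_{2k}$) produces exactly the weight $p_{2k}(t,n)$. The number of such classes with a given $t$, counted with weight $1/\#\Gamma_M$, is precisely $\bt_0(4n-t^2)=H(4n-t^2)$ via the standard dictionary between these classes and $\Gamma$-classes of binary quadratic forms of discriminant $t^2-4n$ (the stabilizers matching), so summing over $t^2\le 4n$ yields the term $-\tfrac12\sum_{r}p_{2k}(r,n)\bt_0(4n-r^2)$, with $\bt_0(0)=-\tfrac1{12}$ accounting for $t^2=4n$. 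The non-split hyperbolic classes contribute nothing to the trace (the relevant integral over the hyperbolic cylinder vanishes). Finally, the split-hyperbolic, parabolic, and central classes have individually $T$-divergent truncated integrals; grouped together, the divergences cancel and the finite remainder is exactly $-\lambda_{2k-1}(n)=-\tfrac12\sum_{d\mid n}\min(d,n/d)^{2k-1}$. Adding the three pieces gives the stated identity.

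The main obstacle is exactly the treatment of the split-hyperbolic, parabolic, and central classes: the unfolded per-class integrals diverge at the boundary, so one cannot simply rearrange the unfolded sum termwise, and must instead carry the height truncation through the entire computation and verify that all $T$-dependence cancels while extracting the correct finite parts --- this is the source of the term $\lambda_{2k-1}(n)$ in place of a naive class-number sum, and where essentially all of the subtlety lies. A secondary, purely computational, point is the evaluation of the elliptic integral and the recognition of its value as $\mathrm{Coeff}_{X^{2k-2}}\left(\frac{1}{1-tX+nX^2}\right)$; I would isolate this as a lemma, since it is a Beta-integral/residue calculation combined with the generating-function identity for $p_{2k}$. (An alternative derivation, closer in spirit to the methods used for this paper's higher-$\nu$ generalizations, applies holomorphic projection to the product of Zagier's weight-$\tfrac32$ Eisenstein series $\sum_{D\ge 0}H(D)q^{D}$ with a suitable weight-$(2k-\tfrac32)$ theta series, reading off the class-number sum from the Rankin--Cohen-type Fourier coefficients and the $\lambda$-term from the non-holomorphic defect of the holomorphic projection.)
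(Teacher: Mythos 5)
Your outline is correct, but it follows the classical Eichler--Selberg/Zagier conjugacy-class argument, which is genuinely different from the route this paper takes. The paper (following Zagier's unpublished 1991 notes, recalled in Section 2.1) never touches the Bergman kernel or orbital integrals: it starts from the fact that $\sum H(d)q^d$ is the holomorphic part of the weight $3/2$ harmonic Maass form $\mathcal{H}$, forms the Rankin--Cohen bracket $[\mathcal{H},\theta]_\nu|U_4$, and computes the resulting cusp form in two ways --- combinatorially, where holomorphic projection of the non-holomorphic part produces the correction term $\lambda_{2k-1}(n)$ alongside the class-number sum, and spectrally, where Rankin--Selberg unfolding gives the Petersson products $\langle \pi_{\mathrm{hol}}([\mathcal{H},\theta]_\nu|U_4), f_j\rangle = -2\binom{2\nu}{\nu}\|f_j\|^2$, hence the trace. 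Your approach is more self-contained (no half-integral weight or harmonic Maass form input) and exhibits the arithmetic meaning of each term directly through elliptic, split-hyperbolic, parabolic and central classes; its price is exactly the regularization you flag: the divergent split-hyperbolic/parabolic/central orbital integrals must be truncated (or, as in Zagier's appendix to Lang's book, handled by the Hecke trick with an auxiliary $s$-variable and analytic continuation), and the vanishing of the non-split hyperbolic contributions and the evaluation of the elliptic integral as $p_{2k}(t,n)$ are real computations that your sketch defers to lemmas. The paper's method hides all of that divergence bookkeeping inside Zagier's theorem on $\mathcal{H}$ and the holomorphic-projection formula, and --- more importantly for this paper --- it is the version that generalizes: replacing $\mathcal{H}$ by $g_m$ is precisely how Theorems 1.1--1.4 are obtained, a deformation your conjugacy-class computation does not obviously admit. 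Your closing parenthetical correctly identifies this alternative; as written, your argument is a sound plan for the classical statement, with the acknowledged truncation/cancellation analysis being the substantial work left to do.
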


We generalize these formulas to traces of singular moduli, where \eqref{ESTF} are the $m=0$ cases of a doubly infinite suite of formulas in $m\geq 0$ and  $\nu\geq 0.$ The general formulas involve the trace functions $\bt_m(4n-r^2)$. 
To make this  precise, for every $\nu\geq 0$ and $m\geq 0,$ we define the generating function
\begin{equation}\label{KeyGF}
	\calG_{m,\nu}(\tau) \coloneqq -\frac{1}{2}\sum_{n \gg -\infty} \sum_{r \in \Z} p_{2\nu+2}(r,n) \bt_m(4n-r^2) q^n,
\end{equation}
where for $d\le 0$ we let
\begin{equation}\label{eq:tm-neg}
	\bt_m(d) \coloneqq \begin{cases}
		2\sigma_1(m) &\text{if } d=0,\\
		-\kappa &\text{if } d = -\kappa^2 \text{ and } \kappa \mid m,\\
		0 &\text{if otherwise}.
	\end{cases}
\end{equation}
By \eqref{ESTF}, each $\mathrm{Tr}(n;2k)$ is essentially the $n$th coefficient of $\calG_{0,k-1}(\tau).$ 
Therefore, we refer to any explicit formula for $\calG_{m,\nu}(\tau)$ as an Eichler--Selberg relation for $m$ and $\nu.$

Our first result establishes that these generating functions are {\it weakly holomorphic modular forms},
meromorphic modular forms whose poles (if any) are supported at cusps.
For convenience, we let $M^!_k$ denote the space of such weight $k$ forms on $\SL_2(\Z).$ 

\begin{theorem}\label{FundGF}
If $\nu\geq 0$ and $m\geq 1,$ then we have that
$\calG_{m,\nu}(\tau)\in M^!_{2\nu+2}.$
\end{theorem}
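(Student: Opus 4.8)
The plan is to realize each $\calG_{m,\nu}(\tau)$ as the holomorphic part of a harmonic Maass form built from Zagier-style generating series for traces of singular moduli, and then to cancel the non-holomorphic part by an explicit linear-algebra argument using the structure of $M^!_{2\nu+2}$. First I would recall (or reprove) Zagier's result that, for a fixed modular function $f \in M^!_0$ such as $j_m$, the two-variable generating series $\sum_{d} \bt_m(d) q^d$ (with the negative-$d$ terms given by \eqref{eq:tm-neg}) is a weight $3/2$ weakly holomorphic modular form on $\Gamma_0(4)$ in the Kohnen plus space; this is the engine that converts statements about the trace functions $\bt_m(4n-r^2)$ into statements about modular forms. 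The sum over $r \in \Z$ of $p_{2\nu+2}(r,n)\bt_m(4n-r^2)$ that defines $\calG_{m,\nu}$ is then recognizable as a theta-type lift (a Shimura/Shintani-type or ``Zagier'' lift) applied to that weight $3/2$ object, since $p_{2\nu+2}(r,n)$ is, up to normalization, the coefficient arising from a Gegenbauer polynomial kernel, exactly the kernel producing the classical Eichler--Selberg formula when $m=0$.

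The key steps, in order, would be: (1) express $\calG_{m,\nu}$ via the theta lift of the weight $3/2$ form $\Omega_m(\tau) := \sum_d \bt_m(d)q^d$ against the standard Siegel theta kernel twisted by the Gegenbauer polynomial of degree $\nu$, so that modularity of weight $2\nu+2$ is automatic from the transformation law of the kernel; (2) check the growth at the cusp, i.e.\ that the lift is weakly holomorphic rather than merely meromorphic in the upper half plane — here the principal parts of $j_m$ and the finitely many negative-discriminant contributions in \eqref{eq:tm-neg} produce only finitely many negative $q$-powers; (3) verify that the ``completion'' term that a priori appears in such a lift (a period integral / Eichler integral of a weight $2\nu+2$ cusp form, or an Eisenstein-type non-holomorphic piece) actually vanishes or is itself holomorphic for $m \ge 1$, so that $\calG_{m,\nu}$ lands in $M^!_{2\nu+2}$ on the nose. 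An alternative, more hands-on route for step (1)--(3) is to directly apply a Hecke operator $T_m$ to the $m=1$ (or $m=0$) case: since $j_m = mT_m(j-744)$ and the trace functions are compatible with the Hecke action, one might deduce $\calG_{m,\nu} \in M^!_{2\nu+2}$ from $\calG_{1,\nu}\in M^!_{2\nu+2}$ together with the fact that $M^!_{2\nu+2}$ is preserved by $T_m$; this reduces everything to a single base case.

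I expect the main obstacle to be step (3): controlling the non-holomorphic correction. The classical Eichler--Selberg formula already shows that at $m=0$ the analogous generating function is \emph{not} quite in $M^!_{2k}$ — one must add the correction $\lambda_{2k-1}(n)$, reflecting an Eisenstein/period contribution. The content of the theorem is precisely that this defect disappears once $m \ge 1$ (equivalently, once one works with a genuinely cuspidal-side input $j_m$ with $m \ge 1$ rather than the constant $j_0=1$), and pinning down why requires a careful analysis of the constant term and the spectral content of the theta lift, or equivalently of the ``extra'' $\lambda$-type term for general $m$. I would handle this by computing the relevant regularized inner product / unfolding explicitly and showing the would-be period integral pairs trivially because the input has no $\Gamma$-invariant (constant) component when $m \ge 1$; the new symmetrized shifted-convolution $L$-function term mentioned in the abstract presumably emerges here as the \emph{finite} part that survives, and must be shown to contribute only to the holomorphic coefficients. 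The remaining steps (theta kernel transformation law, principal part bookkeeping, Hecke-operator compatibility of $\bt_m$) are routine and I would treat them briefly.
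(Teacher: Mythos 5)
Your steps (1)--(2) are essentially the paper's argument in a different guise: the proof of \cref{FundGF} also starts from Zagier's theorem \cite{Zagier2002} that $g_m(\tau)$ in \eqref{gm} is a weakly holomorphic weight $3/2$ form on $\Gamma_0(4)$ for $m\ge 1$, and pairs it with the theta function --- concretely via the Rankin--Cohen bracket, setting $\calG_{m,\nu}=-\frac{1}{2\binom{2\nu}{\nu}}[g_m,\theta]_\nu|U_4$, which reproduces \eqref{KeyGF} by the same combinatorial computation as \eqref{eq:RC-calc} and lies in $M^!_{2\nu+2}$ by \cite[Theorem 5.5]{EichlerZagier1985}. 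The Gegenbauer-kernel theta lift you describe is the integral-kernel avatar of exactly this bracket construction, and your bookkeeping of principal parts is fine.

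However, your step (3) --- which you single out as the crux --- rests on a misdiagnosis and, as written, is a genuine gap: you plan to control a non-holomorphic completion/period correction by a regularized unfolding and expect the symmetrized shifted convolution $L$-functions to surface there. For $m\ge 1$ there is nothing to control: $g_m$ is honestly weakly holomorphic (unlike $\mathcal{H}$ at $m=0$, which is only the holomorphic part of a harmonic Maass form), the bracket is a purely algebraic differential operation, and no holomorphic projection, regularization, or correction term ever enters. The $\lambda_{2k-1}(n)$ term in the classical trace formula is precisely the holomorphic-projection artifact of the non-holomorphic part of $\mathcal{H}$, so the ``defect'' disappears for $m\ge 1$ by the choice of input, not after a spectral or regularized-inner-product analysis; setting the argument up as a regularized theta integral would only create convergence issues the bracket formulation never faces. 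The shifted convolution $L$-values have nothing to do with \cref{FundGF}; they belong to \cref{Main-Theorem}, where they arise from Petersson inner products of $\calG_{m,\nu}$ with Hecke eigenforms via Maass--Poincar\'e series. Finally, your proposed shortcut via $j_m=mT_m(j-744)$ does not directly reduce the theorem to $m=1$: the weight $2\nu+2$ Hecke operators do not act on \eqref{KeyGF} in the naive way in the $m$-aspect, and one would instead need the Hecke relations among the half-integral weight forms $g_m$ together with a Shimura-type intertwining of the lift --- extra work that the direct bracket argument renders unnecessary.
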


The $\nu=0$ Eichler--Selberg relations only involve derivatives of the $j_m(\tau)$, as they generate $M^!_2$  due to the absence of holomorphic modular forms.  For convenience, we let $D \coloneqq \frac{1}{2\pi i}\frac{\dd}{\dd \tau} = q \frac{\dd}{\dd q}$.

\begin{theorem}\label{EasyEichlerSelberg_nu0} For positive integers $m,$ the following are true.

\noindent
1) We have 
\[
	\calG_{m,0}(\tau)	= -\frac{1}{2}\sum_{\kappa \mid m} \sum_{0 < r < \kappa} \frac{\kappa}{r(\kappa-r)} \cdot Dj_{r(\kappa-r)}(\tau).
\]

\noindent
2) If $n$ is a positive integer, then we have
\[
	\sum_{r \in \Z} \bt_m(4n-r^2) = n \sum_{\kappa \mid m} \sum_{0 < r < \kappa} \frac{\kappa}{r(\kappa-r)} c_{r(\kappa-r)}(n).
\]
\end{theorem}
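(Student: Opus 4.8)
The plan is to deduce \cref{EasyEichlerSelberg_nu0} directly from \cref{FundGF} together with basic structure theory of the space $M^!_2$. First I would observe that by \eqref{KeyGF} the generating function $\calG_{m,0}(\tau)$ is a weight $2$ weakly holomorphic modular form on $\SL_2(\Z)$ by \cref{FundGF}. Since there are no nonzero holomorphic modular forms of weight $2$ on $\SL_2(\Z)$, and since $D$ maps $M^!_0$ into $M^!_2$, the space $M^!_2$ is spanned by the functions $Dj_n(\tau)$ for $n\ge 1$; indeed $Dj_n(\tau)=-nq^{-n}+O(q)$ has no constant term, and these principal parts span all possible principal parts with vanishing constant term, which is exactly the constraint forced on a weight $2$ weakly holomorphic form by the pairing against the weight $0$ holomorphic form $1$ (equivalently, the residue theorem: $\sum_n \mathrm{Coeff}_{q^n}\!\big(f(\tau)\big)\cdot(\text{stuff})$ forces the constant term to be determined). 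So $\calG_{m,0}$ is completely determined by its principal part, i.e.\ by the negative-index coefficients $\bt_m(4n-r^2)$ for $n\le 0$, which are given explicitly by \eqref{eq:tm-neg}.

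Second, I would compute that principal part. For $n\le 0$ the only contributions come from $d=4n-r^2\ge 0$, i.e.\ $d=0$ (forcing $n\le 0$ with $r^2=-4n$, so $n=-s^2$ and $r=\pm 2s$) and $d=-\kappa^2$ with $\kappa\mid m$ (forcing $4n=r^2-\kappa^2=(r-\kappa)(r+\kappa)$). Writing $r-\kappa=-2a$, $r+\kappa=2b$ — wait, more cleanly: set $r=\kappa-2\ell$... the point is that the pairs $(r,\kappa)$ with $4n=r^2-\kappa^2$ and $0<|r|<\kappa$ are parametrized so that $n=-\big(\text{something}\big)$; a short computation shows $4n = r^2-\kappa^2$ has solutions with $n$ a negative integer exactly when we can write, with $0<r<\kappa$, the index $r(\kappa-r)$ appearing, and the coefficient of $q^{-n}$ assembles into $-\tfrac12\sum_{\kappa\mid m}\sum_{0<r<\kappa}\tfrac{\kappa}{r(\kappa-r)}\cdot\big(\text{coefficient of }q^{-r(\kappa-r)}\text{ in }-r(\kappa-r)Dj_{r(\kappa-r)}\big)$ — one also checks the $d=0$ term $2\sigma_1(m)$ contributes only to the (vanishing) constant term after summing $p_2(r,n)=1$ over the finitely many relevant $r$, so it drops out. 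Matching principal parts then yields part 1), and part 2) is obtained by reading off the $q^n$-coefficient ($n\ge 1$) on both sides: the left side is $-\tfrac12\sum_r p_2(r,n)\bt_m(4n-r^2)=-\tfrac12\sum_r\bt_m(4n-r^2)$ since $p_2(r,n)=1$ identically, and the right side is $-\tfrac12\sum_{\kappa\mid m}\sum_{0<r<\kappa}\tfrac{\kappa}{r(\kappa-r)}\cdot\big(r(\kappa-r)\big)\cdot\big(-c_{r(\kappa-r)}(n)\big)\cdot\tfrac{1}{r(\kappa-r)}$, and cleaning up the constants gives the claimed identity (after multiplying through by $-2$... let me be careful: $Dj_\ell(\tau)=\sum_n \ell c_\ell(n)\cdot$ hmm, actually $Dj_\ell = -\ell q^{-\ell}+\sum_{n\ge 1} n c_\ell(n) q^n$, so the $q^n$ coefficient is $n c_\ell(n)$, producing the factor $n$ on the right of part 2).

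The main obstacle I anticipate is purely bookkeeping: correctly enumerating the solutions of $4n-r^2=-\kappa^2$ with the sign and range conventions on $r$, and verifying that the substitution $\ell=r(\kappa-r)$ (with $0<r<\kappa$) together with the weight $\kappa/\big(r(\kappa-r)\big)$ exactly reproduces the principal part coefficients $-\kappa$ from \eqref{eq:tm-neg} with the right multiplicity — each unordered factorization versus each value of $r$ in $(0,\kappa)$ — and that no spurious contribution survives from $d=0$. There is also a small subtlety in invoking that $\{Dj_n : n\ge1\}$ is a basis of $M^!_2$ and that a weight $2$ weakly holomorphic form with prescribed principal part and zero constant term is unique; this follows since $M_2(\SL_2(\Z))=\{0\}$ and the constant term of any element of $M^!_2$ is forced to vanish (pair against the weight $0$ cusp-form-like constraint, i.e.\ apply the residue theorem to $f(\tau)\,d\tau$ on a fundamental domain). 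Once these structural facts are in hand, parts 1) and 2) fall out by comparing Fourier coefficients, so the proof is short modulo the combinatorial identity for the principal part.
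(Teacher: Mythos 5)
Your proposal is correct and follows essentially the same route as the paper: invoke \cref{FundGF}, use $M_2=\{0\}$ (so a weight~$2$ weakly holomorphic form is determined by its principal part, which the derivatives $Dj_\ell$ span), compute the principal part of $\calG_{m,0}$ from \eqref{eq:tm-neg} via the substitution $r=\kappa-2\ell$ giving $-n=\ell(\kappa-\ell)$, and read off part 2) by comparing $q^n$-coefficients with $p_2(r,n)=1$ and $Dj_\ell=-\ell q^{-\ell}+\sum_{n\ge1}nc_\ell(n)q^n$. The bookkeeping you defer is exactly the step the paper also leaves implicit, and your flagged subtleties (vanishing constant term, uniqueness given the principal part) are handled correctly.
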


\begin{example*} 
	\cref{EasyEichlerSelberg_nu0}, with $m\in \{1, 2\}$, gives Kaneko's identities~\cite{Kaneko1996}
	$$
	\sum_{r\in \Z} \bt_1(4n-r^2)=0\ \ \ \ {\text {\rm and}}\ \ \ \
	\sum_{r\in \Z} \bt_2(4n-r^2)=2nc_1(n),
	$$
	which he used to derive his well-known singular moduli formula for the coefficients of $j(\tau)$
	$$
	c_1(n)=\frac{1}{n}\left \{ \sum_{r \in \Z} \bt_1(n-r^2)+\sum_{r\geq 1\ odd}\left( (-1)^n \bt_1(4n-r^2)-\bt_1(16n-r^2)\right)\right\}.
	$$
Such formulas have been extended to higher levels $N$ in subsequent works~\cite{Ohta2009, MatsusakaOsanai2017, Matsusaka2017}. 
\end{example*}

For $\nu > 0$, there are holomorphic modular forms, and so the relations have richer structure. To make this precise, we recall the weight $2k$ modular Poincar\'e series~\cite[Ch.~6.3]{BFOR2017}
\begin{align}\label{def:classical-Poincare}
	P_{2k,h}(\tau) \coloneqq \sum_{\gamma \in \Gamma_\infty \backslash \Gamma} q^h |_{2k} \gamma,
\end{align}
where $|_{2k}$ is the slash operator, $\Gamma=\PSL_2(\Z)$, and $\Gamma_{\infty}$ is the stabilizer for the cusp infinity.
The usual Eisenstein series is $P_{2k,0}(\tau) = E_{2k}(\tau),$ and for negative integers $-h$, we have the weakly holomorphic
\begin{equation}\label{PP}
P_{2k,-h}(\tau)=q^{-h}+\sum_{n=1}^{\infty}c_{2k,-h}(n)q^n.
\end{equation}

For small $\nu,$ when there are no cusp forms, we obtain the following Eichler--Selberg relations.

\begin{theorem}\label{EasyEichlerSelberg} 
	If $\nu \in \{1, 2, 3, 4, 6\},$  then for every  positive integer $m$ the following are true.
	
	\noindent
	1) We have that
	\[
		\calG_{m,\nu}(\tau) = \sum_{\kappa \mid m} \sum_{0 < r \le \kappa} r^{2\nu+1} P_{2\nu+2, -r(\kappa-r)}(\tau).
	\]
	
	\noindent
	2) If $n$ is a positive integer, then we have
	\[
		\sum_{r \in \Z} p_{2\nu+2}(r,n) \bt_m(4n-r^2) = -2 \sum_{\kappa \mid m} \sum_{0 < r \le \kappa} r^{2\nu+1} c_{2\nu+2, -r(\kappa-r)}(n).
	\]
\end{theorem}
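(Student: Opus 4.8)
The plan is to exploit the fact that the relevant weights $2\nu+2$ for $\nu\in\{1,2,3,4,6\}$ are $4,6,8,10,14$, which are exactly the weights $\ge 4$ with $S_{2\nu+2}=\{0\}$; weight $12$, where $\Delta$ lives, is why $\nu=5$ is excluded. For such a weight one has $M_{2\nu+2}=\C E_{2\nu+2}$, and every $f=\sum_{n\gg-\infty}a(n)q^n\in M^!_{2\nu+2}$ is determined by its principal part together with its constant term: the form $f-\sum_{h\ge 1}a(-h)P_{2\nu+2,-h}$ is holomorphic, hence a scalar multiple of $E_{2\nu+2}$, and since each $P_{2\nu+2,-h}$ has vanishing constant term by \eqref{PP}, that scalar equals $a(0)$. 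Therefore, writing $a_{m,\nu}(n)$ for the coefficient of $q^n$ in $\calG_{m,\nu}(\tau)$ and using $E_{2\nu+2}=P_{2\nu+2,0}$,
\[
	\calG_{m,\nu}(\tau)=a_{m,\nu}(0)\,P_{2\nu+2,0}(\tau)+\sum_{h\ge 1}a_{m,\nu}(-h)\,P_{2\nu+2,-h}(\tau),
\]
which is legitimate because $\calG_{m,\nu}(\tau)\in M^!_{2\nu+2}$ by \cref{FundGF}. So part~1) reduces to computing $a_{m,\nu}(0)$ and $a_{m,\nu}(-h)$ for $h\ge 1$, and then checking that the resulting Poincar\'e-series expansion collapses to the claimed double sum.

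To compute these coefficients I would use the partial-fraction form of $p_{2k}$: factoring $1-rX+nX^2=(1-\alpha X)(1-\beta X)$ gives $p_{2k}(r,n)=(\alpha^{2k-1}-\beta^{2k-1})/(\alpha-\beta)$. For $n\le 0$ the values $\bt_m(4n-r^2)$ are governed entirely by \eqref{eq:tm-neg}, so only the terms with $4n-r^2=-\kappa^2$, $\kappa\mid m$, contribute (together with, when $n=0$, the term $r=0$ carrying $\bt_m(0)=2\sigma_1(m)$, which is killed by $p_{2\nu+2}(0,0)=0$ since $\nu\ge 1$). For such a term $r^2-4n=\kappa^2$, so $\{\alpha,\beta\}=\{(r+\kappa)/2,(r-\kappa)/2\}$ with $\alpha-\beta=\pm\kappa$; the congruence $r\equiv\kappa\pmod 2$ is automatic, so these are integers. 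Hence $\kappa\,p_{2\nu+2}(r,n)=((r+\kappa)/2)^{2\nu+1}-((r-\kappa)/2)^{2\nu+1}$. Putting $u=(r+\kappa)/2$, $v=(\kappa-r)/2$, so that $u+v=\kappa$ and $uv=-n$, and using that $2\nu+1$ is odd to rewrite the bracket as $u^{2\nu+1}+v^{2\nu+1}$, the involution $u\leftrightarrow v$ (which preserves the constraint) produces
\[
	a_{m,\nu}(-h)=\sum_{\kappa\mid m}\sum_{\substack{0<r<\kappa\\ r(\kappa-r)=h}}r^{2\nu+1}\quad(h\ge 1),\qquad a_{m,\nu}(0)=\sum_{\kappa\mid m}\kappa^{2\nu+1}.
\]
Feeding this back into the expansion of $\calG_{m,\nu}$ and regrouping by $\kappa$ — the missing $r=\kappa$ term of the inner sum being supplied by $a_{m,\nu}(0)P_{2\nu+2,0}=\sum_{\kappa\mid m}\kappa^{2\nu+1}P_{2\nu+2,0}$ — gives $\calG_{m,\nu}=\sum_{\kappa\mid m}\sum_{0<r\le\kappa}r^{2\nu+1}P_{2\nu+2,-r(\kappa-r)}$, which is part~1). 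Part~2) then follows at once by reading off the $q^n$-coefficient ($n\ge 1$) from each side of~1): this gives $-\tfrac12\sum_{r\in\Z}p_{2\nu+2}(r,n)\bt_m(4n-r^2)=\sum_{\kappa\mid m}\sum_{0<r\le\kappa}r^{2\nu+1}c_{2\nu+2,-r(\kappa-r)}(n)$ via \eqref{PP}, which is the asserted formula after clearing the factor $-\tfrac12$.

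The one genuinely delicate step is the principal-part bookkeeping: correctly enumerating the pairs $(r,\kappa)$ with $r(\kappa-r)=h$, respecting the parity that makes $(r\pm\kappa)/2$ integral, and avoiding a double count under $r\mapsto r(\kappa-r)$ when symmetrizing $r^{2\nu+1}+(\kappa-r)^{2\nu+1}$; the rest is formal once \cref{FundGF} provides the modularity. I would also stress where $\nu\ge 1$ is used: only in the constant-term computation, via $p_{2\nu+2}(0,0)=0$. For $\nu=0$ this term survives, and in any case $M_2=\{0\}$ forces the different shape of \cref{EasyEichlerSelberg_nu0}.
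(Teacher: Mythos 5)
Your argument is correct and follows essentially the same route as the paper: cancel the principal part and constant term of $\calG_{m,\nu}$ with the Poincar\'e series $P_{2\nu+2,-h}$ and $E_{2\nu+2}=P_{2\nu+2,0}$, use $S_{2\nu+2}=\{0\}$ for these weights, and then evaluate the non-positive coefficients via \eqref{eq:tm-neg} together with the identity $\kappa\,p_{2\nu+2}\bigl(r,\tfrac{r^2-\kappa^2}{4}\bigr)=\bigl(\tfrac{\kappa+r}{2}\bigr)^{2\nu+1}+\bigl(\tfrac{\kappa-r}{2}\bigr)^{2\nu+1}$, which is exactly the paper's \eqref{eq:pkr-exp-eq}. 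The only difference is cosmetic: you derive that identity from the partial-fraction form of $p_{2k}$ and carry out the $(u,v)$-bookkeeping explicitly, whereas the paper cites the identity directly.
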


\begin{remark*} The Poincar\'e series in \cref{EasyEichlerSelberg} are easily described in terms of the Eisenstein series 
$$
E_4(\tau)=1+240\sum_{n=1}^{\infty}\sigma_3(n)q^n \ \ \ {\text {\rm and}}\ \ \
E_6(\tau)=1-504\sum_{n=1}^{\infty} \sigma_5(n)q^n.
$$
For $k\in \{4, 6, 8, 10, 14\},$ we have
$$
 P_{k,-1}(\tau) = \begin{cases}
E_4(\tau)\cdot (j(\tau)-984) &{\text {\rm if $k=4$,}}\\
E_6(\tau)\cdot (j(\tau)-240) &{\text {\rm if $k=6$,}}\\
E_4^2(\tau)\cdot (j(\tau)-1224) &{\text {\rm if $k=8$,}}\\
E_4(\tau)E_6(\tau)\cdot (j(\tau)-480) &{\text {\rm if $k=10$,}}\\
E_4^2(\tau)E_6(\tau)\cdot (j(\tau)-720) &{\text {\rm if $k=14$.}}
\end{cases}
$$
Generalizing \eqref{jm}, for $m>1,$ we have the Hecke formula
$$
P_{k,-m}(\tau) = m^{-k+1} \cdot T_m P_{k,-1}(\tau).
$$
\end{remark*}

\begin{example*} For positive integers $n,$ \cref{EasyEichlerSelberg} with $\nu = 1$ and $m=1$ implies that
$$
\sum_{r\in \Z} r^2 \bt_1(4n-r^2)=-480\sigma_3(n).
$$
\end{example*}

Cusp forms arise in the general case. Special values of symmetrized shifted convolution $L$-functions and Petersson norms control these cusp forms in these Eichler--Selberg relations. Throughout, we let $d_{2k}$ denote the dimension of $S_{2k}$, the space of weight $2k$ cusp forms on $\SL_2(\Z).$

\begin{theorem}\label{Main-Theorem}
	If $\nu \geq 1$ and $m \ge 1$,
		then we have
	\begin{align*}
		\calG_{m,\nu}(\tau)=
		 &\sum_{\kappa \mid m} \sum_{0 < r \le \kappa} r^{2\nu+1} P_{2\nu+2, -r(\kappa-r)}(\tau)
		 -\sum_{j=1}^{d_{2\nu+2}} \left(24\sigma_1(m) - \frac{\Gamma(2\nu+1)}{(4\pi)^{2\nu+1}} \frac{\widehat{L}(f_j, m; 2\nu+1)}{\|f_j\|^2} \right) f_j,
	\end{align*}
	where the $f_j$'s are normalized Hecke eigenforms of $S_{2\nu+2}$ and 
	\[
		\widehat{L}(f, m; s) \coloneqq \sum_{n=1}^\infty \frac{c_f(n) c_f(n+m)}{n^s} - \sum_{n=1}^\infty \frac{c_f(n) c_f(n-m)}{n^s}.
	\]
\end{theorem}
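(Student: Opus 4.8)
The plan is to determine the weakly holomorphic modular form $\calG_{m,\nu}$ from a finite amount of data: its principal part, its constant term, and its Petersson inner products against a Hecke eigenbasis of $S_{2\nu+2}$. By \cref{FundGF} we already know $\calG_{m,\nu}\in M^!_{2\nu+2}$. First I would read off the principal part and constant term directly from \eqref{KeyGF} and \eqref{eq:tm-neg}; the only inputs are $\bt_m(0)=2\sigma_1(m)$ and $\bt_m(-\kappa^2)=-\kappa$ for $\kappa\mid m$. The key elementary identity is the factorization: if $n=-r(\kappa-r)$ and $\rho=|\kappa-2r|$, then $1-\rho X+nX^2=(1-(\kappa-r)X)(1+rX)$, so $p_{2\nu+2}(\rho,n)=\bigl((\kappa-r)^{2\nu+1}+r^{2\nu+1}\bigr)/\kappa$. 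Inserting this into \eqref{KeyGF} and bookkeeping over divisors $\kappa\mid m$ shows that the principal part of $\calG_{m,\nu}$ equals $\sum_{\kappa\mid m}\sum_{0<r<\kappa}r^{2\nu+1}q^{-r(\kappa-r)}$ and that its constant term equals $\sigma_{2\nu+1}(m)$. Since $P_{2\nu+2,-h}$ has principal part $q^{-h}$ and vanishing constant term for $h\ge 1$, while $P_{2\nu+2,0}=E_{2\nu+2}$ and $\sum_{\kappa\mid m}\kappa^{2\nu+1}=\sigma_{2\nu+1}(m)$, the form
\[
	G \coloneqq \calG_{m,\nu} - \sum_{\kappa\mid m}\sum_{0<r\le\kappa}r^{2\nu+1}P_{2\nu+2,-r(\kappa-r)}
\]
is holomorphic with vanishing constant term, hence $G\in S_{2\nu+2}$. (When $d_{2\nu+2}=0$ this already yields \cref{EasyEichlerSelberg}.)

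Next I would identify $G$ via the Petersson product. Writing $G=\sum_{j=1}^{d_{2\nu+2}}\beta_jf_j$, we have $\beta_j=\langle G,f_j\rangle/\|f_j\|^2$. Because the regularized Petersson product of a weakly holomorphic Poincar\'e series $P_{2\nu+2,-h}$ with a cusp form vanishes for every $h\ge 0$ (including $h=0$, i.e.\ $E_{2\nu+2}$) — unfolding against $\Gamma_\infty\backslash\Gamma$ reduces the inner integral to the Fourier coefficient $c_{f_j}(-h)$, which is $0$ — it follows that $\langle G,f_j\rangle=\langle\calG_{m,\nu},f_j\rangle^{\mathrm{reg}}$. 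So the theorem is equivalent to the evaluation
\[
	\langle\calG_{m,\nu},f_j\rangle^{\mathrm{reg}}
	= -24\,\sigma_1(m)\,\|f_j\|^2 + \frac{\Gamma(2\nu+1)}{(4\pi)^{2\nu+1}}\,\widehat L(f_j,m;2\nu+1).
\]

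The evaluation of this regularized pairing is the heart of the matter, and I expect it to be the main obstacle. The approach is to realize $\calG_{m,\nu}$ as a theta lift of the trace generating function: the numbers $\bt_m(d)$ are essentially the Fourier coefficients of a weight $3/2$ form $\Omega_m$ in the sense of Zagier, and the weighting $\sum_r p_{2\nu+2}(r,n)(\cdot)$ with $d=4n-r^2$ is precisely the effect of the Shintani theta kernel, so $\calG_{m,\nu}$ is (the holomorphic part of) the Shintani lift of $\Omega_m$. A seesaw identity then turns $\langle\calG_{m,\nu},f_j\rangle^{\mathrm{reg}}$ into the regularized pairing of $\Omega_m$ against the Shimura--Kohnen lift of $f_j$, and a Rankin--Selberg unfolding of the latter produces the symmetrized shifted convolution series $\sum_{n\ge 1}c_{f_j}(n)c_{f_j}(n+m)n^{-(2\nu+1)}-\sum_{n\ge 1}c_{f_j}(n)c_{f_j}(n-m)n^{-(2\nu+1)}=\widehat L(f_j,m;2\nu+1)$, together with the archimedean factor $\Gamma(2\nu+1)/(4\pi)^{2\nu+1}$ coming from integrals of the type $\int_0^\infty y^{2\nu}e^{-4\pi n y}\,dy/y$. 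The remaining term $-24\sigma_1(m)\|f_j\|^2$ is a boundary contribution of the regularization — equivalently, it reflects the non-modularity of $E_2$, whose anomaly supplies the constant $24$ — exactly as the term $-\lambda_{2k-1}(n)$ enters the classical formula \eqref{ESTF}. An alternative, theta-free route would be to apply a higher holomorphic projection directly to the completed non-holomorphic trace generating function, tracking the non-holomorphic corrections; the shifted convolution and the $\sigma_1(m)$ term arise in the same way. In either approach the delicate point is to carry out the unfolding with the correct normalizations and, in particular, to isolate the $24\sigma_1(m)$ contribution.

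Finally, dividing by $\|f_j\|^2$ gives $\beta_j=-24\sigma_1(m)+\frac{\Gamma(2\nu+1)}{(4\pi)^{2\nu+1}}\frac{\widehat L(f_j,m;2\nu+1)}{\|f_j\|^2}$, so $G=-\sum_{j=1}^{d_{2\nu+2}}\bigl(24\sigma_1(m)-\frac{\Gamma(2\nu+1)}{(4\pi)^{2\nu+1}}\frac{\widehat L(f_j,m;2\nu+1)}{\|f_j\|^2}\bigr)f_j$, and adding back the Poincar\'e--Eisenstein part $\sum_{\kappa\mid m}\sum_{0<r\le\kappa}r^{2\nu+1}P_{2\nu+2,-r(\kappa-r)}$ recovers the stated formula.
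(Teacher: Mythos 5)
Your reduction is sound as far as it goes, and it essentially reproduces the paper's own bookkeeping: from \eqref{KeyGF}, \eqref{eq:tm-neg} and the factorization $p_{2\nu+2}(\kappa-2r,-r(\kappa-r))=\bigl((\kappa-r)^{2\nu+1}+r^{2\nu+1}\bigr)/\kappa$ (this is \eqref{eq:pkr-exp-eq}) one does get principal part $\sum_{\kappa\mid m}\sum_{0<r<\kappa}r^{2\nu+1}q^{-r(\kappa-r)}$ and constant term $\sigma_{2\nu+1}(m)$, so subtracting the Poincar\'e--Eisenstein combination leaves a cusp form, exactly as in \eqref{def:Gmnu}. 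The genuine gap is the step you yourself flag as the main obstacle: the evaluation $\langle\calG_{m,\nu},f_j\rangle^{\mathrm{reg}}=-24\,\sigma_1(m)\|f_j\|^2+\frac{\Gamma(2\nu+1)}{(4\pi)^{2\nu+1}}\widehat{L}(f_j,m;2\nu+1)$ \emph{is} the theorem, and your proposal only asserts it. The seesaw you sketch cannot work as stated: $g_m$ has weight $3/2$, while the Shintani/Shimura--Kohnen correspondent of $f_j\in S_{2\nu+2}$ lives in weight $\nu+3/2$, so for $\nu\ge 1$ the two members of your proposed pairing have mismatched weights; the object that actually pairs against $g_m$ is the adjoint of $h\mapsto[h,\theta]_\nu|U_4$ applied to $f_j$, whose Fourier coefficients are precisely the shifted-convolution quantities one is trying to compute, so no unfolding has been avoided. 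Likewise, attributing the $-24\sigma_1(m)\|f_j\|^2$ term to an ``$E_2$ anomaly''/holomorphic-projection boundary contribution is off target: for $m\ge 1$ the form $g_m$ in \eqref{gm} is weakly holomorphic, no nonholomorphic completion or projection correction enters (that is the content of \cref{FundGF}), and no derivation of the constant $24$ is offered.

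For comparison, the paper regularizes not through an abstract regularized Petersson product but through a spectral parameter: $g_m$ is realized as the $s\to 3/4$ value of vector-valued Maass--Poincar\'e series (\cref{Thm:construct:Gm}); the Rankin--Cohen bracket is transported through the Jacobi theta decomposition and the weighted heat operator, giving \eqref{eq:RC-heat}, which writes $\calG_{m,\nu}$ as a limit of Selberg-type Poincar\'e series of weight $2\nu+2$; for $\Re(s)$ large every inner product converges and unfolds honestly. The Eisenstein ($n=0$) constituent, i.e.\ the constant term $2\sigma_1(m)$ of $g_m$, unfolds to the symmetric-square series $\sum_{r\ge1}c_{f_j}(r^2)r^{-2s-2\nu-1/2}$, whose value at the edge yields exactly $24\|f_j\|^2$ (\cref{thm:phinu-f-Pet}) --- this is where the $24$ actually comes from; the $r^2>n^2$ range of the $n>0$ constituents produces the coefficients $c_{f_j}\bigl(\tfrac{r^2-n^2}{4}\bigr)=c_{f_j}(m(m+n))$, which Hecke multiplicativity and M\"obius inversion convert into $\widehat{L}(f_j,m;2\nu+1)$ with the factor $\Gamma(2\nu+1)/(4\pi)^{2\nu+1}$ (\cref{thm:phi+-f-Peter}); and the $r^2\le n^2$ range reproduces your Poincar\'e-series part, as in \eqref{eq:minus-part}. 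Your write-up contains neither this computation nor a rigorous substitute (a regularized theta-lift/seesaw argument with matching weights and a worked-out unfolding), so the determination of the cuspidal coefficients $\beta_j$ --- the heart of the statement --- remains unproven.
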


\begin{example*} Example 1 of ~\cite{MertensOno2016} gives $\widehat{L}(\Delta, 1; 11)=-33.383\dots$ and $\widehat{L}(\Delta, 2; 11)=266.439\dots,$ which arise in \cref{Main-Theorem} when $\nu = 5$ and $m\in \{1, 2\}.$  By brute force computation, we have
\begin{align*}
		\calG_{1,5}(\tau) &=  E_{12}(\tau) -\frac{82104}{691} \Delta(\tau),\\
		\calG_{2,5}(\tau) &= P_{12,-1}(\tau) + 2049 E_{12}(\tau) - \left(\alpha - \frac{1746612}{691} \right) \Delta(\tau),
 	\end{align*}
	where
	\begin{align*}
		P_{12,-1}(\tau) &= \Delta(\tau) (j_2(\tau) + 24 j_1(\tau) + 324 + \alpha) = q^{-1} + \alpha q + \cdots,
	\end{align*}
	with $\alpha = 1842.894\ldots$.
Using $\|\Delta\|^2 = \langle \Delta, \Delta \rangle = 0.0000010353\ldots,$ these numerics illustrate \cref{Main-Theorem}
	\begin{align*}
		\frac{82104}{691} &= 24 + \frac{65520}{691} = 24 - \frac{\Gamma(11)}{(4\pi)^{11}} \frac{(-33.383\ldots)}{\|\Delta\|^2},\\
		\alpha - \frac{1746612}{691} &= 24 \cdot 3 - \frac{\Gamma(11)}{(4\pi)^{11}} \frac{(266.439\ldots)}{\|\Delta\|^2}.
	\end{align*}
	\end{example*}

\cref{Main-Theorem} gives a doubly infinite family of modified Eichler--Selberg trace formulas, where Hecke eigenvalues are weighted by shifted convolution $L$-values, and where traces of singular moduli $\bt_m(4n-r^2)$ replace the Hurwitz--Kronecker class numbers $\bt_0(4n-r^2)=H(4n-r^2).$  
To make this precise, we let
\begin{equation}\label{ModifiedTrace}
\mathrm{Tr}_m(n;2k) \coloneqq \frac{\Gamma(2k-1)}{(4\pi)^{2k-1}} \sum_{j=1}^{d_{2k}} \frac{\widehat{L}(f_j, m; 2k-1)}{\|f_j\|^2} \cdot c_{f_j}(n),
\end{equation}
where, as above,  $c_{f_j}(n)$ is the eigenvalue of $T_n$ for the Hecke eigenform $f_j\in S_{2k}.$

\begin{corollary}\label{GeneralizedES} If $2k \in 2\Z^+\setminus\{2, 4, 6, 8, 10, 14\}$ and $m$ is a positive integer, then we have
$$
 \mathrm{Tr}(n; 2k) =\frac{1}{ 24 \sigma_1(m)}\cdot\left( \mathrm{Tr}_m(n; 2k) +\frac{1}{2} \sum_{r \in \Z} p_{2k}(r,n) \bt_m(4n-r^2) + \sum_{\kappa \mid m} \sum_{0 < r \le \kappa} r^{2k-1} c_{2k, -r(\kappa-r)}(n)\right).
$$
\end{corollary}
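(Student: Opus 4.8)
The plan is to obtain \cref{GeneralizedES} as a direct consequence of \cref{Main-Theorem}, by comparing Fourier coefficients and then recognizing the cusp-form contributions as the two trace quantities $\mathrm{Tr}(n;2k)$ and $\mathrm{Tr}_m(n;2k)$. Write $2k=2\nu+2$. The hypothesis $2k\in 2\Z^+\setminus\{2,4,6,8,10,14\}$ is precisely the condition $d_{2k}=\dim S_{2k}\ge 1$; in particular $2k\ge 12$, so $\nu=k-1\ge 1$ and \cref{Main-Theorem} applies with all sums over Hecke eigenforms nonempty.

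First I would extract the coefficient of $q^n$ from each side of \cref{Main-Theorem}, for a fixed positive integer $n$. On the left, the definition \eqref{KeyGF} of $\calG_{m,\nu}$ gives the coefficient $-\tfrac12\sum_{r\in\Z}p_{2k}(r,n)\,\bt_m(4n-r^2)$. On the right, since $n\ge 1$ the principal parts of the weakly holomorphic Poincar\'e series play no role: by \eqref{PP} the $q^n$-coefficient of $P_{2k,-r(\kappa-r)}(\tau)$ is $c_{2k,-r(\kappa-r)}(n)$ when $0<r<\kappa$, while for $r=\kappa$ the term $P_{2k,0}=E_{2k}$ contributes its $n$-th Fourier coefficient, written $c_{2k,0}(n)$ under the same convention; and the $q^n$-coefficient of each normalized Hecke eigenform $f_j\in S_{2k}$ is its $T_n$-eigenvalue $c_{f_j}(n)$.

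Next I would assemble the cusp-form part. Since $f_1,\dots,f_{d_{2k}}$ is a Hecke eigenbasis of $S_{2k}$, the operator $T_n$ is diagonal in this basis with eigenvalues $c_{f_j}(n)$, whence $\sum_{j=1}^{d_{2k}}c_{f_j}(n)=\mathrm{Tr}(n;2k)$ --- the very quantity appearing in the Eichler--Selberg trace formula \eqref{ESTF} --- and, by the definition \eqref{ModifiedTrace}, $\frac{\Gamma(2k-1)}{(4\pi)^{2k-1}}\sum_{j=1}^{d_{2k}}\frac{\widehat L(f_j,m;2k-1)}{\|f_j\|^2}\,c_{f_j}(n)=\mathrm{Tr}_m(n;2k)$. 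Thus the $q^n$-coefficient of the right-hand side of \cref{Main-Theorem} equals
$$
\sum_{\kappa\mid m}\sum_{0<r\le\kappa}r^{2k-1}c_{2k,-r(\kappa-r)}(n)\;-\;24\sigma_1(m)\,\mathrm{Tr}(n;2k)\;+\;\mathrm{Tr}_m(n;2k),
$$
and equating this with $-\tfrac12\sum_{r\in\Z}p_{2k}(r,n)\,\bt_m(4n-r^2)$ and solving for $\mathrm{Tr}(n;2k)$ (legitimate since $24\sigma_1(m)\ne 0$) yields exactly the claimed identity.

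I do not expect a genuine obstacle: \cref{GeneralizedES} merely repackages \cref{Main-Theorem}, so the whole mathematical content sits in that theorem. The only points requiring care are bookkeeping --- keeping the $r=\kappa$ Eisenstein term consistent with the notation of \eqref{PP}, and invoking the standard level-one facts that $S_{2k}$ admits a Hecke eigenbasis and that the trace of $T_n$ equals the sum of its eigenvalues, so that $\sum_j c_{f_j}(n)$ is indeed the $\mathrm{Tr}(n;2k)$ of \eqref{ESTF}. One may also note that the same coefficient comparison, applied to the excluded weights $2k\in\{4,6,8,10,14\}$ where $d_{2k}=0$, recovers the second statement of \cref{EasyEichlerSelberg}.
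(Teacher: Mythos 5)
Your proposal is correct and is exactly how the corollary follows in the paper: it is the immediate consequence of \cref{Main-Theorem} obtained by comparing the $q^n$-coefficients via \eqref{KeyGF}, recognizing $\sum_j c_{f_j}(n)=\mathrm{Tr}(n;2k)$ and the weighted sum as $\mathrm{Tr}_m(n;2k)$ from \eqref{ModifiedTrace}, and dividing by $24\sigma_1(m)$. The bookkeeping points you flag (the $r=\kappa$ Eisenstein coefficient convention and $d_{2k}\ge 1$ precisely for the listed weights) are the right ones and are handled correctly.
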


To obtain these results, we adapt Zagier's novel (unpublished) proof \cite{Zagier1991} of the Eichler--Selberg trace formula. In \cref{NovelZagier} we recall his proof and his work on traces of singular moduli, and we prove Theorems~\ref{FundGF}-\ref{EasyEichlerSelberg}.
The proof of \cref{Main-Theorem} is more involved, as we make use of the theory of vector-valued Poincar\'e series, the arithmetic of half-integral weight Kloosterman sums, Rankin--Cohen bracket operators,  and symmetrized shifted convolution $L$-functions. In \cref{Section3} we recall important formalities regarding vector-valued modular forms that transform according to the Weil representation.  In \cref{Section4} we relate the Fourier coefficients of half-integral weight Maass--Poincar\'e series to traces of singular moduli, and finally in \cref{Section5} we assemble these facts to prove \cref{Main-Theorem}.

\section*{Acknowledgments}
This work was supported by the MEXT Initiative for Realizing Diversity in the Research Environment through Kyushu University's Diversity and Super Global Training Program for Female and Young Faculty (SENTAN-Q). The second author was supported by JSPS KAKENHI (JP20K14292, JP21K18141, and JP24K16901). The third author thanks the Thomas Jefferson Fund and grants from the NSF
	(DMS-2002265 and DMS-2055118).

\section{Zagier's work and the proofs of Theorems~\ref{FundGF}-\ref{EasyEichlerSelberg}}\label{NovelZagier}

In unpublished notes~\cite{Zagier1991}, Zagier gave a novel proof of the Eichler--Selberg trace formula using harmonic Maass forms  (see \cite{BruinierFunke2004} or \cite{BFOR2017} for background on harmonic Maass forms).  Saad and the third author ~\cite{OnoSaad2022} obtained further such formulas by modifying his argument.
We adapt his argument in a different aspect.

\subsection{Zagier's Proof}
We begin by sketching his proof, which relies on the following theorem.

\begin{theorem*}[Zagier \cite{Zagier1975}] We have that
$$
		\mathcal{H}(\tau) \coloneqq -\frac{1}{12} + \sum_{\substack{d > 0 \\ d \equiv 0, 3\ (\rm{mod}\ 4)}} H(d) q^d + \frac{1}{8\pi \sqrt{v}} + \frac{1}{4\sqrt{\pi}} \sum_{n=1}^\infty n \Gamma \left(-\frac{1}{2}; 4\pi n^2 v\right) q^{-n^2}
$$
	is a harmonic Maass form of weight $3/2$ on $\Gamma_0(4)$, where $\tau = u + iv$ and $\Gamma(s; x)$ is the incomplete Gamma function.
Its {\it holomorphic part} is the Fourier series
\[
	\mathcal{H}^+(\tau) \coloneqq -\frac{1}{12} + \sum_{\substack{d > 0 \\ d \equiv 0, 3\ (\rm{mod}\ 4)}} H(d) q^d.
\]
\end{theorem*}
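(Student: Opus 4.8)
The plan is to realize $\mathcal{H}$ as the value at a distinguished point of an analytically continued real-analytic Eisenstein series of weight $3/2$ on $\Gamma_0(4)$, in the spirit of Zagier's original argument. Let $\nu$ be the multiplier system of $\theta(\tau)^3$, where $\theta(\tau)=\sum_{n\in\Z}q^{n^2}$, and for $\tau=u+iv\in\bbH$ and $s\in\C$ with $\Re(s)$ sufficiently large set
\[
  \mathcal{E}(\tau,s) \coloneqq \sum_{\gamma=\smat{a&b\\c&d}\in\Gamma_\infty\backslash\Gamma_0(4)} \overline{\nu(\gamma)}\,(c\tau+d)^{-3/2}\left(\frac{v}{|c\tau+d|^2}\right)^{s}.
\]
This converges absolutely for $\Re(s)>1/4$, transforms with weight $3/2$ and multiplier $\nu$ under $\Gamma_0(4)$, and is an eigenfunction of the weight-$3/2$ Laplacian $\Delta_{3/2}$ whose eigenvalue is a fixed quadratic polynomial in $s$; at the distinguished value $s_0$ at which this eigenvalue vanishes, a regular value $\mathcal{E}(\tau,s_0)$ is automatically annihilated by $\Delta_{3/2}$, hence harmonic, and it has at most polynomial growth at each of the cusps $\infty,0,1/2$ of $\Gamma_0(4)$ because the Eisenstein summation does. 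Here $s_0$ lies on the boundary of (or outside) the half-plane of absolute convergence --- the half-integral-weight avatar of the weight-$2$ ``Hecke range'' --- so the construction genuinely requires meromorphic continuation and one must rule out a pole at $s_0$.

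The heart of the matter is the explicit Fourier expansion of $\mathcal{E}(\tau,s)$. Separating the identity coset and applying Poisson summation in the variable $u$ expresses the $n$-th Fourier coefficient as a Whittaker (confluent hypergeometric) function of $4\pi n v$ times an arithmetic Dirichlet series: for $n=0$ a ratio of Riemann zeta values, and for $n\neq 0$ essentially $L(\cdot,\chi_D)/\zeta(\cdot)$, where $\chi_D$ is the Kronecker character attached to the discriminant part of $-n$, corrected by a finite Euler factor at $2$. One then continues in $s$ to $s_0$; carrying this out shows that the potential pole is absent and --- crucially --- that not all Whittaker functions degenerate to holomorphic exponentials in the limit: the $n>0$ terms become $q^n$, but the $n=0$ term retains the piece $\frac{1}{8\pi\sqrt{v}}$ alongside a constant, and the $n<0$ terms (which are supported on negative squares $-\kappa^2$) become $\frac{1}{4\sqrt{\pi}}\kappa\,\Gamma(-\tfrac{1}{2};4\pi\kappa^2 v)q^{-\kappa^2}$. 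This produces exactly the non-holomorphic completion in the statement and pins down the holomorphic part as $\mathcal{H}^+$.

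It remains to match the arithmetic. At $s=s_0$ the constant Dirichlet series collapses to $\zeta(-1)=-\tfrac{1}{12}$. For $n=d>0$ with $d\equiv 0,3\pmod 4$, write $-d=Df^2$ with $D$ a negative fundamental discriminant; Dirichlet's class number formula gives $L(1,\chi_D)=\frac{\pi H(D)}{\sqrt{|D|}}$ with $H(D)=2h(D)/w(D)$, and combining this with the finite Euler factor produced by the Fourier coefficient reproduces the Hurwitz relation $H(d)=\frac{\sqrt{|D|}}{\pi}\,L(1,\chi_D)\sum_{e\mid f}\mu(e)\chi_D(e)\sigma_1(f/e)$, so that the $d$-th coefficient of $\mathcal{E}(\tau,s_0)$ is precisely $H(d)$. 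Together with the harmonicity and moderate growth from the first paragraph, this identifies $\mathcal{E}(\tau,s_0)$ with $\mathcal{H}$ and proves the theorem.

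I expect the main obstacle to be the borderline analytic continuation: establishing the regularity of $\mathcal{E}(\tau,s)$ at $s_0$ and tracking precisely which non-holomorphic contributions persist in the limit are exactly the places where the argument cannot be reduced to the absolutely convergent (higher-weight) case. An alternative route that sidesteps the Eisenstein machinery is to verify modularity directly: compute, via Poisson summation, the failure of the naive series $-\tfrac{1}{12}+\sum_{d>0}H(d)q^d$ to be weight-$3/2$ modular on $\Gamma_0(4)$, and check that this obstruction is cancelled by the non-holomorphic part of the completed theta function, whose period integral supplies exactly the $\Gamma(-\tfrac{1}{2};\,\cdot\,)$ terms.
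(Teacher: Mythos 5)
The paper does not prove this statement; it is quoted as Zagier's theorem with a citation to \cite{Zagier1975}, and your proposal is essentially Zagier's original argument: Hecke-trick analytic continuation of the weight-$3/2$ real-analytic Eisenstein series on $\Gamma_0(4)$ with the $\theta^3$ multiplier, Fourier expansion via Poisson summation, identification of the positive coefficients with $H(d)$ through Dirichlet's class number formula and the Hurwitz relation for $-d = Df^2$, the constant $\zeta(-1)=-\tfrac{1}{12}$, and the surviving non-holomorphic Whittaker terms at $n=0$ and $n=-\kappa^2$ giving the $\tfrac{1}{8\pi\sqrt{v}}$ and incomplete-Gamma completion. So the approach matches the cited proof and is correct in outline, with only normalization details (the precise harmonic value $s_0$ and the multiplier bookkeeping) left to be carried out.
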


Zagier uses a sequence of modular forms he constructs from $\mathcal{H}(\tau)$ and Jacobi's weight 1/2 theta function 
\begin{equation}\label{Jacobi}
\theta(\tau) \coloneqq \sum_{n \in \Z} q^{n^2}=1+2q+2q^4+\cdots.
\end{equation}
To define these modular forms, he requires Atkin's $U$-operator defined by
\begin{equation}\label{U}
		(f| U_m)(\tau) \coloneqq \frac{1}{m} \sum_{j=0}^{m-1} f \left(\frac{\tau+j}{m}\right),
\end{equation}
and the Rankin--Cohen bracket differential operators.
For modular forms $f$ and $g$ (possibly non-holomorphic), with weights $k$ and $l$ respectively, these operators are defined by
	\begin{equation}\label{RC}
		[f,g]_\nu \coloneqq \sum_{\substack{r,s \ge 0 \\ r+s = \nu}} (-1)^r \frac{\Gamma(k+\nu) \Gamma(l+\nu)}{s! \Gamma(k+r) r! \Gamma(l+s)} D^r(f) D^s(g),
	\end{equation}
	where $D = \frac{1}{2\pi i}\frac{\dd}{\dd \tau} = q \frac{\dd}{\dd q}$. 
 These functions are weight $2\nu+k+l$ (possibly non-holomorphic) modular forms, which one can project to obtain a holomorphic modular form via an integral map $\pi_\mathrm{hol}.$
	
	Zagier studies the resulting sequence of modular forms $\pi_\mathrm{hol}([\mathcal{H}, \theta]_\nu|U_4)$,  where $\nu \geq 1$. He computes them in two ways. The first method is combinatorial, and it  uses the identity (for example, see ~\cite{Mertens2014})
\[
	\pi_\mathrm{hol}([\mathcal{H}, \theta]_\nu|U_4) = [\mathcal{H}^+, \theta]_\nu|U_4 + 2 \binom{2\nu}{\nu} \sum_{n=1}^\infty \lambda_{2\nu+1}(n) q^n.
\]
A straightforward brute force calculation with \eqref{RC} gives
\begin{align}\label{eq:RC-calc}
	[\mathcal{H}^+, \theta]_\nu |U_4 = \binom{2\nu}{\nu} \sum_{n=0}^\infty \left(\sum_{r \in \Z} p_{2\nu+2}(r,n) H(4n-r^2) \right) q^n.
\end{align}
Therefore, the $n$th coefficient of $\pi_\mathrm{hol}([\mathcal{H}, \theta]_\nu|U_4)$ is
\begin{align}\label{ES-LHS}
	\binom{2\nu}{\nu} \left(\sum_{r \in \Z} p_{2\nu+2}(r,n) H(4n-r^2) + 2\lambda_{2\nu+1}(n) \right).
\end{align}

As an alternate calculation, Zagier combines (for example, see ~\cite[Theorem 5.5]{EichlerZagier1985}) the Rankin--Cohen bracket operators with Hecke--Petersson theory.
As each $\pi_\mathrm{hol}([\mathcal{H}, \theta]_\nu |U_4)$ is a cusp form, we have
$$
	\pi_\mathrm{hol}([\mathcal{H}, \theta]_\nu |U_4) = \sum_{j=1}^{d_{2\nu+2}} a_j f_j,
$$	
where the $f_j$'s form a basis of Hecke eigenforms for $S_{2\nu+2}$.  In particular, we have $T_n f_j = c_{f_j}(n) f_j,$ 
where 
$$f_j(\tau)=q+\sum_{n\geq 2} c_{f_j}(n)q^n.$$

To compute the $a_j,$ he expresses $\mathcal{H}(\tau)$ in terms of Eisenstein series (see~\cite[Section 2.2]{OnoSaad2022} or \cite[Ch.~2]{HirzebruchZagier1976}), which allows him to use the method of unfolding and the Rankin--Selberg method to derive the Petersson inner product identity (for example, see~\cite[Ch.~6.3]{BFOR2017})
$$
	a_j \langle f_j, f_j \rangle = \langle \pi_\mathrm{hol}([\mathcal{H}, \theta]_\nu |U_4), f_j \rangle
		= -2 \binom{2\nu}{\nu} \langle f_j, f_j \rangle.
$$
For each $j$, this gives $a_j=-2\binom{2\nu}{\nu}.$	
Therefore, the
 $n$th coefficient of $\pi_\mathrm{hol}([\mathcal{H}, \theta]_\nu |U_4)$ is
$-2 \binom{2\nu}{\nu}\cdot  \mathrm{Tr}(n; 2\nu+2),$ which when equated with \eqref{ES-LHS} gives
the Eichler--Selberg trace formula.

\subsection{Proofs of Theorems 1.1-1.3}

Zagier's proof begins with the fact that $\mathcal{H}^+(\tau)$ is the holomorphic part of a weight 3/2 harmonic Maass form. 
In 2002, Zagier~\cite{Zagier2002} greatly generalized this fact.

\begin{theorem*}[Theorem 5 of \cite{Zagier2002}]\label{thm:Zagier-gm} For positive integers $m$, we have that
	\begin{equation}\label{gm}
		g_m(\tau)  \coloneqq  -\sum_{\kappa \mid m} \kappa q^{-\kappa^2} + 2\sigma_1(m) + \sum_{\substack{d > 0 \\ d \equiv 0, 3\ (\rm{mod}\ 4)}} \bt_m(d) q^d
	\end{equation}
	is a weakly holomorphic modular form of weight $3/2$ on $\Gamma_0(4)$.
	\end{theorem*}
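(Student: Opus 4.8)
The plan is to realize $g_m(\tau)$ as a regularized theta lift of the weakly holomorphic modular function $j_m(\tau)=q^{-m}+O(q)$, so that the weight, level, and modularity come for free and the only real work is the computation of Fourier coefficients. Let $\Theta(\tau,z)$ be the Kudla--Millson-type theta kernel attached to the signature $(1,2)$ lattice of integral binary quadratic forms: it is a (non-holomorphic) modular form of weight $3/2$ in $\tau$ on $\Gamma_0(4)$, valued in the Kohnen plus space because of the congruence conditions $-d\equiv 0,1\pmod 4$ on discriminants, and $\SL_2(\Z)$-invariant of weight $0$ in $z$. Set
\[
	\widetilde g_m(\tau) \coloneqq \int_{\Gamma\backslash\bbH}^{\mathrm{reg}} j_m(z)\,\Theta(\tau,z)\,\frac{\dd u\,\dd v}{v^2},
\]
a Borcherds-regularized integral. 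Since the only singularity of $j_m$ on $\Gamma\backslash\bbH$ is its exponential growth at the single cusp, the regularization converges and produces a weakly holomorphic modular form of weight $3/2$ on $\Gamma_0(4)$ in the plus space; this is the ``modular trace function'' formalism of Bruinier--Funke. It then remains to show $\widetilde g_m=g_m$, i.e.\ to read off the Fourier expansion of $\widetilde g_m$.

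The positive-index coefficients come from unfolding: writing $\Theta$ as a sum over $\Gamma$-orbits of quadratic forms and integrating term by term identifies the coefficient of $q^d$ for $d>0$ with the sum of CM values $\sum_{Q\in\mathcal{Q}_d/\Gamma} j_m(\alpha_Q)/\#\Gamma_Q=\bt_m(d)$, exactly the orbit-by-orbit computation that is by now standard for such lifts. The non-positive-index coefficients come from the boundary term of the regularization near the cusp of $\SL_2(\Z)$: feeding the principal part $q^{-m}$ of $j_m$ into the Fourier expansion of $\Theta$ along that cusp produces a ``dual'' principal part, and the arithmetic of half-integral weight Gauss/Kloosterman sums forces the exponents to be the square divisors $\kappa^2$ for $\kappa\mid m$ with weights $\kappa$, giving $-\sum_{\kappa\mid m}\kappa q^{-\kappa^2}$, together with the constant term $2\sigma_1(m)$.

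A second, more elementary route bootstraps from $m=1$. Kohnen's isomorphism gives $M^{+}_{3/2}(\Gamma_0(4))\cong M_2(\SL_2(\Z))=0$, so a weight $3/2$ weakly holomorphic plus form is completely determined by its principal part; hence it suffices to exhibit $g_1$ with principal part $-q^{-1}$ (classical, e.g.\ from explicit bases of weakly holomorphic plus forms) and then to check that $g_m=g_1\mid T_{m^2}$ for the half-integral weight Hecke operator $T_{m^2}$, which is forced once one knows its action $q^{-1}\mapsto -\sum_{\kappa\mid m}\kappa q^{-\kappa^2}$ on principal parts and matches the identity $j_m=mT_m(j_1)$ from \eqref{jm}; the constant term then falls out of the same Hecke computation as $2\sigma_1(m)$. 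In either approach the genuinely new input --- and the main obstacle --- is the identification of the coefficients with traces of singular moduli: modularity and the Hecke propagation are essentially formal once the generating function is set up correctly, whereas matching the trace values, and in particular nailing the exact constants in the principal part and the constant term, requires careful control of the divergent part of the theta integral (equivalently, of the half-integral weight Gauss sums) even in the base case $m=1$.
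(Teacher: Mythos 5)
The paper does not prove this statement at all: it is quoted verbatim as Theorem 5 of Zagier's \emph{Traces of singular moduli} \cite{Zagier2002} and used as a black box, so there is no internal proof to compare with. Judged on its own terms, your first route (the regularized theta lift of $j_m$ against a Kudla--Millson kernel for the signature $(1,2)$ lattice of binary forms) is a genuine and known way to prove the theorem --- it is essentially Bruinier and Funke's later re-derivation of Zagier's result --- and the unfolding step correctly produces $\bt_m(d)$ for $d>0$. But the two points you label as formal are exactly where the content lies, and your sketch gets their mechanism partly wrong: the lift of a weight-$0$ input is in general only a \emph{harmonic} Maass form (the input $j_0=1$ yields $\mathcal{H}$, which is not weakly holomorphic), so you must use that $j_m$ has vanishing constant term to kill the non-holomorphic part; and the terms $-\sum_{\kappa\mid m}\kappa q^{-\kappa^2}$ and $2\sigma_1(m)$ do not come from ``half-integral weight Gauss/Kloosterman sums'' but from an elementary count over split (positive-norm) vectors, i.e.\ infinite geodesics, together with the regularized average value of $j_m$. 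These computations are asserted, not carried out.

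Your second route has a genuine gap as stated. Plus-space uniqueness (and note Kohnen's isomorphism is usually stated for weight $\ge 5/2$; at weight $3/2$ one should simply check directly that $M_{3/2}(\Gamma_0(4))=\C\,\theta^3$ has trivial intersection with the plus space) only tells you that $g_1|T_{m^2}$ is the unique weakly holomorphic plus form with principal part $-\sum_{\kappa\mid m}\kappa q^{-\kappa^2}$. It does \emph{not} follow that its positive coefficients are the traces $\bt_m(d)$, because the modularity of the trace generating function is precisely what is being proved --- you cannot invoke uniqueness to compare it with $g_1|T_{m^2}$ until you already know it lies in $M^{!,+}_{3/2}$. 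Even the base case is not ``classical from explicit bases'': exhibiting a form with principal part $-q^{-1}$ is easy, but identifying its coefficients with $\bt_1(d)$ is Zagier's Theorem 1, a nontrivial statement requiring its own argument, and the claimed intertwining of $T_m$ on $j_1$ (via $j_m=mT_m j_1$) with the half-integral weight operator $T_{m^2}$ on traces over Heegner points needs an actual isogeny/quadratic-form counting argument, not just a match of principal parts. So route 1 is a valid outline of a complete proof modulo the omitted regularization computations, while route 2, as written, is circular at its key step.
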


\begin{proof}[Proof of \cref{FundGF}]

Emulating Zagier's proof of the Eichler--Selberg trace formula, we replace $\mathcal{H}^+(\tau)$ in \eqref{eq:RC-calc} with the $g_m(\tau)$. Namely, we define
 $$
\calG_{m,\nu}(\tau) \coloneqq -\frac{1}{2\binom{2\nu}{\nu}} \cdot [g_m, \theta]_\nu |U_4.
 $$
By the combinatorial calculation that gave \eqref{eq:RC-calc}, we obtain the earlier definition \eqref{KeyGF}
$$
	\calG_{m,\nu}(\tau) = -\frac{1}{2} \sum_{n \gg -\infty} \sum_{r \in \Z} p_{2\nu+2}(r,n) \bt_m(4n-r^2) q^n.
$$
Furthermore, the theory of Rankin--Cohen brackets in this setting (see ~\cite[Theorem 5.5]{EichlerZagier1985}) implies that $\calG_{m,\nu}(\tau)$ is a weakly holomorphic modular form in $M_{2\nu+2}^!.$
\end{proof}

\begin{proof}[Proof of \cref{EasyEichlerSelberg_nu0}]
The space of weight 2 holomorphic modular forms is $M_2 = \{0\}$ and 
$$Dj_{-n}(\tau) = n q^n + O(q) \in M_2^!.
$$
Therefore, we have
$$
	\calG_{m,0}(\tau) +\frac{1}{2} \sum_{-\frac{m^2}{4} \le n < 0} \frac{1}{n} \left(\sum_{r \in \Z} \bt_m(4n-r^2)\right) Dj_{-n}(\tau) = 0.
$$
The first claim follows from \eqref{eq:tm-neg}. By comparing the $n$th coefficients, the second claim is obtained.
\end{proof}

\begin{proof}[Proof of~\cref{EasyEichlerSelberg}]
For $\nu > 0$, we note that
\begin{align}\label{def:Gmnu}
	\calG_{m,\nu}(\tau) + \frac{1}{2} \sum_{-\frac{m^2}{4} \le n \le 0} \sum_{r \in \Z} p_{2\nu+2}(r,n) \bt_m(4n-r^2) P_{2\nu+2, n}(\tau)
\end{align}
is a  cusp form. We are merely cancelling the poles at infinity with Poincar\'e series that satisfy
\eqref{PP}, and we capture the constant term with Eisenstein series $P_{2\nu+2,0}(\tau)=E_{2\nu+2}(\tau)=1+\cdots.$ For $\nu \in \{1, 2, 3, 4, 6\}$, the space of cusp forms $S_{2\nu+2}=\{0\}$ is trivial. Therefore, the theorem follows from the identity
	\begin{align}\label{eq:pkr-exp-eq}
		p_{2\nu+2} \left(r, \frac{r^2-\kappa^2}{4}\right) = \frac{(\kappa - r)^{2\nu+1} + (\kappa + r)^{2\nu+1}}{2^{2\nu+1} \kappa}.
	\end{align}
\end{proof}

\section{Vector-valued modular forms of dimension 2}\label{Section3}

The proof of \cref{Main-Theorem} is much more involved than the proofs of Theorems 1.2 and 1.3. Nevertheless, its proof is still based on \cref{FundGF}, and the aim is to understand the Fourier expansion of $\calG_{m,\nu}(\tau)$ arithmetically in terms of traces of Hecke operators  and shifted convolution $L$-functions. These calculations shall depend on the arithmetic of half-integral weight vector-valued modular forms that transform with respect to the Weil representation. To this end, here we recall essential preliminaries.

\subsection{The Weil representation}

Let $\calO(\bbH)$ be the set of all holomorphic functions $\phi : \bbH \to \C$. For $z \in \C \setminus \{0\}$, we take the principal branch of $z^{1/2}$ as $\arg(z^{1/2}) \in (-\pi/2, \pi/2]$. For an integer $k \in \Z$, we put $z^{k/2} = (z^{1/2})^k$. For $n \in \Z_{\ge 0}$, we put $x^{\ov{n}} \coloneqq \Gamma(x+n)/\Gamma(x) = x(x+1)\cdots (x+n-1)$, and $x^{\un{n}} \coloneqq \Gamma(x+1)/\Gamma(x-n+1) = x(x-1)\cdots(x-n+1)$.

	The \emph{metaplectic group} $\Mp_2(\R)$ is a group defined by
	\[
		\Mp_2(\R) \coloneqq \left\{ (\gamma, \phi(\tau)) : \gamma = \pmat{a & b \\ c & d} \in \SL_2(\R), \phi \in \calO(\bbH)\text{ satisfying } \phi(\tau)^2 = c\tau+d \right\},
	\]
	where the group operation is
		$(\gamma_1, \phi_1(\tau)) \cdot (\gamma_2, \phi_2(\tau)) \coloneqq (\gamma_1 \gamma_2, \phi_1(\gamma_2 \tau) \phi_2(\tau)).$
		
As usual, we have $\gamma \tau \coloneqq \frac{a\tau+b}{c\tau+d}$, and
 for any $\gamma = \smat{a & b \\ c & d} \in \SL_2(\R)$, we define $j(\gamma, \tau) = c\tau+d$ and $\widetilde{\gamma} = \left(\smat{a & b \\ c & d}, j(\gamma, \tau)^{1/2} \right) \in \Mp_2(\R)$. Let $\Mp_2(\Z)$ be the inverse image of $\SL_2(\Z)$ under the projection $\Mp_2(\R) \to \SL_2(\R)$. As usual, we let
	$T \coloneqq \left( \begin{smallmatrix}1 & 1 \\ 0 & 1\end{smallmatrix}\right)$ and  $S \coloneqq \left( \begin{smallmatrix}0 & -1 \\ 1 & 0\end{smallmatrix}\right).$
It is well known that $\Mp_2(\Z)$ is generated by $\widetilde{T}$ and $\widetilde{S}$, (see~\cite[p.16]{Bruinier2002}) and its center is generated by
\[
	\widetilde{-I} = \widetilde{S}^2 = (\widetilde{S} \widetilde{T})^3 = \left(\pmat{-1 & 0 \\ 0 & -1}, i \right).
\]		
Moreover, we let $\widetilde{\Gamma}_\infty \coloneqq \langle \widetilde{T}\rangle \times \langle \widetilde{-I} \rangle$, representing the metaplectic stabilizer for the cusp at infinity.

We recall the {\it Weil representation}\footnote{
For more general settings, see Bruinier's book~\cite[Ch.~1]{Bruinier2002} and Borcherds~\cite{Borcherds1998}.}, the unitary representation $\rho : \Mp_2(\Z) \to \GL_2(\C)$ defined by
	\begin{equation}
	\rho(\widetilde{T}) \coloneqq \pmat{1 & 0 \\ 0 & i} \quad \text{ and } \quad \rho(\widetilde{S}) \coloneqq \frac{1}{\sqrt{2i}} \pmat{1 & 1 \\ 1 & -1}.
	\end{equation}
We note that $\rho(\widetilde{-I}) = \rho(\widetilde{S}^2) = -i I$. We let $\rho^* : \Mp_2(\Z) \to \GL_2(\C)$ be the dual representation of $\rho$
\[
	\rho^*((\gamma, \phi)) \coloneqq {}^t \rho((\gamma, \phi))^{-1} = \overline{\rho((\gamma, \phi))}.
\]

We recall an explicit formula for $\rho(\widetilde{\gamma})$, which is easily derived from work of both Shintani~\cite[Proposition 1.6]{Shintani1975} and
Bruinier~\cite[Proposition 1.1]{Bruinier2002}, where for odd integers $d$ we let
\begin{align}\label{def:epsilon}
	\epsilon_d \coloneqq \begin{cases}
		1 &\text{if } d \equiv 1 \pmod{4},\\
		i &\text{if } d \equiv 3 \pmod{4}.
	\end{cases}
\end{align}

\begin{proposition}\label{prop:Weil-rep-formula}
	For $c \ge 0$, we have
	\begin{align*}
		\rho(\widetilde{\gamma}) = \begin{cases}
			\displaystyle{\frac{\epsilon_c}{1+i} \left(\frac{a}{c}\right) \pmat{1 & i^{cd} \\ i^{ac} & -i^{(a+d)c}}} &\text{if } c \equiv 1 \pmod{2},\\
			\displaystyle{\epsilon_a^{-1} \left(\frac{c}{a}\right) \pmat{0 & i^{ab} \\ 1 & 0}} &\text{if } c \equiv 2 \pmod{4},\\
			\displaystyle{\epsilon_a^{-1} \left(\frac{c}{a}\right) \pmat{1 & 0 \\ 0 & i^{ab}}} &\text{if } c \equiv 0 \pmod{4}.
		\end{cases}
	\end{align*}
\end{proposition}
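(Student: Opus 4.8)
The plan is to recognize $\rho$ as the Weil representation $\rho_L$ attached to the positive definite rank-one lattice $L = (\Z, x \mapsto x^2)$ and then specialize the standard closed-form evaluation of $\rho_L(\widetilde\gamma)$ to this $L$. The discriminant form $L'/L \cong \Z/2\Z$ of $L$ carries the quadratic form with values $q(0) = 0$ and $q(1) = 1/4 \pmod 1$, and $L$ has signature $\mathrm{sig}(L) = 1$. Writing $e(x) \coloneqq e^{2\pi i x}$, the Weil representation $\rho_L$ of this discriminant form has $\rho_L(\widetilde T)$ equal to the diagonal matrix $\mathrm{diag}(e(q(0)), e(q(1))) = \mathrm{diag}(1, i)$, and $\rho_L(\widetilde S)$ equal to $\frac{e(-\mathrm{sig}(L)/8)}{\sqrt{|L'/L|}}\bigl(e(-(\mu,\nu))\bigr)_{\mu,\nu \in \{0,1\}} = \frac{1}{\sqrt{2i}}\smat{1&1\\1&-1}$, using $e(-1/8) = 1/\sqrt{i}$ and $(1,1) = 1/2 \pmod 1$. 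These agree with the matrices defining $\rho$, so since $\widetilde T$ and $\widetilde S$ generate $\Mp_2(\Z)$ we conclude $\rho = \rho_L$ (consistently with the stated $\rho(\widetilde{-I}) = -iI = \rho_L(\widetilde{-I})$).

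With this identification, the proposition is a special case of the explicit formula for $\rho_L(\widetilde\gamma)$ with $c \neq 0$, worked out for general even lattices by Shintani (Proposition 1.6 of \cite{Shintani1975}) and Bruinier (Proposition 1.1 of \cite{Bruinier2002}): there each entry $\rho_L(\widetilde\gamma)_{\mu\nu}$ is a normalized exponential sum over $L/cL$. For our rank-one $L$ and $c > 0$, this is, up to a fixed root of unity and a power of $c$, a one-dimensional quadratic Gauss sum
$$
\rho(\widetilde\gamma)_{\mu\nu} = \frac{1}{\sqrt{2ic}} \sum_{r \bmod c} e\!\left(Q_{\mu\nu}(r)\right), \qquad \mu,\nu \in \{0,1\},
$$
with a quadratic polynomial $Q_{\mu\nu}$ whose coefficients are built from the entries of $\gamma$ and from $\mu,\nu$. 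So the proposition reduces to evaluating these Gauss sums.

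The evaluation splits according to $c \bmod 4$, mirroring the classical trichotomy for quadratic Gauss sums. For $c$ odd (so $\gcd(a,c) = 1$), one completes the square and applies $\sum_{r \bmod c} e(a r^2/c) = \bigl(\tfrac{a}{c}\bigr)\epsilon_c\sqrt{c}$; quadratic reciprocity then converts the remaining roots of unity into the entries $i^{cd}, i^{ac}, -i^{(a+d)c}$ and assembles the scalar $\tfrac{\epsilon_c}{1+i}\bigl(\tfrac{a}{c}\bigr)$, where $\tfrac{1}{1+i} = \tfrac{1}{\sqrt{2i}}$. For $c \equiv 2 \pmod 4$ and for $c \equiv 0 \pmod 4$ (where necessarily $a$ is odd), the relevant Gauss sums vanish for half the pairs $(\mu,\nu)$ and reduce to elementary expressions for the other half; a short computation then isolates the surviving entries, giving the anti-diagonal matrix $\smat{0 & i^{ab} \\ 1 & 0}$ when $c \equiv 2 \pmod 4$ and the diagonal matrix $\smat{1 & 0 \\ 0 & i^{ab}}$ when $c \equiv 0 \pmod 4$, each scaled by $\epsilon_a^{-1}\bigl(\tfrac{c}{a}\bigr)$. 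The degenerate boundary case $c = 0$, where $\gamma = \pm T^b$, is subsumed in $c \equiv 0 \pmod 4$ and follows immediately from $\rho(\widetilde T) = \mathrm{diag}(1,i)$ and $\rho(\widetilde{-I}) = -iI$.

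I expect the only genuine obstacle to be bookkeeping rather than anything conceptual: reconciling the branch-of-square-root and metaplectic cocycle conventions of \cite{Shintani1975} and \cite{Bruinier2002} with the normalization of $\rho$ fixed here, and then tracking consistently the many fourth roots of unity $i^{cd}, i^{ac}, i^{ab}$ together with the interplay of $\epsilon_c$, $\epsilon_a$, the Jacobi symbol, and quadratic reciprocity in each congruence class of $c$. To guard against sign errors I would first re-derive the cases $c = 1$ and $c = 2$ as consistency checks — against $\rho(\widetilde S)$ and against a chosen element with lower-left entry $2$ — and only then treat general $c$.
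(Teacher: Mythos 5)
Your plan is correct and is essentially the paper's own route: the paper offers no proof of this proposition, asserting only that it is ``easily derived'' from Shintani's Proposition 1.6 and Bruinier's Proposition 1.1, which is exactly the specialization and Gauss-sum evaluation you outline. Your identification of $\rho$ with the Weil representation of the rank-one lattice $(\Z, x\mapsto x^2)$, the case analysis modulo $4$, and the $c=0$ check supply precisely the bookkeeping the paper leaves to the reader.
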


We now give the definition of a vector-valued modular form that transforms under the Weil representation.
If $k \in \frac{1}{2} \Z$ and $f: \bbH \to \C^2$. For $(\gamma, \phi(\tau)) \in \Mp_2(\Z),$ then we define the \emph{slash operator}
	\begin{align*}
		(f|_{k,\rho} (\gamma, \phi)) (\tau) \coloneqq \phi(\tau)^{-2k} \rho((\gamma, \phi))^{-1} f(\gamma \tau).
	\end{align*}
	We say that $f: \bbH \to \C^2$ is a \emph{weight $k$ (vector-valued) modular form with respect to $\rho$} if
	\[
		f|_{k,\rho} (\gamma, \phi) = f
	\]
	for every $(\gamma, \phi) \in \Mp_2(\Z)$. We define them for $\rho^*$ in a similar manner.

\subsection{Jacobi's theta functions}

For later use, we recall the Jacobi theta functions (for example, see~\cite[Section 5]{EichlerZagier1985}) in this context.
	If we set $\zeta \coloneqq \e(z)$, where $\e(z) \coloneqq e^{2\pi i z},$  we have
	\begin{equation}\label{def:Jacobi-theta}
		\theta_0(\tau, z) \coloneqq \sum_{\substack{r \in \Z \\ r \equiv 0\ (2)}} q^{r^2/4} \zeta^r \ \ \ \
		{\text {\rm and}}\ \ \ \  \theta_1(\tau, z) \coloneqq \sum_{\substack{r \in \Z \\ r \equiv 1\ (2)}} q^{r^2/4} \zeta^r
	\end{equation}
	and $\Theta(\tau, z) \coloneqq \left( \begin{smallmatrix}\theta_0(\tau, z) \\ \theta_1(\tau,z)\end{smallmatrix}\right).$
The specialization $\Theta(\tau, 0)$ is 2 dimensional weight 1/2 vector-valued modular form with respect to $\rho$,  and in general is a (vector-valued) Jacobi form, which for $(\gamma, \phi)\in \Mp_2(\Z),$ in this case means that
\begin{align}\label{eq:Jacobi-trans}
	(\Theta|_{1/2, 1, \rho} (\gamma, \phi))(\tau, z) \coloneqq \phi(\tau)^{-1} \e \left(\frac{-cz^2}{c\tau+d}\right) \rho((\gamma, \phi))^{-1} \Theta \left(\frac{a\tau+b}{c\tau+d}, \frac{z}{c\tau+d}\right) = \Theta(\tau, z).
\end{align}

\section{Maass--Poincar\'{e} series and traces of singular moduli}\label{Section4}

The proof of \cref{Main-Theorem} relies on Maass--Poincar\'e series that transform with respect to the Weil representation. We construct these series following \cite{Bruinier2002}, and we relate them to traces of singular moduli.

\subsection{The Whittaker functions}

Let $M_{\mu, \nu}(z)$ and $W_{\mu, \nu}(z)$ be the Whittaker functions (for example, see~\cite[Ch.~16]{WhittakerWatson1962} and  \cites{MOS1966, NIST}). The next two lemmas are crucial for constructing Maass--Poincar\'e series.

\begin{lemma}[{\cite[7.2.1]{MOS1966}, \cite[13.15.19]{NIST}}]\label{lem:M-Whit-der} For positive integers $n,$ we have
	\[
		\frac{\dd^n}{\dd z^n} \left(e^{-z/2} z^{-\nu-1/2} M_{\mu, \nu}(z) \right) = (-1)^n \frac{(\mu + \nu + 1/2)^{\ov{n}}}{(2\nu+1)^{\ov{n}}} e^{-z/2} z^{-\nu - n/2-1/2} M_{\mu+n/2, \nu+n/2}(z).
	\]
\end{lemma}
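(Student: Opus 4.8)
The plan is to strip away the apparent $z$-power and reduce the identity to elementary transformation and differentiation formulas for Kummer's confluent hypergeometric function ${}_1F_1(a;b;z)$. Writing $M_{\mu,\nu}(z) = e^{-z/2} z^{\nu+1/2}\,{}_1F_1\!\left(\nu-\mu+\tfrac12;\,2\nu+1;\,z\right)$ and setting $a \coloneqq \nu-\mu+\tfrac12$, $b \coloneqq 2\nu+1$, the function being differentiated collapses to
\[
e^{-z/2} z^{-\nu-1/2} M_{\mu,\nu}(z) = e^{-z}\,{}_1F_1(a;b;z),
\]
so the task becomes the computation of $\frac{\dd^n}{\dd z^n}\bigl(e^{-z}\,{}_1F_1(a;b;z)\bigr)$.

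For this I would apply Kummer's transformation $e^{-z}\,{}_1F_1(a;b;z) = {}_1F_1(b-a;b;-z)$ followed by the iterated derivative formula $\frac{\dd^n}{\dd w^n}\,{}_1F_1(\alpha;\beta;w) = \frac{\alpha^{\ov{n}}}{\beta^{\ov{n}}}\,{}_1F_1(\alpha+n;\beta+n;w)$ (immediate from differentiating the defining power series term by term, or by iterating the $n=1$ case). The chain rule for $w=-z$ then yields
\[
\frac{\dd^n}{\dd z^n}\bigl(e^{-z}\,{}_1F_1(a;b;z)\bigr) = (-1)^n \frac{(b-a)^{\ov{n}}}{b^{\ov{n}}}\,{}_1F_1(b-a+n;\,b+n;\,-z),
\]
and a second application of Kummer's transformation rewrites ${}_1F_1(b-a+n;b+n;-z) = e^{-z}\,{}_1F_1(a;b+n;z)$. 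Since $b-a = \mu+\nu+\tfrac12$ and $b = 2\nu+1$, the Pochhammer prefactor is precisely $(-1)^n (\mu+\nu+\tfrac12)^{\ov{n}}/(2\nu+1)^{\ov{n}}$.

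To finish I would translate back: expanding the Whittaker function on the claimed right-hand side in ${}_1F_1$ form with parameters $\mu+\tfrac{n}{2}$, $\nu+\tfrac{n}{2}$ produces lower parameter $2(\nu+\tfrac{n}{2})+1 = b+n$, numerator parameter $(\nu+\tfrac{n}{2})-(\mu+\tfrac{n}{2})+\tfrac12 = a$, and a factor $z^{\nu+n/2+1/2}$ that exactly cancels $z^{-\nu-n/2-1/2}$; hence $e^{-z/2} z^{-\nu-n/2-1/2} M_{\mu+n/2,\nu+n/2}(z) = e^{-z}\,{}_1F_1(a;b+n;z)$, which matches the expression obtained above. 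Alternatively, one can run an induction on $n$ whose base case is the standard first-derivative relation for $M_{\mu,\nu}$ and whose inductive step applies that relation with $(\mu,\nu)$ replaced by $(\mu+\tfrac{n}{2},\nu+\tfrac{n}{2})$, the Pochhammer ratios then telescoping into $(\mu+\nu+\tfrac12)^{\ov{n}}/(2\nu+1)^{\ov{n}}$. Either way there is no genuine obstacle beyond bookkeeping the half-integer parameter shifts; the one point deserving care is verifying that the numerator parameter $\nu-\mu+\tfrac12$ is invariant under $(\mu,\nu)\mapsto(\mu+\tfrac{n}{2},\nu+\tfrac{n}{2})$, which is exactly what makes the Whittaker function reappear on the right-hand side.
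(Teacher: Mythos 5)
Your argument is correct, and in fact the paper offers no proof of this lemma at all: it is quoted verbatim from the standard references (Magnus--Oberhettinger--Soni 7.2.1, DLMF 13.15.19), so there is no in-paper argument to compare against. Your derivation is a clean, self-contained verification of the cited formula: the reduction $e^{-z/2}z^{-\nu-1/2}M_{\mu,\nu}(z)=e^{-z}\,{}_1F_1(a;b;z)$ with $a=\nu-\mu+\tfrac12$, $b=2\nu+1$, followed by Kummer's transformation, the contiguous-derivative formula $\frac{\dd^n}{\dd w^n}{}_1F_1(\alpha;\beta;w)=\frac{\alpha^{\ov{n}}}{\beta^{\ov{n}}}{}_1F_1(\alpha+n;\beta+n;w)$, and a second Kummer transformation, does produce exactly the prefactor $(-1)^n(\mu+\nu+\tfrac12)^{\ov{n}}/(2\nu+1)^{\ov{n}}$ and the function $e^{-z}\,{}_1F_1(a;b+n;z)$, which you correctly re-identify as $e^{-z/2}z^{-\nu-n/2-1/2}M_{\mu+n/2,\nu+n/2}(z)$; the key bookkeeping point, invariance of the numerator parameter $\nu-\mu+\tfrac12$ under the simultaneous shift $(\mu,\nu)\mapsto(\mu+\tfrac n2,\nu+\tfrac n2)$, is exactly as you say. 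The only caveats worth recording are the generic-parameter hypotheses implicit in the ${}_1F_1$ representation and Kummer's identity (namely $2\nu+1$ not a non-positive integer) and, for non-real or non-positive $z$, the choice of branch of $z^{\nu+1/2}$; in the paper's application the argument is $4\pi|n|y>0$ and the parameters are generic (e.g.\ $\nu=s-\tfrac12$ with $\Re(s)>1$), so both are harmless. Your proposed induction on $n$ using the first-derivative relation is an equally valid alternative.
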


\begin{lemma}[{\cite[7.5.1]{MOS1966}, \cite[13.23.1]{NIST}}]\label{lem:M-Whit-int}
	For $\Re(\nu + \alpha + 1/2) > 0$ and $2\Re(z) > \beta > 0$, we have
	\begin{align*}
		\int_0^\infty e^{-zt} t^{\alpha-1} M_{\mu, \nu}(\beta t) \dd t &= \frac{\beta^{\nu+1/2} \Gamma\left(\nu+\alpha+\frac{1}{2}\right)}{\left(z+\frac{\beta}{2}\right)^{\nu+\alpha+1/2}}\cdot  {}_2F_1 \left(\nu-\mu+\frac{1}{2}, \nu+\alpha+\frac{1}{2}; 2\nu+1; \frac{2\beta}{\beta+2z}\right),
	\end{align*}
	where ${}_2F_1(a,b; c; z)$ is the Gaussian hypergeometric function.
\end{lemma}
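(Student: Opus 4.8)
The plan is to establish this classical Laplace-transform identity by inserting the confluent hypergeometric series for $M_{\mu,\nu}$ and integrating term by term in $t$. I would start from the standard representation
\[
	M_{\mu,\nu}(z) = e^{-z/2} z^{\nu+1/2} \sum_{n=0}^\infty \frac{(\nu-\mu+1/2)^{\ov n}}{(2\nu+1)^{\ov n}\, n!}\, z^n,
\]
which converges for all $z$ (here one assumes $2\nu+1 \notin \Z_{\le 0}$, as is also needed for the ${}_2F_1$ on the right-hand side). Substituting $z \mapsto \beta t$ and multiplying by $e^{-zt}\, t^{\alpha-1}$, the integrand becomes
\[
	\beta^{\nu+1/2}\, e^{-(z+\beta/2)t}\, t^{\nu+\alpha-1/2} \sum_{n=0}^\infty \frac{(\nu-\mu+1/2)^{\ov n}}{(2\nu+1)^{\ov n}\, n!}\, (\beta t)^n .
\]

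Next I would justify interchanging $\sum_n$ with $\int_0^\infty$. Replacing each term by its absolute value turns the confluent series into one whose coefficients are $O((1+n)^M/n!)$ for some $M$, so its sum is $O(e^{\beta t})$ up to a polynomial factor in $t$ as $t\to\infty$; hence the dominating function behaves like $e^{-(\Re(z)-\beta/2)t}\, t^{\Re(\nu+\alpha)-1/2}$, which is integrable on $(0,\infty)$ precisely under the two hypotheses $2\Re(z) > \beta$ (decay at infinity) and $\Re(\nu+\alpha+1/2) > 0$ (integrability at the origin). With Tonelli/Fubini thereby in force, I integrate each monomial using $\int_0^\infty e^{-pt}\, t^{s-1}\, \dd t = \Gamma(s)\, p^{-s}$ with $p = z + \beta/2$ and $s = \nu+\alpha+1/2+n$.

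This produces
\[
	\beta^{\nu+1/2} \sum_{n=0}^\infty \frac{(\nu-\mu+1/2)^{\ov n}}{(2\nu+1)^{\ov n}\, n!}\, \beta^n\, \frac{\Gamma(\nu+\alpha+1/2+n)}{(z+\beta/2)^{\nu+\alpha+1/2+n}} ,
\]
and pulling out the factor $\Gamma(\nu+\alpha+1/2)\,(z+\beta/2)^{-\nu-\alpha-1/2}$ while writing $\Gamma(\nu+\alpha+1/2+n)/\Gamma(\nu+\alpha+1/2) = (\nu+\alpha+1/2)^{\ov n}$ leaves exactly the Gaussian series
\[
	{}_2F_1\!\left(\nu-\mu+\tfrac12,\, \nu+\alpha+\tfrac12;\, 2\nu+1;\, \frac{\beta}{z+\beta/2}\right),
\]
and since $\beta/(z+\beta/2) = 2\beta/(\beta+2z)$ this is the asserted identity. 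The only genuine obstacle is the convergence bookkeeping for the interchange of sum and integral — in particular locating the exact region of validity from the asymptotics of the confluent series — after which everything reduces to routine Pochhammer-symbol algebra.
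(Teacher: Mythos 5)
Your proof is correct. Note, though, that the paper does not prove this lemma at all: it is quoted verbatim from the classical literature (Magnus--Oberhettinger--Soni, 7.5.1, and DLMF 13.23.1), so there is no in-paper argument to compare against. What you supply is the standard textbook derivation of that tabulated Laplace transform: write $M_{\mu,\nu}(\beta t)=e^{-\beta t/2}(\beta t)^{\nu+1/2}\sum_{n\ge 0}\frac{(\nu-\mu+1/2)^{\ov{n}}}{(2\nu+1)^{\ov{n}}\,n!}(\beta t)^n$, integrate termwise against $e^{-zt}t^{\alpha-1}$ via $\int_0^\infty e^{-pt}t^{s-1}\,\dd t=\Gamma(s)p^{-s}$ with $p=z+\beta/2$, and resum into ${}_2F_1$. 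Your convergence bookkeeping is the right one and it does recover exactly the stated region of validity: the dominating function $e^{-(\Re(z)-\beta/2)t}t^{\Re(\nu+\alpha)-1/2}$ (times a polynomial) needs $2\Re(z)>\beta$ at infinity and $\Re(\nu+\alpha+1/2)>0$ at the origin, and Tonelli then licenses the interchange. Two small points you could make explicit but which cause no trouble: the hypotheses force $|2\beta/(\beta+2z)|<1$ (since $|\beta+2z|\ge \beta+2\Re(z)>2\beta$), so the resulting Gauss series converges and the resummation is legitimate, and one should take the principal branch of $(z+\beta/2)^{-\nu-\alpha-1/2}$, which is unambiguous because $z+\beta/2$ lies in the right half-plane; the standing assumption $2\nu+1\notin\Z_{\le 0}$, which you flag, is implicit in the definition of $M_{\mu,\nu}$ and is harmless for the paper's application, where $2\nu+1=2s$ with $\Re(s)>3/4$. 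In short, your argument is a complete and correct proof of the cited classical identity, which the paper itself treats as a black box.
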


For $n \in \Z$, $k \in \frac{1}{2} \Z$, $y > 0$, and $s \in \C$, we define the modified Whittaker functions
\begin{align}\label{def:MW-cal}
	\calM_{k,n}(y,s) &\coloneqq \begin{cases}
		\Gamma(2s)^{-1} (4\pi |n|y)^{-k/2} M_{\sgn(n) \frac{k}{2}, s-1/2}(4\pi |n|y) &\text{if } n \neq 0,\\
			y^{s-k/2} &\text{if } n = 0,
	\end{cases}\\
	\calW_{k,n}(y,s) &\coloneqq \begin{cases}
		\Gamma(s+ \sgn(n) \frac{k}{2})^{-1} |n|^{k-1} (4\pi |n| y)^{-k/2} W_{\sgn(n) \frac{k}{2}, s-1/2}(4\pi|n|y) &\text{if } n \neq 0,\\
		\dfrac{(4\pi)^{1-k} y^{1-s-k/2}}{(2s-1) \Gamma(s-k/2) \Gamma(s+k/2)} &\text{if } n = 0.
	\end{cases}
\end{align}
The special values of these functions at $s = k/2$ play a crucial role in the construction of the Maass--Poincar\'e series. To this end, for $n < 0$, we have
\begin{align}\label{eq:M-special}
	\calM_{k,n} \left(y, \frac{k}{2}\right) &= \Gamma(k)^{-1} e^{-2\pi ny}.
\end{align}
As for the $\mathcal{W}$-function, we have
\begin{align}\label{eq:W-special}
	\calW_{k,n} \left(y, \frac{k}{2}\right) = \begin{cases}
		\Gamma(k)^{-1} n^{k-1} e^{-2\pi ny} &\text{if } n > 0,\\
		0 &\text{if } n \le 0,
	\end{cases}
\end{align}
(see~\cite[7.2.4]{MOS1966}). Moreover, we note that, (\cite[7.6.1]{MOS1966}, \cite[13.14]{NIST}),
\begin{align}\label{eq:W-asymp}
\begin{split}
	W_{\mu, \nu}(y) &\sim e^{-y/2} y^\mu \quad (y \to \infty),\\
	M_{\mu, \nu}(y) &= y^{\nu+1/2} (1+ O(y)) \quad (y \to 0).
\end{split}
\end{align}

\subsection{Kloosterman sums}

The Fourier expansions of the Maass--Poincar\'e series require Kloosterman sums, which we recall here.
	For $k \in \frac{1}{2}\Z \setminus \Z$, $m, n \in \Z$, and $c>0$ with $c \equiv 0 \pmod{4}$, we define the half-integral weight \emph{Kloosterman sum} by
\begin{equation}\label{def:Kloosterman-sum}
		K_k(m,n,c) \coloneqq \sum_{d\ (c)^*} \left(\frac{c}{d}\right) \epsilon_d^{2k} \e \left(\frac{m \overline{d} + nd}{c}\right),
\end{equation}
	where $\overline{d} \in \Z/c\Z$ satisfies that $d \overline{d} \equiv 1 \pmod{c}$. The condition $d\ (c)^*$ means that $d$ runs over $d \in \Z/c\Z$ such that $(c,d) = 1$.
We note that the Kloosterman sums satisfy
\begin{align}\label{eq:Kloosterman}
	K_{k+2}(m,n,c) = K_k(m,n,c) \quad \text{ and } \quad K_{3/2}(m,n,c) = -i K_{1/2}(-m,-n,c).
\end{align}

We now relate the Weil representation to such Kloosterman sums. For notational convenience, we let
\[
	\rho(\widetilde{\gamma}) = \pmat{\rho(\widetilde{\gamma})_{00} & \rho(\widetilde{\gamma})_{01} \\ \rho(\widetilde{\gamma})_{10} & \rho(\widetilde{\gamma})_{11}}.
\]
Then the following sum formula holds for each entry of $\rho(\widetilde{\gamma})$.

\begin{proposition}\label{conj-Kloosterman}
	If $\alpha, \beta \in \{0,1\}$ and  $m$ and $n$ satisfy $m \equiv -\alpha \pmod{4}$ and $n \equiv -\beta \pmod{4},$ then for every positive integer  $c$ we have
	$$
	\frac{1}{4} \left(1 + \left(\frac{4}{c}\right) \right) K_{3/2}(m,n,4c)=
	\sum_{d\ (c)^*} \rho(\widetilde{\gamma})_{\alpha \beta} \e\left(\frac{ma + nd}{4c}\right),
	$$
	where we take any $\gamma = \smat{a & b \\ c & d} \in \SL_2(\Z)$ for which $(c,d)$ forms its bottom row.
\end{proposition}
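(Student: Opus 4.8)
The plan is to prove the identity by a direct computation: substitute the explicit description of $\rho(\widetilde\gamma)$ from \cref{prop:Weil-rep-formula} on the left, the definition \eqref{def:Kloosterman-sum} of $K_{3/2}(m,n,4c)$ on the right, and match the two sides term by term, organizing the argument into the three cases $c$ odd, $c\equiv 2\pmod 4$, and $c\equiv 0\pmod 4$ that mirror \cref{prop:Weil-rep-formula}.

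Two preliminary points set up the matching. First, the sum on the left is genuinely indexed by $d$ modulo $c$: since $\rho(\widetilde T)=\smat{1&0\\0&i}$, replacing $\gamma$ by $\widetilde T^k\gamma$ (resp.\ $\gamma\widetilde T^k$) multiplies $\rho(\widetilde\gamma)_{\alpha\beta}$ by $i^{k\alpha}$ (resp.\ $i^{k\beta}$) and simultaneously shifts $a\mapsto a+kc$ (resp.\ $d\mapsto d+kc$), which changes the exponential by $\e(mk/4)=i^{-\alpha k}$ (resp.\ $\e(nk/4)=i^{-\beta k}$) because $m\equiv-\alpha$ and $n\equiv-\beta\pmod 4$; these factors cancel, so the summand does not depend on the representative $\gamma$ over the bottom row $(c,d)$. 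Second, the normalizing constant is a lift count: a class $d\bmod c$ coprime to $c$ has exactly two odd lifts to $\Z/4c\Z$ when $c$ is odd and exactly four when $c$ is even, and these are precisely the residues $d'\bmod 4c$ contributing to $K_{3/2}(m,n,4c)$ above $d$. Since $2$ and $4$ are the reciprocals of $\tfrac14(1+(\tfrac4c))$ in these two cases, it suffices to show that for each $d$ the left-hand summand equals the \emph{average}, over the odd lifts $d'$ of $d$, of the summand $\left(\frac{4c}{d'}\right)\epsilon_{d'}^{3}\,\e((m\ov{d'}+nd')/4c)$ of $K_{3/2}(m,n,4c)$.

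To carry this out I would use the Chinese Remainder Theorem to split each exponential $\e((\,\cdot\,)/4c)$ into a ``modulus $c$'' part and a ``modulus $4$'' part. Because $a\ov d\equiv 1\pmod c$, the modulus-$c$ parts on the two sides coincide and cancel out of the identity, while the modulus-$4$ parts reduce, via $m\equiv-\alpha$ and $n\equiv-\beta\pmod 4$, to explicit powers of $i$ involving $\alpha$, $\beta$, the relevant residues mod $4$, and $c^{-1}\bmod 4$. When $c$ is odd this leaves, for each $d$, a two-term sum over $d'\equiv 1$ and $d'\equiv 3\pmod 4$; rewriting $\left(\frac{4c}{d'}\right)=\left(\frac{d'}{c}\right)(-1)^{\frac{c-1}{2}\frac{d'-1}{2}}=\left(\frac{a}{c}\right)(-1)^{\frac{c-1}{2}\frac{d'-1}{2}}$ by quadratic reciprocity and evaluating the resulting short Gauss sum in $\mu=d'\bmod 4\in\{1,3\}$ reproduces exactly the appropriate entry of $\frac{\epsilon_c}{1+i}\left(\frac{a}{c}\right)\smat{1&i^{cd}\\i^{ac}&-i^{(a+d)c}}$. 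When $c$ is even one proceeds in the same spirit, but $\gcd(4,c)>1$ makes the splitting coarser, and one instead tracks how $\left(\frac{4c}{d'}\right)$, $\epsilon_{d'}$, and the exponential vary as $d'$ runs over the four odd lifts of $d$, matching the average against $\epsilon_a^{-1}\left(\frac{c}{a}\right)\smat{0&i^{ab}\\1&0}$ when $c\equiv 2\pmod 4$ and against $\epsilon_a^{-1}\left(\frac{c}{a}\right)\smat{1&0\\0&i^{ab}}$ when $c\equiv 0\pmod 4$; the vanishing off-diagonal entries in the last case correspond to a vanishing of $K_{3/2}(m,n,4c)$ when exactly one of $m,n$ is divisible by $4$.

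The main obstacle is the uniform bookkeeping of fourth and eighth roots of unity: the symbols $\epsilon_d$ and $\epsilon_a$, the prefactors $i^{cd},i^{ac},i^{(a+d)c},i^{ab}$ of \cref{prop:Weil-rep-formula}, the Gauss sums in the variable $d'\bmod 4$, and the reciprocity sign changes must all be lined up consistently across the various branch conventions. The cases with $c$ even — and especially $c\equiv 2\pmod 4$, where the four lift-summands of $K_{3/2}$ are genuinely distinct and both $\left(\frac{2}{d'}\right)$ and $\epsilon_{d'}$ change along the lifts — are the most delicate; once the averaging over lifts and the CRT splitting are set up, the remaining verification is mechanical.
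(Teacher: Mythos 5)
Your plan is correct and follows essentially the same route as the paper: the well-definedness check under $\gamma\mapsto T^j\gamma T^l$ and the reduction to a per-residue identity equating $\rho(\widetilde{\gamma})_{\alpha\beta}\e\bigl(\tfrac{ma+nd}{4c}\bigr)$ with the average of the Kloosterman summands over the odd lifts of $d$ modulo $4c$ is exactly the paper's refined equation \eqref{eq:refined}. The subsequent case analysis in $c\bmod 4$ via \cref{prop:Weil-rep-formula}, quadratic reciprocity, and short root-of-unity sums is the same computation the paper carries out (explicitly only for $c,a$ odd, with the remaining cases likewise left as mechanical verification).
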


\begin{proof}
	First, we check that the right-hand side is well-defined. Let $R_{\alpha \beta}(\gamma)$ denote its summand. It suffices to show that $R_{\alpha \beta}(T^j \gamma T^l) = R_{\alpha \beta}(\gamma)$ holds for any $j, l \in \Z$. Since $\widetilde{T^j \gamma T^l} = \widetilde{T}^j \widetilde{\gamma} \widetilde{T}^l$ holds, we have
	\[
		R_{\alpha \beta}(T^j \gamma T^l) = i^{\alpha j + \beta l} \rho(\widetilde{\gamma})_{\alpha \beta} \e \left(\frac{ma+nd}{4c}\right) \e \left(\frac{mj+nl}{4}\right) = R_{\alpha \beta}(\gamma).
	\]
	Next, for each $\gamma = \smat{a & b \\ c & d} \in \SL_2(\Z)$ with $c > 0$, we prove the refined equation
	\begin{align}\label{eq:refined}
		\rho(\widetilde{\gamma})_{\alpha \beta} = \frac{1}{4} \left(1 + \left(\frac{4}{c}\right) \right) \sum_{\substack{\delta\ (4c) \\ \delta \equiv d\ (c) \\ \delta \equiv 1\ (2)}} \left(\frac{c}{\delta}\right) \epsilon_{\delta}^{-1} \e \left(\frac{a - \overline{\delta}}{4c}\right)^\alpha \e \left(\frac{d-\delta}{4c}\right)^\beta,
	\end{align}
	where $\overline{\delta}$ is the inverse of $\delta$ in $(\Z/4c\Z)^\times$. This immediately implies the proposition. 
	
	To confirm \eqref{eq:refined},  let $\delta = d_j \coloneqq d + cj$ and $b_j \coloneqq b+aj$ ($j = 0,1,2,3$). For simplicity, let $\rho''(\gamma)_{\alpha \beta}$ denote the right-hand side of \eqref{eq:refined} and show that $\rho''(\gamma)_{\alpha \beta} = \rho(\widetilde{\gamma})_{\alpha \beta}$. If $\delta$ is odd, then we can easily check that 
	\[
		\overline{\delta} = \begin{cases}
			a(1+b_j c) &\text{if } c \equiv 1 \pmod{2} \text{ and } a \equiv 1 \pmod{2},\\
			(a+c)(1+(b_j+d_j)c) &\text{if } c \equiv 1 \pmod{2} \text{ and } a \equiv 0 \pmod{2},\\
			a(1-ab_j cd_j) &\text{if } c \equiv 2 \pmod{4},\\
			a(1-b_jc) &\text{if } c \equiv 0 \pmod{4}.
		\end{cases}
	\]
We prove the case where $c \equiv 1 \pmod{2}$ and $a \equiv 1 \pmod{2}$, leaving the others to the reader.
We have
	\begin{align*}
		\rho''(\gamma)_{\alpha \beta} &= \frac{1}{2} \sum_{\substack{0 \le j \le 3 \\ d_j \equiv 1\ (2)}} \left(\frac{c}{d_j}\right) \epsilon_{d_j}^{-1} \e \left(\frac{-ab_j}{4}\right)^\alpha \e \left(\frac{-j}{4}\right)^\beta
		= \frac{i^{-ab \alpha}}{2} \left(\frac{d}{c}\right) \sum_{\substack{0 \le j \le 3 \\ d_j \equiv 1\ (2)}} (-1)^{\frac{(c-1)(d_j-1)}{4}} \epsilon_{d_j}^{-1} i^{-j(\alpha+\beta)}.
	\end{align*}
	Since the value of the sum depends only on $c, d \pmod{4}$, a direct calculation yields 
	\[
		\rho''(\gamma)_{\alpha \beta} = \frac{i^{-ab \alpha}}{2} \left(\frac{d}{c}\right) \frac{2\epsilon_c}{1+i} \times \begin{cases}
			1 &\text{if } (\alpha, \beta) = (0,0),\\
			i^{cd} &\text{if } (\alpha, \beta) = (0,1), (0,1),\\
			(-1)^{d-1} &\text{if } (\alpha, \beta) = (1,1).
		\end{cases}
	\]
	Combining simple calculations with \cref{prop:Weil-rep-formula}, one obtains $\rho(\widetilde{\gamma})_{\alpha \beta}$.
\end{proof}

\subsection{The Maass--Poincar\'{e} series}
Using the  two previous subsections, we now construct  the Maass--Poincar\'e series.
We let $\fe_0 \coloneqq \smat{1 \\ 0}$ and $\fe_1 \coloneqq \smat{0 \\ 1}$.
	Assume that $k \in \frac{1}{2} \Z$ satisfies $2k \equiv 3 \pmod{4}$. For $\alpha \in \{0, 1\}$ and $m \equiv -\alpha \pmod{4}$, we define the \emph{Maass--Poincar\'{e} series} of weight $k$ with respect to $\rho^*$ by
	\begin{align}\label{def:PoincareSeries}
		P_{k, \rho^*}^{(\alpha, m)} (\tau, s) &\coloneqq \sum_{(\gamma, \phi) \in \widetilde{\Gamma}_\infty \backslash \Mp_2(\Z)} \calM_{k,m} \left(\frac{v}{4}, s\right) \e \left(\frac{mu}{4}\right) \fe_\alpha \bigg|_{k,\rho^*} (\gamma, \phi)\\
			&\ = \sum_{\gamma \in \Gamma_\infty \backslash \Gamma} \calM_{k,m} \left(\frac{v}{4}, s\right) \e\left(\frac{mu}{4}\right) \fe_\alpha \bigg|_{k,\rho^*} \widetilde{\gamma}.\notag
	\end{align}
	This series converges absolutely and uniformly on compact subsets in $\Re(s) > 1$ \cite[p.29]{Bruinier2002}, and we note that $\calM_{k,m}(v/4, s) \e(mu/4) \fe_\alpha$ is invariant under  $|_{k,\rho^*} (\gamma, \phi)$ for any $(\gamma, \phi) \in \widetilde{\Gamma}_\infty$ as $2k \equiv 3 \pmod{4}$. 
	
	The Fourier expansions of the functions involve
the Bessel functions (see \cite[Ch.~3]{MOS1966} and \cite[Ch.~17]{WhittakerWatson1962})
\begin{align*}
	I_\nu(z) \coloneqq \sum_{m=0}^\infty \frac{(z/2)^{\nu+2m}}{m! \Gamma(\nu+m+1)}, \quad J_\nu(z) \coloneqq \sum_{m=0}^\infty \frac{(-1)^m (z/2)^{\nu+2m}}{m! \Gamma(\nu+m+1)}.
\end{align*}

\begin{proposition}\label{Thm:Fourier-exp}
	For $\Re(s) > 1$, we have
	\begin{align*}
		P_{k,\rho^*}^{(\alpha, m)}(\tau, s) = \calM_{k,m} \left(\frac{v}{4},s\right) \e\left(\frac{mu}{4}\right) \fe_\alpha + \sum_{\beta \in \{0,1\}} \sum_{\substack{n \in \Z \\ n \equiv -\beta\ (4)}} b_{m,k}^{(\beta)}(n,s) \calW_{k,n} \left(\frac{v}{4}, s\right) \e\left(\frac{nu}{4}\right) \fe_\beta,
	\end{align*}
	where
	\begin{align*}
		b_{m,k}^{(\beta)}(n, s) &= 2\pi i^{-k} \sum_{c > 0} \left(1+\left(\frac{4}{c}\right)\right) \frac{K_{3/2}(m,n,4c)}{4c}\\
		&\qquad \qquad \times \begin{cases}
		\displaystyle{|mn|^{\frac{1-k}{2}} J_{2s-1} \left(\frac{\pi \sqrt{|mn|}}{c}\right)} &\text{if } mn > 0,\\
		\displaystyle{|mn|^{\frac{1-k}{2}} I_{2s-1} \left(\frac{\pi \sqrt{|mn|}}{c}\right)} &\text{if } mn < 0,\\
		\displaystyle{2^{k-1} \pi^{s+k/2-1} |m+n|^{s-k/2} (4c)^{1-2s}} &\text{if } mn=0, m+n \neq 0,\\
		\displaystyle{2^{2k-2} \pi^{k-1} \Gamma(2s) (8c)^{1-2s}} &\text{if } m = n = 0.
	\end{cases}
	\end{align*}
\end{proposition}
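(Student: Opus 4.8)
\emph{Proof proposal.} The plan is to carry out the classical unfolding computation for Poincar\'e series attached to a multiplier system, as in \cite[Ch.~1]{Bruinier2002}, adapted here to the half-integral weight dual Weil representation $\rho^*$. First I would split the sum \eqref{def:PoincareSeries} according to the bottom row of $\gamma$. The coset of the identity in $\Gamma_\infty \backslash \Gamma$ contributes exactly the leading term $\calM_{k,m}(v/4,s)\,\e(mu/4)\,\fe_\alpha$, and, using the central element $\widetilde{-I}$, the remaining cosets are parametrized by pairs $(c,d)$ with $c>0$, $d\in\Z$, and $\gcd(c,d)=1$. Writing $g(\tau)=\calM_{k,m}(v/4,s)\,\e(mu/4)\,\fe_\alpha$ for the seed and $\gamma=\smat{a&b\\c&d}$, the identity $\rho^*=\overline{\rho}$ together with unitarity of $\rho$ gives $\rho^*(\widetilde{\gamma})^{-1}\fe_\alpha=\sum_{\beta\in\{0,1\}}\rho(\widetilde{\gamma})_{\alpha\beta}\fe_\beta$, hence $g|_{k,\rho^*}\widetilde{\gamma}=(c\tau+d)^{-k}\,\calM_{k,m}\!\big(\tfrac{\Im(\gamma\tau)}{4},s\big)\,\e\!\big(\tfrac{m\Re(\gamma\tau)}{4}\big)\sum_{\beta}\rho(\widetilde{\gamma})_{\alpha\beta}\fe_\beta$. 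Thus the $\fe_\beta$-component of $P_{k,\rho^*}^{(\alpha,m)}$ is the sum of the corresponding scalar terms over all admissible $(c,d)$.

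To extract the $n$th Fourier coefficient of the $\fe_\beta$-component I would integrate it against $\tfrac14\e(-\tfrac{nu}{4})$ over $u\in[0,4]$, after writing $d=d_0+c\ell$ with $d_0$ running over residues mod $c$ coprime to $c$ and $\ell\in\Z$ and choosing $\gamma_{d_0+c\ell}=\gamma_{d_0}T^{\ell}$. Using the congruences $m\equiv-\alpha$, $n\equiv-\beta\pmod 4$ to cancel the $i$-powers picked up from $\widetilde T^{\ell}$, the $\ell$-sum unfolds the integral over $[0,4]$ to an integral over all of $\R$; then using $\gamma\tau=\tfrac{a}{c}-\tfrac1{c(c\tau+d)}$ and the shift $u\mapsto u+d_0/c$ followed by $u\mapsto u/v$ reduces the problem, for each $c$ and $d_0$, to a single Whittaker-type Archimedean integral together with the arithmetic factor $\rho(\widetilde{\gamma_{d_0}})_{\alpha\beta}\,\e\!\big(\tfrac{ma+nd_0}{4c}\big)$. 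Evaluating that integral by \cref{lem:M-Whit-int} together with \cref{lem:M-Whit-der} produces $J_{2s-1}(\pi\sqrt{|mn|}/c)$ when $mn>0$, $I_{2s-1}(\pi\sqrt{|mn|}/c)$ when $mn<0$, and the elementary $(4c)^{1-2s}$-expressions when $mn=0$, in each case carrying the right $\Gamma$- and $\pi$-factors and attached to $\calW_{k,n}(v/4,s)\,\e(nu/4)\,\fe_\beta$; the normalizations built into \eqref{def:MW-cal} are chosen precisely so that these match.

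It remains to identify the arithmetic sum $\sum_{d_0\ (c)^*}\rho(\widetilde{\gamma_{d_0}})_{\alpha\beta}\,\e\!\big(\tfrac{ma+nd_0}{4c}\big)$, whose $\e(ma/4c)$-factor comes out of the integral via $\Re(\gamma\tau)$ and whose $\e(nd_0/4c)$-factor comes from the shift $u\mapsto u+d_0/c$. By \cref{conj-Kloosterman} this sum equals $\tfrac14\big(1+\big(\tfrac{4}{c}\big)\big)K_{3/2}(m,n,4c)$. Summing over $c>0$ and collecting the constant factors (including the overall $2\pi i^{-k}$) then yields the claimed formula for $b_{m,k}^{(\beta)}(n,s)$; the absolute convergence for $\Re(s)>1$ noted after \eqref{def:PoincareSeries} justifies the termwise integration throughout.

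I expect the real work to be bookkeeping rather than anything conceptual: one must keep the half-integral weight multiplier, the branch convention for $(c\tau+d)^{-k}$, and the four case distinctions in the Archimedean integral mutually consistent, and in particular verify that every $i$-power picked up from the $\widetilde T$-translations is cancelled by the congruences on $m,n$ so that the $d$-sum genuinely closes up into a Kloosterman sum. That last point is the one genuinely new ingredient here, but it has been isolated as \cref{conj-Kloosterman}, so in this proof it is only a matter of applying it.
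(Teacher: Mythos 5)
Your proposal is correct and follows essentially the same route as the paper's proof: split off the identity coset, organize the $c>0$ terms by double cosets (i.e.\ $d \bmod c$ together with the $T^{\ell}$-translates), use the congruences mod $4$ to close up the $\ell$-sum and unfold the Fourier integral to $\R$, shift $u\mapsto u+d_0/c$ to extract $\rho(\widetilde{\gamma})_{\alpha\beta}\e\bigl(\tfrac{ma+nd_0}{4c}\bigr)$, evaluate the remaining Archimedean Whittaker integral to get the four Bessel/elementary cases, and apply \cref{conj-Kloosterman} to recognize the Kloosterman sum. The only cosmetic difference is that the paper quotes the Archimedean integral evaluation from Bruinier's proof of his Theorem~1.9 rather than deriving it from \cref{lem:M-Whit-int} (which is a Laplace-transform identity used later, not the Fourier-transform computation needed here), but this does not affect the soundness of your approach.
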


\subsection{Proof of \cref{Thm:Fourier-exp}}

Following the method in~\cite[Section 1.3]{Bruinier2002}, we have
\begin{align*}
	P_{k, \rho^*}^{(\alpha, m)} (\tau, s) &= \calM_{k,m} \left(\frac{v}{4}, s\right) \e\left(\frac{mu}{4}\right) \fe_\alpha + \sum_{\substack{\gamma = \smat{a & b \\ c & d} \in \Gamma_\infty \backslash \Gamma \\ c > 0}} \calM_{k,m} \left(\frac{v}{4}, s\right) \e\left(\frac{mu}{4}\right) \fe_\alpha \bigg|_{k,\rho^*} \widetilde{\gamma}.
\end{align*}
Let $H_{k,\rho^*}^{(\alpha, m)}(\tau, s)$ denote the sum of the second term, which is calculated as follows:
\begin{align*}
	H_{k,\rho^*}^{(\alpha, m)}(\tau,s) &= \sum_{\substack{\gamma \in \Gamma_\infty \backslash \Gamma \\ c > 0}} j(\gamma, \tau)^{-k} \calM_{k,m}\left(\frac{\Im (\gamma \tau)}{4}, s\right) \e \left(\frac{m \Re (\gamma \tau)}{4} \right) \rho^* (\widetilde{\gamma})^{-1} \fe_\alpha\\
	&= \sum_{\substack{\gamma \in \Gamma_\infty \backslash \Gamma/\Gamma_\infty \\ c >0}} \sum_{h \in \Z} j(\gamma, T^h \tau)^{-k} \calM_{k,m}\left(\frac{\Im(\gamma T^h \tau)}{4}, s\right) \e\left(\frac{m \Re(\gamma T^h \tau)}{4}\right) \pmat{1 & 0 \\ 0 & i^h} {}^t \rho (\widetilde{\gamma}) \fe_\alpha.
\end{align*}
Here we used the facts that $j(\gamma T^h, \tau) = j(\gamma, T^h \tau)$, $\widetilde{\gamma T^h} = \widetilde{\gamma} \widetilde{T}^h$, and
\[
	\rho^*(\widetilde{T}^h)^{-1} = {}^t \rho(\widetilde{T})^h = \pmat{1 & 0 \\ 0 & i^h}.
\]
Since the inner sum over $h \in \Z$ is invariant under $\tau \to \tau+4$, we obtain the Fourier expansion
\[
	H_{k,\rho^*}^{(\alpha, m)}(\tau,s) = \sum_{\substack{\gamma \in \Gamma_\infty \backslash \Gamma/\Gamma_\infty \\ c>0}} \sum_{\beta \in \{0,1\}} \sum_{n \in \Z} c_{\beta, \gamma}(n,v) \e \left(\frac{nu}{4}\right) \fe_\beta,
\]
where the sum over $\gamma \in \Gamma_\infty \backslash \Gamma / \Gamma_\infty$ with $c > 0$ is equivalent to the sum over $c > 0$ and $d \in (\Z/c\Z)^\times$ and 
\begin{align*}
	&c_{\beta, \gamma}(n, v)\\
	&\ \ \ = \frac{1}{4} \int_0^4 \e\left(\frac{-nu}{4}\right) {}^t \fe_\beta \sum_{h \in \Z} j(\gamma, T^h \tau)^{-k} \calM_{k,m}\left(\frac{\Im(\gamma T^h \tau)}{4}, s\right) \e\left(\frac{m \Re(\gamma T^h \tau)}{4}\right) \pmat{1 & 0 \\ 0 & i^h} {}^t \rho (\widetilde{\gamma}) \fe_\alpha \dd u.
\end{align*}
Using $\epsilon_d$ introduced in \eqref{def:epsilon}, we find that
\begin{align*}
	&= \frac{\rho(\widetilde{\gamma})_{\alpha \beta}}{4} \sum_{h =0}^3 \epsilon_{(-1)^\beta}^h \e \left(\frac{nh}{4}\right) \int_{-\infty}^\infty \e \left(\frac{-nu}{4}\right) j(\gamma, \tau)^{-k} \calM_{k,m} \left(\frac{\Im(\gamma \tau)}{4}, s\right) \e\left(\frac{m \Re(\gamma \tau)}{4}\right) \dd u\\
	&= \begin{cases}
		\displaystyle{\rho(\widetilde{\gamma})_{\alpha \beta} \int_{-\infty}^\infty \e\left(\frac{-nu}{4}\right) j(\gamma, \tau)^{-k} \calM_{k,m}\left(\frac{\Im(\gamma \tau)}{4}, s\right) \e\left(\frac{m \Re(\gamma \tau)}{4}\right) \dd u} &\ \ \ \text{if } n \equiv -\beta \pmod{4},\\
		0 &\ \ \ \text{if otherwise}.
	\end{cases}
\end{align*}
Next, we compute the integral. Assume that $n \equiv -\beta \pmod{4}$. Since we have
\[
	\gamma \tau = \frac{a}{c} - \frac{u + d/c}{c^2|\tau+d/c|^2} + i \frac{v}{c^2|\tau+d/c|^2}
\]
for $\gamma \in \SL_2(\R)$, we find that
\begin{align*}
	&c_{\beta, \gamma}(n,v)
	= \rho(\widetilde{\gamma})_{\alpha \beta} \int_{-\infty}^\infty c^{-k} (\tau+d/c)^{-k} \calM_{k,m} \left(\frac{v}{4c^2|\tau+d/c|^2}, s\right) \e\left(\frac{m}{4} \left(\frac{a}{c} - \frac{u + d/c}{c^2|\tau+d/c|^2} \right) - \frac{nu}{4}\right) \dd u.
\end{align*}
By changing variables via $u' = u+d/c$ and $\tau' = u'+iv$, we have
\begin{align*}
	&= \rho(\widetilde{\gamma})_{\alpha \beta} \e \left(\frac{ma+nd}{4c} \right) \int_{-\infty}^\infty c^{-k} \tau'^{-k} \calM_{k,m} \left(\frac{v}{4c^2|\tau'|^2}, s\right) \e\left(- \frac{mu'}{4c^2|\tau'|^2} - \frac{nu'}{4} \right) \dd u'.
\end{align*}
The last integral, denoted by $I_m(n)$, is calculated as follows (for example, see~\cite[Proof of Theorem 1.9]{Bruinier2002})
\begin{align*}
	I_m(n) = \begin{cases}
		\displaystyle{\frac{2 \pi i^{-k}}{c} |mn|^{\frac{1-k}{2}} J_{2s-1} \left(\frac{\pi \sqrt{|mn|}}{c}\right) \calW_{k,n}(v/4, s)} &\text{if } mn > 0,\\
		\displaystyle{\frac{2 \pi i^{-k}}{c} |mn|^{\frac{1-k}{2}} I_{2s-1} \left(\frac{\pi \sqrt{|mn|}}{c}\right) \calW_{k,n}(v/4, s)} &\text{if } mn < 0,\\
		\displaystyle{\frac{4^{1+k/2-2s} \pi^{s+k/2} i^{-k} |m+n|^{s-k/2}}{c^{2s}} \calW_{k,n}(v/4, s)} &\text{if } mn=0, m+n \neq 0,\\
		\displaystyle{\frac{4^{-3s+k+1} \pi^k i^{-k}}{c^{2s}} \Gamma(2s) \calW_{k,0}(v/4, s)} &\text{if } m = n = 0.
	\end{cases}
\end{align*}
By combining this with
\cref{conj-Kloosterman}, we obtain \cref{Thm:Fourier-exp}.

\subsection{Traces of singular moduli}\label{section:Traces}

The coefficients of these functions are related to traces of singular moduli, which is analogous to several previous works
(for example, see~\cite{BringmannOno2007, BruinierJenkinsOno2006, DukeImamogluToth2011}).
To make this precise, we consider weight 3/2 modular forms $h$ on $\Gamma_0(4)$ satisfying
\begin{align}\label{eq:Kohnen-plus}
	h(\tau) = \sum_{n \equiv 0,3\ (4)} c_n(v) q^n.
\end{align}
We define $h_i(\tau) = \sum_{n \equiv -i\ (4)} c_n(v/4) q^{n/4}$ for $i \in \{0, 1\}$, and then we have that
\begin{align}\label{eq:Kohnen-vector}
	H(\tau) \coloneqq \pmat{h_0(\tau) \\ h_1(\tau)} 
\end{align}
is a weight 3/2 vector-valued modular form with respect to $\rho^*$ (see~\cite[Section 5]{EichlerZagier1985} and \cite[Ch.~2]{BFOR2017}). 

We relate the $g_m(\tau)$ in \eqref{gm} and $g_0(\tau) \coloneqq \mathcal{H}(\tau)$ to the Maass--Poincar\'e expressions
\begin{align}
		G_m(\tau, s) \coloneqq \begin{cases}
			\displaystyle{-\frac{1}{12} P_{3/2, \rho^*}^{(0,0)} (\tau, s)} &\text{if } m=0,\\
			\displaystyle{-\frac{\sqrt{\pi}}{2} \sum_{n \mid m} n P_{3/2, \rho^*}^{(\alpha, -n^2)}(\tau, s) + 2 \sigma_1(m) P_{3/2, \rho^*}^{(0,0)} (\tau, s)} &\text{if } m > 0,
		\end{cases}
	\end{align}
	where $\alpha \equiv n^2 \pmod{4}$ for each $n$.  To be precise, we have the following theorem.

\begin{theorem}\label{Thm:construct:Gm}
	If $m$ is a non-negative integer, then we have
	\[
		\lim_{s \to 3/4} G_m(\tau, s) = \pmat{g_{m,0}(\tau) \\ g_{m,1}(\tau)}.
	\]
\end{theorem}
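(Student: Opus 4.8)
The plan is to realize both sides as weight $3/2$ vector-valued harmonic Maass forms with respect to $\rho^*$, to check that they have the same principal part and the same shadow, and then to invoke uniqueness.

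\textbf{Step 1 (Both sides are weight $3/2$ harmonic Maass forms for $\rho^*$).} By the scalar-to-vector dictionary \eqref{eq:Kohnen-plus}--\eqref{eq:Kohnen-vector} applied to Zagier's weakly holomorphic form $g_m$ of \eqref{gm} (for $m>0$) and to $g_0\coloneqq\mathcal{H}$, the vector $\pmat{g_{m,0}(\tau)\\ g_{m,1}(\tau)}$ is such a form, weakly holomorphic when $m>0$. For the left-hand side, the series $P^{(\alpha,m)}_{3/2,\rho^*}(\tau,s)$ converges only for $\Re(s)>1$, so I would first establish its analytic continuation to a neighbourhood of $s=3/4$ following \cite[Ch.~1]{Bruinier2002}: via the Fourier expansion of \cref{Thm:Fourier-exp}, the Kloosterman--Selberg Dirichlet series built from the $K_{3/2}(m,n,4c)$ and the Bessel factors continue by the Weil bound, the only possible pole being at $s=1$ from the Eisenstein-type ($mn=0$) terms. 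Since $3/2<2$, the value at the harmonic point $s=3/4=k/2$ is a weight $3/2$ harmonic Maass form for $\rho^*$.

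\textbf{Step 2 (Matching the principal part).} Specializing $s=3/4$, by \eqref{eq:M-special} and $\Gamma(3/2)=\sqrt\pi/2$ the seed of $P^{(\alpha,-n^2)}_{3/2,\rho^*}(\tau,s)$ at $s=3/4$ is $\tfrac{2}{\sqrt\pi}\,q^{-n^2/4}\fe_\alpha$ with $\alpha\equiv n^2\pmod 4$, and the seed of $P^{(0,0)}_{3/2,\rho^*}(\tau,s)$ is the constant $\fe_0$. Hence $\lim_{s\to3/4}G_m(\tau,s)$ has holomorphic principal part $-\sum_{n\mid m}n\,q^{-n^2/4}\fe_{\alpha_n}$, together with the constant $2\sigma_1(m)\fe_0$ contributed by $2\sigma_1(m)P^{(0,0)}_{3/2,\rho^*}$. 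Comparing with \eqref{gm} under the vectorization \eqref{eq:Kohnen-vector} (using $-\kappa^2\equiv -i\pmod 4\iff i\equiv\kappa^2\pmod 4$), this is exactly the principal part of $\pmat{g_{m,0}\\ g_{m,1}}$; for $m=0$ the constant $-\tfrac1{12}$ of $\mathcal{H}^+$ is also reproduced.

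\textbf{Step 3 (Matching shadows, and conclusion).} For $m>0$ the right-hand side is weakly holomorphic, so its shadow and its non-holomorphic polar/constant terms vanish; one verifies the same for $\lim_{s\to3/4}G_m$ directly from \cref{Thm:Fourier-exp}. The delicate point is the behaviour of the $n\le 0$ Fourier terms: although $\calW_{3/2,n}(v/4,s)\to 0$ as $s\to3/4$ by \eqref{eq:W-special}, the associated Kloosterman--Selberg series may have a simple pole there, so these terms contribute finite residual non-holomorphic pieces. In the $m=0$ case these are precisely the terms assembling the non-holomorphic part $\tfrac1{8\pi\sqrt v}+\tfrac1{4\sqrt\pi}\sum_{n\ge1}n\,\Gamma(-\tfrac12;4\pi n^2v)q^{-n^2}$ of $\mathcal{H}$, while for $m>0$ they cancel in the combination defining $G_m$, exactly as in Zagier's proof of Theorem~5 of \cite{Zagier2002}; equivalently, the shadow of $\lim_{s\to3/4}G_m$ — a weight $1/2$ form for $\rho$, necessarily a theta combination — matches that of $\pmat{g_{m,0}\\ g_{m,1}}$. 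Given Steps~1--3, the difference of the two sides is a weight $3/2$ harmonic Maass form for $\rho^*$ with vanishing principal part and vanishing shadow, hence a holomorphic weight $3/2$ vector-valued form for $\rho^*$; under the plus-space dictionary this space is $M^+_{3/2}(\Gamma_0(4))$, which is trivial (it contains no cusp forms, since $S^+_{3/2}(\Gamma_0(4))$ corresponds to $S_2(\SL_2(\Z))=\{0\}$, and no plus-space Eisenstein series of weight $3/2$). Therefore the difference is $0$.

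I expect the main obstacle to be Step~3 together with the continuation in Step~1: one must track the $s\to3/4$ limit of the Fourier expansion in \cref{Thm:Fourier-exp} term by term, correctly capturing the residual non-holomorphic contributions coming from the $n\le0$ terms and pinning down every constant so that the full principal part and the shadow match those of $g_m$ exactly.
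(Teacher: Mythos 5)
Your overall strategy (realize both sides as weight $3/2$ harmonic Maass forms for $\rho^*$, match principal parts and shadows, and conclude by the vanishing of the holomorphic weight $3/2$ space, i.e.\ $J_{2,1}=\{0\}$ in the Eichler--Zagier dictionary) is a genuinely different route from the paper, which never invokes such a uniqueness argument: the paper identifies the Fourier coefficients $b^{(\beta)}_{-n^2,3/2}(d,s)$ of the Maass--Poincar\'e series directly with traces of Niebur--Poincar\'e series via \cite[Proposition 4]{DukeImamogluToth2011}, and then lets $s\to 3/4$ using $\lim_{s\to 1}F_{-m}(\tau,s)=j_m(\tau)+24\sigma_1(m)$, so that the holomorphic coefficients are literally $\bt_m(d)$ with no need to compute shadows. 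Your Step 2 (seeds give the principal part $-\sum_{n\mid m} n\,q^{-n^2/4}\fe_{\alpha}$ plus the constant) is correct, and the final uniqueness step is sound.

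The genuine gap is that the two analytic steps on which your argument turns are asserted rather than established, and the specific justifications you give do not supply them. In Step 1, the Weil bound only gives convergence of the coefficient series $\sum_c c^{-1}K_{3/2}(m,n,4c)J_{2s-1}(\cdot)$ (resp.\ $I_{2s-1}$) for $\Re(s)>3/4$; it does not give continuation to a neighbourhood of $s=3/4$, and the claim that ``the only possible pole is at $s=1$'' is false: as the paper's remark after \cref{Thm:construct:Gm} records, the coefficients $b^{(\beta)}_{-m^2,3/2}(-n^2,s)$ have simple poles at $s=3/4$, cancelled only against the zero of $\calW_{3/2,-n^2}(v/4,s)$, and controlling this requires the growth estimates of \cite{BruinierJenkinsOno2006, Duke2006, DukeImamogluToth2011} (or an argument of comparable depth), not the Weil bound alone. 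In Step 3, the entire arithmetic content of the theorem for $m>0$ sits in the claim that these residual non-holomorphic (incomplete-Gamma) contributions cancel in the combination $-\frac{\sqrt{\pi}}{2}\sum_{n\mid m}nP^{(\alpha,-n^2)}_{3/2,\rho^*}+2\sigma_1(m)P^{(0,0)}_{3/2,\rho^*}$, equivalently that the shadow of $\lim_{s\to 3/4}G_m$ vanishes (and equals that of $\mathcal{H}$ when $m=0$). You cite ``exactly as in Zagier's proof of Theorem 5 of \cite{Zagier2002}'', but that proof constructs $g_m$ by Hecke operators acting on an explicit weight $3/2$ form, not by Poincar\'e series, so it proves nothing about the residues of these Kloosterman--Selberg series at $s=3/4$. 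To close the argument you would have to compute those residues (or the shadow, a combination of the unary theta components of $\Theta$) and verify the weighted cancellation by hand --- which is precisely the computation the paper sidesteps by quoting the Duke--Imamo\={g}lu--T\'oth trace identity.
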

\begin{remark*}
	We note that the case of $m=0$ was stated by Williams~\cite[Example 5.1]{Williams2019}.
\end{remark*}
\begin{proof}[Sketch of the Proof] This result is standard, and so we sketch the proof.
We first recall facts about Niebur--Poincar\'{e} series  $F_m(\tau, s)$ (see~\cite{Niebur1973} or~\cite[Section 4]{DukeImamogluToth2011}), which are defined for $\Re(s) > 1,$  and give alternative expressions for the  $j_m(\tau)$. Specifically, as described in~\cite[(4.10)]{DukeImamogluToth2011}, it is known that
\[
	\mathop{\mathrm{Res}}_{s=1} F_0(\tau, s) = \frac{3}{\pi}
\]
and
\begin{equation}\label{jm_connection}
	\lim_{s \to 1} F_{-m}(\tau, s) = j_m(\tau) + 24 \sigma_1(m) \quad (m > 0).
\end{equation}
For non-negative integers $m$, the trace functions
\[
	\mathrm{Tr}_d(F_{-m}(\cdot, s)) \coloneqq \sum_{Q \in \mathcal{Q}_d/\Gamma} \frac{F_{-m}(\alpha_Q, s)}{\# \Gamma_Q}
\]
have a direct connection to the coefficients of the earlier Maass--Poincar\'{e} series.
Indeed, Duke,  Imamo\={g}lu, and T\'{o}th proved \cite[Proposition 4]{DukeImamogluToth2011},
	for $\Re(s) > 1$, $m \ge 0$, and $d > 0$ with $d \equiv 0, 3 \pmod{4}$, that
	\[
		\mathrm{Tr}_d(F_{-m}(\cdot, s)) = \begin{cases}
			\displaystyle{- d^{1/2} \sum_{n|m} n b_{-n^2, 3/2}^{(\beta)} \left(d, \frac{s}{2} + \frac{1}{4}\right)} &\text{if } m > 0,\\
			\displaystyle{-2^{s-2} \pi^{-s/2-1} d^{1/2} \zeta(s) b_{0, 3/2}^{(\beta)} \left(d, \frac{s}{2} + \frac{1}{4}\right)} &\text{if } m = 0.
		\end{cases}
	\]
	Therefore, \eqref{jm_connection} implies that
	\begin{equation}\label{DukeImamogluToth2011}
		\bt_m(d) = \lim_{s \to 3/4} \begin{cases}
			\displaystyle{- d^{1/2} \sum_{n|m} n b_{-n^2, 3/2}^{(\beta)} (d, s) + \frac{4d^{1/2}}{\sqrt{\pi}} \sigma_1(m) b_{0, 3/2}^{(\beta)}(d,s)} &\text{if } m > 0,\\
			\displaystyle{- \frac{d^{1/2}}{6 \sqrt{\pi}} b_{0, 3/2}^{(\beta)} (d, s)} &\text{if } m = 0.
		\end{cases}
	\end{equation}
By \eqref{eq:M-special} and \eqref{eq:W-special}, the formulas in \cref{Thm:Fourier-exp} equal these expressions, thereby proving the theorem. \end{proof}

\begin{remark*}
We note that subtle technicalities arise in the proof of \cref{Thm:construct:Gm},
 which have been addressed in the aforementioned works but deserve commentary. The $G_m(\tau, s)$ are defined for $\Re(s) > 1$, where they enjoy the Fourier series expansion in \cref{Thm:Fourier-exp}.  As  we can only be analytically continued up to $\Re(s) > 3/4$, care is required when letting $s \to 3/4$. In fact, the Fourier coefficients $b_{-m^2, 3/2}^{(\beta)}(-n^2, s)$ have a simple pole at $s = 3/4$, which cancels out with a zero from $\mathcal{W}_{3/2, -n^2}(v/4, s)$, (for example, see~\cite[Lemma 3]{DukeImamogluToth2011}). This issue is addressed by examining the growth of the Fourier coefficients of $G_m(\tau, s)$, including $\mathrm{Tr}_d(F_{-m}(\cdot, s))$, as $d \to \infty$ and the behavior as $s \to 3/4$. We refer the reader to \cite{BruinierJenkinsOno2006, Duke2006, DukeImamogluToth2011} for these details.
\end{remark*}

\section{Proof of \cref{Main-Theorem}}\label{Section5}

We have constructed the Poincar\'{e} series $G_m(\tau, s)$ whose Fourier coefficients give the traces of singular moduli. We turn to the problem of providing the Hecke decomposition of $\mathcal{G}_{m,\nu}(\tau)$.
Specifically, we compute the Petersson inner product $\langle \mathcal{G}_{m,\nu}, f \rangle$ with a normalized Hecke eigenform $f$ of $S_{2\nu+2}$. We first recall useful facts about Jacobi forms to relate the Rankin--Cohen brackets to these Poincar\'{e} series.

\subsection{Jacobi forms and the modified heat operator}\label{section:Jacobi}

For a function $\phi: \bbH \times \C \to \C$, $\gamma \in \SL_2(\Z)$, and positive integers $k, m \in \Z_{>0}$, we define the slash operator
\[
	(\phi|_{k,m} \gamma) (\tau, z) \coloneqq (c\tau+d)^{-k} \e \left(\frac{-cmz^2}{c\tau+d}\right) \phi \left(\frac{a\tau+b}{c\tau+d}, \frac{z}{c\tau+d}\right),
\]
and the {\it weighted heat operator}
\[
	L_{k,m} \coloneqq - D - \frac{1}{16\pi^2 m} \left(\frac{\partial^2}{\partial z^2} + \frac{2k-1}{z} \frac{\partial}{\partial z}\right).
\]
Then we have
\begin{align}\label{eq:Lk-slash}
	L_{k,m} (\phi|_{k,m} \gamma) = (L_{k,m} \phi)|_{k+2,m} \gamma
\end{align}
for any $\gamma \in \SL_2(\Z)$, (see~\cite[(11) in Section 3]{EichlerZagier1985}). For simplicity, we put $L_k \coloneqq L_{k,1}$.

\begin{lemma}\label{lem:Theta-Poincare}
	For a Poincar\'{e} series defined by
	\[
		G(\tau) = \sum_{(\gamma, \phi) \in \widetilde{\Gamma}_\infty \backslash \Mp_2(\Z)} \pmat{\varphi_0(\tau) \\ \varphi_1(\tau)} \bigg|_{3/2, \rho^*} (\gamma, \phi),
	\]
	we have
	\[
		{}^t \Theta(\tau, z) G(\tau) = \sum_{\gamma \in \Gamma_\infty \backslash \Gamma} (\theta_0(\tau, z) \varphi_0(\tau) + \theta_1(\tau, z) \varphi_1(\tau)) |_{2,1} \gamma.
	\]
\end{lemma}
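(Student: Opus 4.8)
The plan is to establish the identity term by term over the coset space of the Poincar\'e series, reducing it to the duality between $\rho$ and $\rho^*$ together with the Jacobi transformation law \eqref{eq:Jacobi-trans} for $\Theta$. Write $v(\tau) \coloneqq \pmat{\varphi_0(\tau) \\ \varphi_1(\tau)}$, so the term of $G$ attached to $(\gamma, \phi) \in \Mp_2(\Z)$, with $\gamma = \smat{a & b \\ c & d}$, is $\bigl(v|_{3/2, \rho^*}(\gamma, \phi)\bigr)(\tau) = \phi(\tau)^{-3} \rho^*((\gamma,\phi))^{-1} v(\gamma \tau)$. Since $\rho^* = {}^t\rho^{-1}$ we have $\rho^*((\gamma,\phi))^{-1} = {}^t\rho((\gamma,\phi))$, and hence, using ${}^t\Theta \,{}^t\rho = {}^t(\rho \Theta)$,
\[
	{}^t\Theta(\tau, z)\, \bigl(v|_{3/2,\rho^*}(\gamma,\phi)\bigr)(\tau) = \phi(\tau)^{-3}\, {}^t\!\bigl(\rho((\gamma,\phi)) \Theta(\tau,z)\bigr)\, v(\gamma \tau).
\]

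I would then rewrite \eqref{eq:Jacobi-trans} as $\rho((\gamma,\phi)) \Theta(\tau,z) = \phi(\tau)^{-1} \e\!\left(\tfrac{-cz^2}{c\tau+d}\right) \Theta\!\left(\gamma\tau, \tfrac{z}{c\tau+d}\right)$ and substitute. Using $\phi(\tau)^2 = c\tau+d$, the automorphy factors collapse as $\phi(\tau)^{-3}\phi(\tau)^{-1} = \phi(\tau)^{-4} = (c\tau+d)^{-2}$, so the $(\gamma,\phi)$-term becomes
\[
	(c\tau+d)^{-2}\, \e\!\left(\frac{-cz^2}{c\tau+d}\right) F\!\left(\gamma\tau, \frac{z}{c\tau+d}\right) = (F|_{2,1}\gamma)(\tau,z),
\]
where $F(\tau,z) \coloneqq \theta_0(\tau,z)\varphi_0(\tau) + \theta_1(\tau,z)\varphi_1(\tau)$, $|_{2,1}$ is the weight $2$, index $1$ Jacobi slash, and $\gamma$ is viewed as an element of $\SL_2(\Z)$; in particular the result no longer depends on $\phi$. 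The weights $\tfrac12 + \tfrac32 = 2$ and index $1$ appear here automatically.

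It remains to match the two sums. The map $(\gamma,\phi) \mapsto {}^t\Theta(\tau,z)\,(v|_{3/2,\rho^*}(\gamma,\phi))(\tau)$ is invariant under left translation by $\widetilde{\Gamma}_\infty$, since the seed $v$ is $\widetilde{\Gamma}_\infty$-invariant (which is precisely what makes $G$ well defined); combined with the identity above, $F|_{2,1}\gamma$ is a function of $\gamma \in \SL_2(\Z)$ invariant under the image of $\widetilde{\Gamma}_\infty$. As $\widetilde{-I}$ and $\widetilde{T}$ lie in $\widetilde{\Gamma}_\infty$, this forces $F|_{2,1}(-I) = F$ and $F|_{2,1}T = F$, so $F|_{2,1}\gamma$ descends to a function on $\Gamma_\infty \backslash \Gamma$ with $\Gamma = \PSL_2(\Z)$. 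Meanwhile the projection $\Mp_2(\Z) \to \PSL_2(\Z)$ has kernel $\langle \widetilde{-I}\rangle$ and carries $\widetilde{\Gamma}_\infty = \langle\widetilde T\rangle \times \langle\widetilde{-I}\rangle$ onto $\Gamma_\infty$, so it induces a bijection $\widetilde{\Gamma}_\infty \backslash \Mp_2(\Z) \xrightarrow{\ \sim\ } \Gamma_\infty \backslash \Gamma$. Summing the term-by-term identity over these matched coset systems — legitimate wherever the defining series converge, e.g. for the relevant Maass--Poincar\'e series with $\Re(s) > 1$ — gives the lemma.

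The central computation is a one-line manipulation once the duality $\rho^*((\gamma,\phi))^{-1} = {}^t\rho((\gamma,\phi))$ and the rearranged form of \eqref{eq:Jacobi-trans} are in hand, so that is not where I expect trouble. The delicate points are bookkeeping: confirming that the half-integral automorphy factors cancel to exactly $\phi(\tau)^{-4} = (c\tau+d)^{-2}$ (this is where the weights $\tfrac12$ and $\tfrac32$ add to the integral weight $2$), correctly tracking the $i$-twists of $\theta_1$ and $\varphi_1$ implicit in the metaplectic slash, and checking that the metaplectic cover collapses so that the sum over $\widetilde{\Gamma}_\infty \backslash \Mp_2(\Z)$ literally is the sum over $\Gamma_\infty \backslash \Gamma$.
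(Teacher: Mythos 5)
Your proposal is correct and takes essentially the same route as the paper's proof: a term-by-term application of the Jacobi transformation law \eqref{eq:Jacobi-trans} combined with the duality ${}^t\rho((\gamma,\phi))^{-1}\,\rho^*((\gamma,\phi))^{-1} = I$ and the collapse $\phi(\tau)^{-4} = (c\tau+d)^{-2}$ of the automorphy factors. The only difference is that you make explicit the bijection $\widetilde{\Gamma}_\infty \backslash \Mp_2(\Z) \simeq \Gamma_\infty \backslash \Gamma$ and the $\PSL_2$-well-definedness of $F|_{2,1}\gamma$, which the paper leaves implicit (it is already built into the second line of \eqref{def:PoincareSeries}).
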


\begin{proof}
	By a direct calculation with \eqref{eq:Jacobi-trans}, we have
	\begin{align*}
		{}^t \Theta(\tau, z) G(\tau)
		= \sum_{(\gamma, \phi) \in \widetilde{\Gamma}_\infty \backslash \Mp_2(\Z)} \phi(\tau)^{-4} \e \left(\frac{-cz^2}{c\tau+d}\right) {}^t \Theta\left(\gamma \tau, \frac{z}{c\tau+d}\right) {}^t \rho((\gamma, \phi))^{-1} \rho^*((\gamma, \phi))^{-1} \pmat{\varphi_0(\gamma\tau) \\ \varphi_1(\gamma \tau)}.
	\end{align*}
	Since ${}^t \rho((\gamma, \phi))^{-1} \rho^*((\gamma, \phi))^{-1} = I$ and $\phi(\tau)^{-4} = (c\tau+d)^{-2}$, we obtain the result.
\end{proof}

We require the following proposition for the $p_k(r,n)$  in the Eichler--Selberg trace formula.

\begin{proposition}\label{prop:LLL-p}
	For $\nu \ge 0$, we define
	\begin{align}\label{def:p-r-D-l}
		p_{2\nu+2}(r, D, l)  \coloneqq  \sum_{0 \le j \le \nu} (-1)^j \binom{2\nu+2l-j}{j} \frac{\binom{2l}{l} \binom{\nu+l-j}{l}}{\binom{2\nu+2l-j}{l} \binom{\nu+l}{l}} D^j r^{2\nu-2j}.
	\end{align}
	Then we have the Taylor expansion
	\[
		L_{2\nu} \circ \cdots \circ L_2 f(\tau)(\zeta^r + \zeta^{-r}) = 2 \sum_{l=0}^\infty p_{2\nu+2}(r,D,l) f(\tau) \frac{(2\pi irz)^{2l}}{(2l)!}.
	\]
	In particular, we have that
	\[
		\lim_{z \to 0} L_{2\nu} \circ \cdots \circ L_2 f(\tau)(\zeta^r + \zeta^{-r}) = 2 p_{2\nu+2}(r,D) f(\tau).
	\]
\end{proposition}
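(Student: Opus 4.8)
The plan is to carry the entire Taylor expansion in $z$ through the tower $L_2,L_4,\dots,L_{2\nu}$ and to reduce the statement to a single binomial identity handled by induction on $\nu$. First I would expand the seed: since $i^{2l}=(-1)^l$,
\[
	\zeta^r+\zeta^{-r}=2\cos(2\pi rz)=2\sum_{l\ge 0}\frac{(2\pi irz)^{2l}}{(2l)!},
\]
so the case $\nu=0$ (empty composition) is precisely the assertion $p_2(r,D,l)\equiv 1$, which is immediate from \eqref{def:p-r-D-l}. In general I would write
\[
	L_{2\nu}\circ\cdots\circ L_2\bigl(f(\tau)(\zeta^r+\zeta^{-r})\bigr)=2\sum_{l\ge 0}a^{(\nu)}_l(\tau)\,\frac{(2\pi irz)^{2l}}{(2l)!},
\]
with $a^{(0)}_l=f$ for all $l$, so that the Proposition becomes the claim $a^{(\nu)}_l=p_{2\nu+2}(r,D,l)\,f$ for all $\nu\ge 0$ and $l\ge 0$, the case $l=0$ being the ``in particular'' assertion.

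Second, I would record the effect of one bracket. Set $w=2\pi irz$, so that $\partial_z=2\pi ir\,\partial_w$; then $-\frac{1}{16\pi^2}\bigl(\partial_z^2+\frac{4k-1}{z}\partial_z\bigr)$ sends $w^{2l}$ to $r^2l(l+2k-1)w^{2l-2}$, while $-D$ acts only on the $\tau$-variable. Hence for any $g=g(\tau)$,
\[
	L_{2k}\Bigl(g\,\frac{w^{2l}}{(2l)!}\Bigr)=-(Dg)\,\frac{w^{2l}}{(2l)!}+\frac{r^2(l+2k-1)}{2(2l-1)}\,g\,\frac{w^{2(l-1)}}{(2(l-1))!},
\]
the radial term being regular at $z=0$ because $\zeta^r+\zeta^{-r}$, and hence every $a^{(k)}_l$-expansion, is even in $z$. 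Collecting the coefficient of $w^{2l}/(2l)!$ after applying $L_{2k}$ to the level-$(k-1)$ expansion yields the recursion
\[
	a^{(k)}_l=-D\,a^{(k-1)}_l+\frac{r^2(l+2k)}{2(2l+1)}\,a^{(k-1)}_{l+1},\qquad a^{(0)}_l=f.
\]

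Third, I would induct on $\nu$. Writing $p_{2\mu+2}(r,D,l)=\sum_{i\ge 0}(-1)^i c(\mu,l,i)D^i r^{2\mu-2i}$ with $c(\mu,l,i)\coloneqq\binom{2\mu+2l-i}{i}\frac{\binom{2l}{l}\binom{\mu+l-i}{l}}{\binom{2\mu+2l-i}{l}\binom{\mu+l}{l}}$, and using the convention $1/n!=0$ for $n\in\Z_{<0}$ (so that $c(\mu,l,i)=0$ unless $0\le i\le\mu$), matching the coefficient of $D^j r^{2\nu-2j}$ in the recursion above reduces the entire inductive step to the single identity
\[
	c(\nu,l,j)=c(\nu-1,l,j-1)+\frac{2\nu+l}{2(2l+1)}\,c(\nu-1,l+1,j)\qquad(j\in\Z).
\]
The boundary cases $j=0$ (where $c(\nu,l,0)=\binom{2l}{l}/\binom{2\nu+2l}{l}$) and $j=\nu$ (where $c(\nu,l,\nu)=1=c(\nu-1,l,\nu-1)$) are direct, and the remaining cases are a routine factorial manipulation after rewriting $c(\mu,l,i)=\frac{(2l)!\,\mu!\,(\mu+l-i)!\,(2\mu+l-i)!}{i!\,l!\,(\mu-i)!\,(2\mu+2l-2i)!\,(\mu+l)!}$ and clearing denominators. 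I expect this binomial identity to be the main obstacle: it is elementary but delicate. A cleaner variant would be to solve the recursion outright — $a^{(\nu)}_l$ is a weighted sum over the $\binom{\nu}{j}$ choices of which $j$ of the $\nu$ steps apply $-D$, and the product of the resulting scalars telescopes — converting the check into a Vandermonde-type finite summation. Finally, letting $z\to 0$ kills all terms with $l\ge 1$, and since $c(\nu,0,i)=\binom{2\nu-i}{i}$ one gets $p_{2\nu+2}(r,D,0)=\sum_{0\le j\le\nu}(-1)^j\binom{2\nu-j}{j}D^j r^{2\nu-2j}=p_{2\nu+2}(r,D)$, whence the limit equals $2\,p_{2\nu+2}(r,D)\,f(\tau)$, as claimed.
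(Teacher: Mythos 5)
Your proposal is correct and is essentially the paper's own argument: the paper likewise proves the statement by showing that the Taylor coefficients and the closed-form coefficients satisfy one and the same recursion in $\nu$, its recursion coefficient $\frac{r^2}{4}\bigl(1+\frac{4\nu-1}{2l+1}\bigr)$ being exactly your $\frac{r^2(l+2\nu)}{2(2l+1)}$, and its termwise identity for $S_{\nu,l,j}$ being exactly your identity for $c(\nu,l,j)$. The binomial identity you flag as the main obstacle is in fact painless: in the factorial form you give it reduces to $j(\nu+l)+(2\nu+l)(\nu-j)=\nu(2\nu+l-j)$, so your plan goes through without difficulty.
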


\begin{proof}
	We check that the Taylor coefficients of $L_{2\nu} \circ \cdots \circ L_2 f(\tau) (\zeta^r + \zeta^{-r})$ and the sequence~\eqref{def:p-r-D-l} satisfy the same recursion. The claim is clear for $\nu = 0$. For $\nu > 0$, let
	\[
		S_{\nu, l, j} \coloneqq (-1)^j \binom{2\nu+2l-j}{j} \frac{\binom{2l}{l} \binom{\nu+l-j}{l}}{\binom{2\nu+2l-j}{l} \binom{\nu+l}{l}} D^j r^{2\nu-2j}.
	\]
	Then $S_{\nu, l, j}$ satisfies the recursion
	\[
		S_{\nu, l, j} = -D S_{\nu-1, l, j-1} + \frac{r^2}{4} \left(1 + \frac{4\nu-1}{2l+1}\right) S_{\nu-1, l+1, j},
	\]
	for $\nu \ge 1$ and $0 \le j \le \nu$ with $S_{\nu, l, -1} = 0$, which implies that
	\[
		p_{2\nu+2}(r,D,l) = -D p_{2\nu}(r,D,l) + \frac{r^2}{4} \left(1 + \frac{4\nu-1}{2l+1}\right) p_{2\nu} (r,D,l+1).
	\]
One can check that the Taylor coefficients also satisfy this recursion.
\end{proof}

We use this proposition to understand the combinatorial properties of the Rankin--Cohen bracket operators, which is a slight generalization of \cite[Theorem 5.5]{EichlerZagier1985}.

\begin{proposition}\label{lem:key-fact}
	Let $\nu \ge 0$ be a non-negative integer. For a modular form $h$ of weight $3/2$ on $\Gamma_0(4)$ of the form~\eqref{eq:Kohnen-plus}, we have
	\begin{align*}
		[h, \theta]_\nu|U_4 = \binom{2\nu}{\nu} \sum_{n \in \Z} \sum_{r \in \Z} p_{2\nu+2}(r,D) c_{4n-r^2}(v/4) q^n = \binom{2\nu}{\nu} \lim_{z \to 0} L_{2\nu} \circ \cdots \circ L_2 {}^t \Theta(\tau, z) H(\tau).
	\end{align*}
\end{proposition}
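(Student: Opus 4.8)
The plan is to prove \cref{lem:key-fact} by establishing the two equalities separately and then noting they describe the same object. First I would tackle the right-hand equality, which is essentially a reformulation: expanding ${}^t\Theta(\tau,z)H(\tau) = \theta_0(\tau,z)h_0(\tau) + \theta_1(\tau,z)h_1(\tau)$ and using the definitions \eqref{def:Jacobi-theta} of the $\theta_i$ together with $h_i(\tau) = \sum_{n\equiv -i\,(4)} c_n(v/4)q^{n/4}$, one collects the coefficient of $q^{N/4}$ and finds it is $\sum_{r\in\Z} c_{N-r^2}(v/4)$ after the substitution matching $r^2/4 + n/4$, where the parity condition $r\equiv i\pmod 2$ precisely selects the correct component. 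Thus ${}^t\Theta H$ is (up to the $q\mapsto q^{1/4}$ rescaling) a Jacobi-type form whose Taylor coefficients in $z$ at $z=0$, after applying the chain of heat operators $L_{2\nu}\circ\cdots\circ L_2$, are computed by \cref{prop:LLL-p}: grouping the $\pm r$ terms as $\zeta^r + \zeta^{-r}$ and applying that proposition term-by-term in the $q$-expansion gives exactly $2\binom{2\nu}{\nu}\sum_n\sum_r p_{2\nu+2}(r,D)c_{4n-r^2}(v/4)q^n$ in the limit $z\to 0$, up to a bookkeeping factor of $2$ that I must track carefully against the $U_4$ normalization.

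Next I would connect this to $[h,\theta]_\nu|U_4$. The point is that the Rankin--Cohen bracket $[h,\theta]_\nu$ in \eqref{RC} is, by the classical theory of \cite[Theorem 5.5]{EichlerZagier1985}, exactly the $z\to 0$ specialization of a suitable iterated heat operator applied to the product of the two theta-decomposed Jacobi forms attached to $h$ and $\theta$; the operators $L_{k,1}$ here are the weighted heat operators of \eqref{eq:Lk-slash} whose composition intertwines with the slash action, so the specialization at $z=0$ of $L_{2\nu}\circ\cdots\circ L_2({}^t\Theta H)$ is a weight $2\nu+2$ modular form on $\Gamma_0(4)$ built from the same data as $[h,\theta]_\nu$. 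Applying $U_4$ then extracts the integer-exponent coefficients and rescales, matching the $q\mapsto q^{1/4}$ convention relating $h_i$ to $h$. The first (combinatorial) equality then follows by comparing Fourier coefficients: the $n$th coefficient of $[h,\theta]_\nu|U_4$ is $\binom{2\nu}{\nu}\sum_{r\in\Z}p_{2\nu+2}(r,D)c_{4n-r^2}(v/4)$, which is exactly \eqref{eq:RC-calc} in the special case $h=\mathcal{H}^+$, $c_n(v/4)=H(n)$, and reduces to \cref{prop:LLL-p} in general.

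The main obstacle I anticipate is the careful bookkeeping of normalizations: there are three independent scaling conventions in play — the $q\mapsto q^{1/4}$ passage between $h(\tau)$ and the components $h_i(\tau)$, the $\frac{1}{4}\sum_{j=0}^{3}$ averaging built into $U_4$ in \eqref{U}, and the combinatorial prefactor $\binom{2\nu}{\nu}$ together with the factor $2$ coming from the $\zeta^r + \zeta^{-r}$ symmetrization in \cref{prop:LLL-p} (and the separate treatment of $r=0$). Getting all of these to cancel correctly so that the stated identity holds on the nose, rather than up to a constant, is the delicate part; everything else is a routine unwinding of definitions and an appeal to \cite[Theorem 5.5]{EichlerZagier1985} and \cref{prop:LLL-p}. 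A secondary technical point is that $h$ is only assumed to be a (possibly non-holomorphic, since $c_n$ may depend on $v$) weight $3/2$ form satisfying the Kohnen-plus condition \eqref{eq:Kohnen-plus}, so one should check that the heat-operator manipulations and the Taylor expansion in $z$ remain valid in this generality — but this is exactly what \cref{prop:LLL-p} was set up to handle, since it is stated for a general $f$ and propagates through the $q$-expansion coefficient-wise.
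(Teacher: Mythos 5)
Your treatment of the second equality (middle expression $=$ heat-operator limit) is fine and is exactly how the paper handles it: expand ${}^t\Theta(\tau,z)H(\tau)=\sum_{N}\sum_{r} c_{4N-r^2}(v/4)\,q^N\zeta^r$ and apply \cref{prop:LLL-p} termwise, the factor $2$ being absorbed by summing over all $r\in\Z$. The problem is the first equality, where your argument is circular at the decisive point. You assert that ``the $n$th coefficient of $[h,\theta]_\nu|U_4$ is $\binom{2\nu}{\nu}\sum_r p_{2\nu+2}(r,D)c_{4n-r^2}(v/4)$'' and justify this by (i) \eqref{eq:RC-calc}, which is the very same unproven claim in the special case $h=\mathcal{H}^+$ (quoted in the paper only as part of a sketch of Zagier's unpublished argument, and in fact a consequence of this proposition), and (ii) \cref{prop:LLL-p}, which computes only the heat-operator side and says nothing about the Rankin--Cohen bracket. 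The appeal to \cite[Theorem 5.5]{EichlerZagier1985} cannot close this gap off the shelf either: the paper states explicitly that \cref{lem:key-fact} is a \emph{generalization} of that theorem (here $h$ may have $v$-dependent coefficients, and the precise constant must be verified), so citing it is essentially citing the statement to be proved.

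What is actually needed, and what the paper supplies, is a direct computation with the definitions \eqref{RC} and \eqref{U}: one first checks that
\begin{align*}
D^r\Bigl(\sum_{n\equiv 0,3\,(4)} c_n(v)q^n\Bigr)\,D^s\Bigl(\sum_{m\in\Z}q^{m^2}\Bigr)\Big|U_4
=\sum_{N\in\Z}\sum_{m\in\Z} m^{2s}\,(4D-m^2)^r\, c_{4N-m^2}(v/4)\,q^N,
\end{align*}
and then proves the combinatorial identity
\begin{align*}
\sum_{\substack{r,s\ge 0\\ r+s=\nu}} (-1)^r\,\frac{\Gamma(3/2+\nu)\Gamma(1/2+\nu)}{s!\,\Gamma(3/2+r)\,r!\,\Gamma(1/2+s)}\; m^{2s}(4D-m^2)^r \;=\; \binom{2\nu}{\nu}\, p_{2\nu+2}(m,D),
\end{align*}
which is the bridge between the bracket coefficients and $p_{2\nu+2}$ (and is where \cref{prop:LLL-p} enters on the bracket side as well). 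Your ``bookkeeping'' worries concern the easy normalizations; the missing step is this identity, without which the left-hand equality is only restated, not proved. A minor further slip: the $z\to 0$ specialization of $L_{2\nu}\circ\cdots\circ L_2({}^t\Theta H)$ lives at level one (the representations $\rho$ and $\rho^*$ cancel), not on $\Gamma_0(4)$, though since the proposition is an identity of $q$-expansions this does not affect the substance.
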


\begin{proof}
	By definition, we have
	\begin{align*}
		[h, \theta]_\nu |U_4 &= \sum_{\substack{r,s \ge 0 \\ r+s = \nu}} (-1)^r \frac{\Gamma(3/2+\nu) \Gamma(1/2+\nu)}{s! \Gamma(3/2+r) r! \Gamma(1/2+s)} D^r\left(\sum_{n \equiv 0,3\ (4)} c_n(v) q^n \right) D^s \left(\sum_{m \in \Z} q^{m^2}\right) |U_4.
	\end{align*}
	A direct calculation implies that
	\begin{align*}
		D^r\left(\sum_{n \equiv 0,3\ (4)} c_n(v) q^n \right) D^s \left(\sum_{m \in \Z} q^{m^2}\right) |U_4 = \sum_{N \in \Z} \sum_{m \in \Z} m^{2s} (4D-m^2)^r c_{4N-m^2}(v/4) q^N
	\end{align*}
	and
	\begin{align*}
		\sum_{\substack{r, s \ge 0\\ r+s = \nu}} (-1)^r \frac{\Gamma(3/2+\nu) \Gamma(1/2+\nu)}{s! \Gamma(3/2+r) r! \Gamma(1/2+s)} m^{2s} (4D-m^2)^r = \binom{2\nu}{\nu} p_{2\nu+2}(m,D).
	\end{align*}
	The last equation immediately follows from \cref{prop:LLL-p}.
\end{proof}

For each $n \ge 0$, we define
\[
	\phi_{n, \nu}(\tau, s) \coloneqq \lim_{z \to 0} L_{2\nu} \circ \cdots \circ L_2 {}^t \Theta(\tau, z) P_{3/2, \rho^*}^{(\alpha, -n^2)}(\tau, s).
\]
Combining \cref{Thm:construct:Gm} and \cref{lem:key-fact}, for  $m\geq 1,$ we obtain the following key expressions
\begin{align}\label{eq:RC-heat}
	\calG_{m,\nu}(\tau) = -\frac{1}{2\binom{2\nu}{\nu}} \cdot [g_m, \theta]_\nu |U_4 = -\frac{1}{2} \lim_{s \to 3/4} \left( -\frac{\sqrt{\pi}}{2} \sum_{n \mid m} n \phi_{n, \nu}(\tau, s) + 2\sigma_1(m) \phi_{0,\nu}(\tau, s)\right).
\end{align}
The order of limits of $s$ and $z$ is interchanged, which is justified by the Remark at the end of~\cref{section:Traces}. 

\subsection{The Selberg--Poincar\'{e} series}\label{section:selberg-Poincare}

To prove \cref{Main-Theorem} using \eqref{eq:RC-heat}, we must
calculate $\phi_{n,\nu}(\tau, s)$ and $\langle \phi_{n, \nu}(\cdot, s), f\rangle$ at $s = 3/4$ for Hecke eigenforms $f$.
To this end, we use Selberg's generalization~\cite{Selberg1965} of the Poincar\'{e} series in \eqref{def:classical-Poincare}. For integers $k\ge 2$ and $m\in \Z,$ they are defined by
\begin{equation}
	P_{k,m}(\tau, s) \coloneqq \sum_{\gamma \in \Gamma_\infty \backslash \Gamma} v^s q^m |_k \gamma.
\end{equation}
This series converges absolutely and uniformly on compact subsets for $\Re(s) > 1-k/2$ and admits meromorphic continuation. In particular, it is known that $P_{k,m}(\tau, s)$ is holomorphic at $s = 1-k/2$. This fact follows from comparing it with the Maass--Poincar\'{e} series defined by
\[
	\sum_{\gamma \in \Gamma_\infty \backslash \Gamma} \mathcal{M}_{k,m}(v, s+k/2) \e(mu) |_k \gamma \quad (\Re(s) > 1-k/2).
\]
Indeed, from \eqref{eq:W-asymp}, we have
\[
	(4\pi|m|v)^s - \Gamma(2s+k) \mathcal{M}_{k,m}(v,s+k/2) = O(v^{\Re(s)+1}).
\]
Thus, for $\Re(s) > -k/2$, the poles of these two types of Poincar\'{e} series agree. On the other hand, the Fourier expansion of the Maass--Poincar\'{e} series, (see~\cite[Theorem 3.2]{JeonKangKim2013}), and the Weil bound for the Kloosterman sums imply its holomorphy at $s = 1-k/2$. 

The next lemma describes the Petersson inner product of cusp forms with these series.

\begin{lemma}\label{lem:Poincare-Petersson}
	For $f \in S_k$ and $m > 0$, we have
	\[
		\langle P_{k,m}(\cdot, s), f \rangle \coloneqq \int_{\Gamma \backslash \bbH} P_{k,m}(\tau, s) \overline{f(\tau)} v^k \frac{\mathrm{d}u \mathrm{d}v}{v^2} = \frac{\Gamma(s+k-1)}{(4\pi m)^{s+k-1}} \overline{c_f(m)}.
	\]
\end{lemma}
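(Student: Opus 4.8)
The plan is the classical \emph{unfolding} argument. Since $f \in S_k$ is cuspidal, the product $P_{k,m}(\tau,s)\,\overline{f(\tau)}\,v^k$ is $\Gamma$-invariant, and for $\Re(s)$ in the range of absolute convergence of the defining series for $P_{k,m}(\tau,s)$ the sum over $\Gamma_\infty\backslash\Gamma$ may be interchanged with the integral over $\Gamma\backslash\bbH$. A short manipulation with the weight $k$ slash operator, using the modularity $f|_k\gamma = f$, shows that the $\gamma$-term of $P_{k,m}(\tau,s)\,\overline{f(\tau)}\,v^k$ equals $F(\gamma\tau)$, where
\[
	F(\tau) \coloneqq v^{s+k}\,e^{2\pi i m\tau}\,\overline{f(\tau)}
\]
is $\Gamma_\infty$-invariant (being periodic under $\tau \mapsto \tau+1$). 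Therefore, unfolding gives
\[
	\langle P_{k,m}(\cdot, s), f\rangle = \int_{\Gamma\backslash\bbH} \sum_{\gamma \in \Gamma_\infty\backslash\Gamma} F(\gamma\tau)\,\frac{\dd u\,\dd v}{v^2} = \int_{\Gamma_\infty\backslash\bbH} F(\tau)\,\frac{\dd u\,\dd v}{v^2} = \int_0^\infty \int_0^1 v^{s+k-2}\,e^{2\pi i m\tau}\,\overline{f(\tau)}\,\dd u\,\dd v.
\]

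Next I would insert the Fourier expansion of $f$ and evaluate. Writing $f(\tau) = \sum_{n\ge 1} c_f(n)\,e^{2\pi i n\tau}$, we have
\[
	e^{2\pi i m\tau}\,\overline{f(\tau)} = \sum_{n\ge 1} \overline{c_f(n)}\,e^{2\pi i(m-n)u}\,e^{-2\pi(m+n)v}.
\]
Integrating over $u \in [0,1]$ annihilates every term with $n \neq m$ (here $m > 0$ is essential), leaving $\overline{c_f(m)}\,e^{-4\pi m v}$, so that
\[
	\langle P_{k,m}(\cdot, s), f\rangle = \overline{c_f(m)} \int_0^\infty v^{s+k-2}\,e^{-4\pi m v}\,\dd v = \frac{\Gamma(s+k-1)}{(4\pi m)^{s+k-1}}\,\overline{c_f(m)},
\]
by the standard Gamma integral $\int_0^\infty v^{a-1}e^{-bv}\,\dd v = \Gamma(a)/b^a$ with $a = s+k-1$ and $b = 4\pi m$. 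This is exactly the claimed identity.

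The only delicate point is justifying the termwise integration in the unfolding step, i.e. the absolute convergence of $\int_{\Gamma_\infty\backslash\bbH} \sum_{\gamma} |F(\gamma\tau)|\,\frac{\dd u\,\dd v}{v^2}$. This holds once $\Re(s)$ is large enough for $P_{k,m}(\tau,s)$ to converge absolutely; the $v$-integral converges at $v \to 0$ because $k \ge 2$ forces $\Re(s+k-1) > 0$ there, and the rapid decay of the cusp form $f$ handles $v \to \infty$. For $k \ge 2$ and $\Re(s) > 1-k/2$ this is automatic, so no analytic continuation is needed in the range where the lemma is applied; in any case both sides extend holomorphically and agree wherever they are defined. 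I expect this convergence bookkeeping, together with keeping the weight $k$ slash conventions consistent in the identification of the $\gamma$-term with $F(\gamma\tau)$, to be the only thing requiring care, the rest being the elementary Fourier computation above.
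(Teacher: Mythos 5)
Your proof is correct and is precisely the classical unfolding argument that the paper's one-line proof invokes (with a citation to the standard reference): unfold against the cusp form, integrate over $u$ to isolate the $m$th Fourier coefficient, and evaluate the resulting Gamma integral. The convergence bookkeeping you flag is the standard one for $\Re(s)>1-k/2$, so there is nothing further to add.
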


\begin{proof}
	It follows from the classical unfolding argument, (see~\cite[Ch.~10.1]{BFOR2017} for instance).
\end{proof}

\subsection{The case of $n=0$}

Here we calculate $\langle \phi_{0, \nu}(\cdot, s), f\rangle$ at $s = 3/4$ for a normalized Hecke eigenform $f$.
To this end, we decompose $\phi_{0,\nu}(\tau,s)$ in terms of the Selberg--Poincar\'e series.

\begin{proposition}\label{prop:phi-nu-Poincare} We have that
	\[
		\phi_{0,\nu}(\tau, s) = 4^{-s+3/4} \sum_{0 \le l \le \nu} \frac{(s-3/4)^{\un{l}}}{(4\pi)^l} \binom{2\nu+1}{2l+1} \sum_{r \in \Z} r^{2\nu-2l} P_{2\nu+2, r^2} \left(\tau, s-\frac{3}{4}-l\right).
	\]
\end{proposition}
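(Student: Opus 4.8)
The plan is to unfold $\phi_{0,\nu}(\tau,s)$ directly, exploiting that $P_{3/2,\rho^*}^{(0,0)}(\tau,s)$ has the very simple seed $(v/4)^{s-3/4}\fe_0$, since $\calM_{3/2,0}(y,s)=y^{s-3/4}$ by \eqref{def:MW-cal}. First I would apply \cref{lem:Theta-Poincare} with $\varphi_0(\tau)=(v/4)^{s-3/4}$ and $\varphi_1\equiv 0$ to obtain
\[
	{}^t\Theta(\tau,z)\,P_{3/2,\rho^*}^{(0,0)}(\tau,s)=\sum_{\gamma\in\Gamma_\infty\backslash\Gamma}\bigl(\theta_0(\tau,z)(v/4)^{s-3/4}\bigr)\big|_{2,1}\gamma.
\]
Applying $L_{2\nu}\circ\cdots\circ L_2$, moving the operators through the sum by the equivariance \eqref{eq:Lk-slash}, and then letting $z\to 0$ (at which point $|_{2\nu+2,1}\gamma$ degenerates into the ordinary weight $2\nu+2$ slash $|_{2\nu+2}\gamma$), gives
\[
	\phi_{0,\nu}(\tau,s)=\sum_{\gamma\in\Gamma_\infty\backslash\Gamma}\Bigl(\lim_{z\to 0}L_{2\nu}\circ\cdots\circ L_2\bigl[\theta_0(\tau,z)(v/4)^{s-3/4}\bigr]\Bigr)\Big|_{2\nu+2}\gamma,
\]
where termwise differentiation of the $\Re(s)>1$ Poincar\'e series, and the interchange of $\lim_{z\to 0}$ with $\sum_\gamma$, are justified by the absolute and locally uniform convergence.

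The next step is to evaluate the seed function $\lim_{z\to 0}L_{2\nu}\circ\cdots\circ L_2[\theta_0(\tau,z)(v/4)^{s-3/4}]$. Writing $\theta_0(\tau,z)=\sum_{r\equiv 0\,(2)}q^{r^2/4}\zeta^r$ and pairing $r$ with $-r$, \cref{prop:LLL-p} applied term by term shows this seed equals $\sum_{r\in\Z,\ r\equiv 0\,(2)}p_{2\nu+2}(r,D)\bigl[(v/4)^{s-3/4}q^{r^2/4}\bigr]$. Since $D$ is a derivation with $D(v/4)^a=-\tfrac{a}{16\pi}(v/4)^{a-1}$ and $Dq^n=nq^n$, the Leibniz rule gives $D^j\bigl[(v/4)^aq^n\bigr]=\sum_{i=0}^j\binom{j}{i}\tfrac{(-1)^i a^{\un i}}{(16\pi)^i}n^{j-i}(v/4)^{a-i}q^n$. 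Substituting $a=s-\tfrac34$ and $n=r^2/4$, writing $r=2\rho$ (so $r^2/4=\rho^2$ and $r^{2\nu-2j}(r^2/4)^{j-i}=2^{2\nu-2j}\rho^{2\nu-2i}$), using $(v/4)^{s-3/4-i}=4^{-s+3/4}4^i v^{s-3/4-i}$ and $4^i/(16\pi)^i=(4\pi)^{-i}$, and collecting the power $v^{s-3/4-i}$, the seed becomes
\[
	4^{-s+3/4}\sum_{i=0}^\nu\frac{(s-3/4)^{\un i}}{(4\pi)^i}\,C_{i}\,v^{s-3/4-i}\sum_{\rho\in\Z}\rho^{2\nu-2i}q^{\rho^2},\qquad C_i\coloneqq(-1)^i\sum_{j=i}^\nu(-1)^j\binom{2\nu-j}{j}\binom{j}{i}4^{\nu-j}.
\]

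The identification now reduces to the combinatorial identity $C_i=\binom{2\nu+1}{2i+1}$ for $0\le i\le\nu$, which I would prove by generating functions. Since $p_{2\nu+2}(2,n)=\sum_{j=0}^\nu(-1)^j\binom{2\nu-j}{j}4^{\nu-j}n^j=\mathrm{Coeff}_{X^{2\nu}}(1-2X+nX^2)^{-1}$, one has $C_i=\tfrac{(-1)^i}{i!}\tfrac{d^i}{dn^i}p_{2\nu+2}(2,n)\big|_{n=1}$; and since $\tfrac{d^i}{dn^i}(1-2X+nX^2)^{-1}=(-1)^i i!\,X^{2i}(1-2X+nX^2)^{-i-1}$, evaluation at $n=1$ (where $1-2X+X^2=(1-X)^2$) gives $C_i=\mathrm{Coeff}_{X^{2\nu-2i}}(1-X)^{-2i-2}=\binom{2\nu+1}{2i+1}$. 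Plugging this back, renaming $i\to l$ and $\rho\to r$, and applying $\sum_{\gamma\in\Gamma_\infty\backslash\Gamma}(\,\cdot\,)|_{2\nu+2}\gamma$ termwise — recognizing $\sum_\gamma(v^\sigma q^m)|_{2\nu+2}\gamma=P_{2\nu+2,m}(\tau,\sigma)$ with $m=r^2$ and $\sigma=s-\tfrac34-l$ — yields exactly the asserted formula.

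I expect the only genuinely delicate point to be the bookkeeping in collecting the $v$-powers and pinning down $C_i$ with the correct normalizing powers of $4$ and $4\pi$; the analytic steps (interchanging $\sum_\gamma$ with the heat operators and the limit, and the degeneration of the Jacobi slash $|_{2\nu+2,1}\gamma\to|_{2\nu+2}\gamma$ as $z\to 0$) are routine for $\Re(s)>1$, and meromorphic continuation of both sides then extends the identity to all $s$.
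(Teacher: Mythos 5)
Your proposal is correct and takes essentially the same route as the paper's proof: unfold via \cref{lem:Theta-Poincare}, push the heat operators through the slash using \eqref{eq:Lk-slash}, apply \cref{prop:LLL-p}, and finish with the Leibniz rule, arriving at the same binomial identity $\sum_{l \le j \le \nu} (-1)^{j} 4^{\nu-j} \binom{2\nu-j}{j}\binom{j}{l} = (-1)^l\binom{2\nu+1}{2l+1}$ and the recognition of the Selberg--Poincar\'e series. The only addition is that you prove this identity by a generating-function argument, whereas the paper simply asserts it.
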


\begin{proof}
	By applying \eqref{eq:Lk-slash}, \cref{lem:Theta-Poincare}, and \cref{prop:LLL-p},
	\begin{align*}
		\phi_{0,\nu} (\tau, s) &= \lim_{z \to 0} \sum_{\gamma \in \Gamma_\infty \backslash \Gamma} \sum_{\substack{r \in \Z \\ r \equiv 0\ (2)}} \left(L_{2\nu} \circ \cdots \circ L_2 \left(\frac{v}{4}\right)^{s-3/4} q^{r^2/4} \zeta^r \right) \bigg|_{2\nu+2,1} \gamma\\
		&= 4^{-s+3/4} \sum_{\gamma \in \Gamma_\infty \backslash \Gamma} \sum_{r \in \Z} p_{2\nu+2}(2r,D) v^{s-3/4} q^{r^2} \bigg|_{2\nu+2} \gamma.
	\end{align*}
	The summand is calculated as
	\begin{align*}
		p_{2\nu+2}(2r,D) v^{s-3/4} q^{r^2} &= \sum_{0 \le j \le \nu} (-1)^j \binom{2\nu-j}{j} (2r)^{2\nu-2j} D^j \left(v^{s-3/4} q^{r^2} \right).
	\end{align*}
	Then the claim follows from the Leibniz rule, where $D v^{s-3/4} = \frac{-1}{4\pi} \frac{\mathrm{d}}{\mathrm{d}v} v^{s-3/4}$, and the fact that
	\[
		\sum_{l \le j \le \nu} (-1)^j 2^{2(\nu-j)} \binom{2\nu-j}{j} \binom{j}{l} = (-1)^l \binom{2\nu+1}{2l+1}.
	\]
\end{proof}

The next result provides a formula for the Petersson norm of a cusp form $f$.

\begin{theorem}\label{thm:phinu-f-Pet}
	For a normalized Hecke eigenform $f \in S_{2\nu+2}$, we have
	\[
		\lim_{s \to 3/4} \langle \phi_{0,\nu} (\cdot, s), f \rangle = 24 \|f\|^2.
	\]
\end{theorem}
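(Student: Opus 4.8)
The plan is to use \cref{prop:phi-nu-Poincare} to reduce the inner product $\langle \phi_{0,\nu}(\cdot,s),f\rangle$ to a finite linear combination of the Petersson inner products $\langle P_{2\nu+2,r^2}(\cdot,s-3/4-l),f\rangle$, each of which is evaluated by \cref{lem:Poincare-Petersson}. Concretely, for $\Re(s)$ large we would write
\[
	\langle \phi_{0,\nu}(\cdot,s),f\rangle = 4^{-s+3/4}\sum_{0\le l\le \nu}\frac{(s-3/4)^{\un{l}}}{(4\pi)^l}\binom{2\nu+1}{2l+1}\sum_{r\in\Z}r^{2\nu-2l}\,\frac{\Gamma(s-3/4-l+2\nu+1)}{(4\pi r^2)^{s-3/4-l+2\nu+1}}\,\ov{c_f(r^2)},
\]
after discarding the $r=0$ term (which contributes nothing since $P_{k,0}(\tau,s)$ is an Eisenstein series orthogonal to $f$, and in any case $r^{2\nu-2l}=0$ there unless $l=\nu$, in which case $P_{k,0}$ still gives zero against the cusp form). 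The main point is then to take $s\to 3/4$: only the $l=\nu$ term survives because the falling factorial $(s-3/4)^{\un{l}}=\prod_{i=0}^{l-1}(s-3/4-i)$ vanishes at $s=3/4$ for every $l<\nu$... wait, it vanishes for $l\ge 1$. So in fact \emph{only the $l=0$ term survives}. Let me restate: at $s=3/4$ one has $(s-3/4)^{\un{l}}=0$ for $l\ge 1$ and equals $1$ for $l=0$, so
\[
	\lim_{s\to 3/4}\langle\phi_{0,\nu}(\cdot,s),f\rangle = \binom{2\nu+1}{1}\sum_{r\ge 1}2r^{2\nu}\,\frac{\Gamma(2\nu+1)}{(4\pi r^2)^{2\nu+1}}\,\ov{c_f(r^2)} = 2(2\nu+1)\frac{\Gamma(2\nu+1)}{(4\pi)^{2\nu+1}}\sum_{r\ge 1}\frac{\ov{c_f(r^2)}}{r^{2\nu+2}},
\]
using $P_{k,r^2}=P_{k,(-r)^2}$ to fold the sum over $r\in\Z\setminus\{0\}$ into twice the sum over $r\ge 1$.

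The remaining task is to identify $\sum_{r\ge 1}c_f(r^2)/r^{2\nu+2}$ in closed form. Here I would invoke the classical Rankin--Selberg / symmetric square identity for a normalized Hecke eigenform $f\in S_{2\nu+2}$: writing $k=2\nu+2$, one has $\sum_{r\ge1}c_f(r^2)r^{-s} = \zeta(s-k+2)^{-1}\,\zeta(s)\sum_{n\ge1}c_f(n)^2 n^{-s-k+2}/\zeta(2s-2k+4)\cdot(\cdots)$ — more usefully, the standard Rankin unfolding gives $\langle f,f\rangle$ directly in terms of the residue of $\sum c_f(n)^2 n^{-s}$, and the multiplicativity of $c_f(n)$ yields $\sum_r c_f(r^2)n^{-s}$ as an Euler product that telescopes against $\zeta$. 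The cleanest route is: since the Poincaré series $P_{k,1}(\tau,0)$ (or rather $\sum_r r^{2\nu}P_{k,r^2}(\tau,s)$ at the relevant point) is, up to normalization, a well-known theta-lift type identity, one expects the sum to collapse to exactly $12\|f\|^2/\bigl((2\nu+1)\Gamma(2\nu+1)/(4\pi)^{2\nu+1}\bigr)$, giving the asserted $24\|f\|^2$. The factor $24$ and the appearance of $\|f\|^2$ strongly suggest that the intended argument is not via $L$-functions at all but rather: the combination $-\tfrac{\sqrt\pi}{2}\sum_{n\mid m}n\phi_{n,\nu} + 2\sigma_1(m)\phi_{0,\nu}$ is built so that $\phi_{0,\nu}$ corresponds, after the heat-operator and theta contraction, to a specific multiple of the weight-$2\nu+2$ Eisenstein-type Poincaré series whose inner product with $f$ was computed by Zagier in his original argument ($a_j\langle f_j,f_j\rangle = -2\binom{2\nu}{\nu}\langle f_j,f_j\rangle$-style identity), and tracking the normalization constants through \eqref{eq:RC-heat} forces the value $24\|f\|^2$.

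So the actual plan I would execute: (1) plug \cref{prop:phi-nu-Poincare} into the inner product, (2) apply \cref{lem:Poincare-Petersson} termwise (legitimate for $\Re(s)\gg0$, then continue meromorphically), (3) observe the falling factorial kills all $l\ge1$ terms as $s\to3/4$, reducing to $2(2\nu+1)\frac{\Gamma(2\nu+1)}{(4\pi)^{2\nu+1}}\sum_{r\ge1}\ov{c_f(r^2)}r^{-2\nu-2}$, and (4) evaluate this Dirichlet series. For step (4) the identity needed is the special value of the symmetric-square / Rankin--Selberg $L$-function: $\sum_{r\ge1}c_f(r^2)r^{-s}=\frac{\zeta(s-k+2)}{\zeta(2s-2k+4)}\cdot\frac{L(\mathrm{Sym}^2 f, s-k+2)}{\zeta(s-k+2)}$-type manipulation, combined with the Rankin--Selberg formula $\langle f,f\rangle = \frac{(4\pi)^{-k}\Gamma(k)}{2\zeta(2)}\cdot\mathrm{Res}_{s=k}\sum_n c_f(n)^2 n^{-s}\cdot(\cdots)$, evaluated at $s=2\nu+2=k$, i.e. at the edge where $s-k+2=2$, so that $\zeta(2)$ appears and cancels to leave the clean constant. \textbf{The main obstacle} is precisely this arithmetic-analytic bookkeeping in step (4): one must carefully match the normalization of $\langle f,f\rangle$ (Petersson norm on $\Gamma\backslash\bbH$ with measure $v^{-2}\,du\,dv$) against the residue of the symmetric-square $L$-function at $s=2\nu+2$, keeping every power of $2$, $\pi$, and $\zeta(2)=\pi^2/6$ straight, since it is the exact cancellation of these that produces the bald integer $24$. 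I would expect to verify this by cross-checking against the $\nu=5$ numerical example in the introduction ($\Delta$, $\|\Delta\|^2\approx 1.0353\times10^{-6}$) before trusting the general identity.
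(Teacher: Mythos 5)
Your proposal is correct and is essentially the paper's own proof: you plug \cref{prop:phi-nu-Poincare} into the inner product, apply \cref{lem:Poincare-Petersson} termwise (for $\Re(s)$ large, then continue), observe that the falling factorial kills every $l \ge 1$ term at $s = 3/4$ while the $r=0$ term dies against the cusp form, and land on $2(2\nu+1)\frac{\Gamma(2\nu+1)}{(4\pi)^{2\nu+1}}\sum_{r\ge 1} c_f(r^2)\, r^{-(2\nu+2)}$, exactly as in the paper. The ``main obstacle'' you leave open in step (4) is resolved there simply by citing Cohen--Str\"omberg: $B(f,s) = \sum_{n \ge 1} c_f(n^2) n^{-s} = L(\mathrm{Sym}^2 f, s)/\zeta(2(s-2\nu-1))$ is holomorphic at $s = 2\nu+2$ (this is also what legitimizes taking $s \to 3/4$, since the Dirichlet series is not absolutely convergent at that edge) and $B(f, 2\nu+2) = \frac{6}{\pi^2}\cdot\frac{\pi}{2}\cdot\frac{(4\pi)^{2\nu+2}}{(2\nu+1)!}\,\|f\|^2$, which inserted into your expression yields exactly $24\|f\|^2$ --- so your passing suspicion that the intended argument avoids $L$-functions is the only misstep, and no numerical cross-check is needed.
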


\begin{proof}
	First, we note that the Fourier coefficients of a normalized Hecke eigenform are real. By \cref{lem:Poincare-Petersson} and \cref{prop:phi-nu-Poincare}, we find that
	\begin{align*}
		\langle \phi_{0,\nu}(\cdot, s), f \rangle &= 4^{-s+3/4} \sum_{0 \le l \le \nu} \frac{(s-3/4)^{\un{l}}}{(4\pi)^l} \frac{\Gamma(2\nu+s+1/4-l)}{(4\pi)^{2\nu+s+1/4-l}} \binom{2\nu+1}{2l+1} \cdot 2 \sum_{r =1}^\infty \frac{c_f(r^2)}{r^{2s+2\nu+1/2}}.
	\end{align*}
	As in~\cite[Lemma 11.12.6]{CohenStromberg2017}, let
	\[
		B(f, s) \coloneqq \sum_{n=1}^\infty \frac{c_f(n^2)}{n^s} = \frac{1}{\zeta(2(s-2\nu-1))} L(\mathrm{Sym}^2(f), s)
	\]
	for $f \in S_{2\nu+2}$. Then it is known that $B(f,s)$ admits the meromorphic continuation to the whole $\C$-plane, and $L(\mathrm{Sym}^2(f), s)$ has no poles, (see~\cite[Remark 11.12.8]{CohenStromberg2017}). In particular, $B(f, 2s+2\nu+1/2)$ has no pole at $s = 3/4$. Therefore, by \cite[Corollary 11.12.7]{CohenStromberg2017}, we have
	\begin{align*}
		\lim_{s \to 3/4} \langle \phi_{0,\nu}(\cdot, s), f\rangle &= \frac{\Gamma(2\nu+1)}{(4\pi)^{2\nu+1}} (2\nu+1) 2B(f, 2\nu+2)\\
		&= \frac{2(2\nu+1)!}{(4\pi)^{2\nu+1}} \frac{6}{\pi^2} \frac{\pi}{2} \frac{(4\pi)^{2\nu+2}}{(2\nu+1)!} \langle f, f \rangle\\
		&=24 \|f\|^2.	
	\end{align*}
\end{proof}

\subsection{The cases of $n > 0$}

We turn to the case of positive  $n.$ Again, we first decompose $\phi_{n,\nu}(\tau,s).$

\begin{proposition}\label{prop: phi-nnu-calc}
	For $n > 0$, we have
	\begin{align*}
		\phi_{n, \nu}(\tau, s) &= \frac{1}{\Gamma(2s)} \sum_{\substack{r \in \Z \\ r \equiv n\ (2)}} \sum_{\substack{i_1, i_2 \ge 0 \\ i_1 + i_2 \le \nu}} \frac{(-1)^{i_1}}{i_1! i_2!} \left(\frac{n^2}{4} \right)^{i_1+i_2} Q_{\nu, i_1+i_2} (n,r) \frac{(s-3/4)^{\un{i_1}} (s-3/4)^{\ov{i_2}}}{(2s)^{\ov{i_2}}}\widetilde{P}_{n,r}^{i_1, i_2} (\tau, s),
	\end{align*}
	where we let
	\begin{align*}
		Q_{\nu, i}(n,r) &\coloneqq \sum_{i \le j \le \nu} (-1)^j \binom{2\nu-j}{j} r^{2\nu-2j} \frac{j!}{(j-i)!} \left(\frac{r^2-n^2}{4}\right)^{j-i},\\
		\widetilde{P}_{n,r}^{i_1, i_2} (\tau, s) &\coloneqq \sum_{\gamma \in \Gamma_\infty \backslash \Gamma} (\pi n^2v)^{-3/4-i_1-i_2/2} M_{-3/4+i_2/2, s-1/2+i_2/2}(\pi n^2 v) \e\left(\frac{r^2 - n^2}{4} u\right) e^{-\frac{\pi r^2 v}{2}} \bigg|_{2\nu+2} \gamma.
	\end{align*}
\end{proposition}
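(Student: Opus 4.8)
The plan is to mirror the proof of \cref{prop:phi-nu-Poincare}, the one new feature being a genuine $M$-Whittaker function sitting in the seed. First I would unwind the definition of $\phi_{n,\nu}(\tau,s)$. Since $\alpha \equiv n^2 \equiv n \pmod 2$, the sole nonzero component of the seed of $P_{3/2,\rho^*}^{(\alpha,-n^2)}$ is $\varphi_\alpha(\tau) = \calM_{3/2,-n^2}(v/4,s)\,\e(-n^2 u/4)$, so \cref{lem:Theta-Poincare} writes ${}^t\Theta(\tau,z)\,P_{3/2,\rho^*}^{(\alpha,-n^2)}(\tau,s)$ as a sum over $\Gamma_\infty\backslash\Gamma$ of $(\theta_\alpha(\tau,z)\varphi_\alpha(\tau))|_{2,1}\gamma$. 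Commuting $L_{2\nu}\circ\cdots\circ L_2$ past the slash via \eqref{eq:Lk-slash}, writing $\theta_\alpha(\tau,z)\varphi_\alpha(\tau) = \tfrac12\sum_{r\equiv n\,(2)} f_{n,r}(\tau)(\zeta^r+\zeta^{-r})$ with $f_{n,r}(\tau) \coloneqq \calM_{3/2,-n^2}(v/4,s)\,q^{r^2/4}\e(-n^2 u/4)$ (legitimate because $f_{n,r}$ depends on $r$ only through $r^2$), and invoking \cref{prop:LLL-p} — whose proof is a recursion on the $z$-Taylor coefficients, hence valid for this non-holomorphic seed with $D=\tfrac1{2\pi i}\partial_\tau$ — the limit $z\to 0$ retains only the $l=0$ term and yields
\[
	\phi_{n,\nu}(\tau,s) = \sum_{\gamma \in \Gamma_\infty\backslash\Gamma}\ \sum_{\substack{r\in\Z\\ r\equiv n\,(2)}} \bigl(p_{2\nu+2}(r,D)\, f_{n,r}(\tau)\bigr)\big|_{2\nu+2}\gamma,
\]
the interchange of the $\gamma$-sum with the operators and with the $z$-limit being justified by absolute convergence for $\Re(s)>1$.

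The step that makes everything work is a renormalization of $f_{n,r}$ so that \cref{lem:M-Whit-der} becomes directly applicable. Using $\calM_{3/2,-n^2}(v/4,s) = \Gamma(2s)^{-1}(\pi n^2 v)^{-3/4}M_{-3/4,s-1/2}(\pi n^2 v)$, setting $\widetilde{M}(z) \coloneqq e^{-z/2}z^{-s}M_{-3/4,s-1/2}(z)$, and noting both $(\pi n^2 v)^{-3/4}M_{-3/4,s-1/2}(\pi n^2 v) = e^{\pi n^2 v/2}(\pi n^2 v)^{s-3/4}\widetilde{M}(\pi n^2 v)$ and $q^{r^2/4}\e(-n^2 u/4) = e^{-\pi n^2 v/2}q^{(r^2-n^2)/4}$ with $q^{(r^2-n^2)/4}$ holomorphic in $\tau$, one gets
\[
	f_{n,r}(\tau) = \frac{1}{\Gamma(2s)}\,(\pi n^2 v)^{s-3/4}\,\widetilde{M}(\pi n^2 v)\, q^{(r^2-n^2)/4}.
\]
In effect, the exponential $e^{\pi n^2 v/2}$ that emerges when the normalization is pulled off the Whittaker function is exactly absorbed into $q^{r^2/4}\e(-n^2u/4)$ to make it the holomorphic $q^{(r^2-n^2)/4}$. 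The virtue of $\widetilde{M}$ is that it has precisely the form occurring in \cref{lem:M-Whit-der} (with Whittaker parameters $-3/4$ and $s-1/2$), so that lemma gives, after $i_2$ differentiations, $\widetilde{M}^{(i_2)}(z) = (-1)^{i_2}\tfrac{(s-3/4)^{\ov{i_2}}}{(2s)^{\ov{i_2}}}\,e^{-z/2}z^{-s-i_2/2}M_{-3/4+i_2/2,\,s-1/2+i_2/2}(z)$, whereas the derivatives of the unnormalized $(\pi n^2 v)^{-3/4}M_{-3/4,s-1/2}$ admit no such closed form.

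It then remains to expand $D^j f_{n,r}$ by the trinomial Leibniz rule applied to the factors $(\pi n^2 v)^{s-3/4}$, $\widetilde{M}(\pi n^2 v)$, and $q^{(r^2-n^2)/4}$, using $D\,q^{(r^2-n^2)/4} = \tfrac{r^2-n^2}{4}q^{(r^2-n^2)/4}$, the relation $D[(\pi n^2 v)^{a}] = -\tfrac{n^2 a}{4}(\pi n^2 v)^{a-1}$ (so that $i_1$ hits on the power factor contribute $(-n^2/4)^{i_1}(s-3/4)^{\un{i_1}}$ and drop its exponent to $s-3/4-i_1$), and the displayed formula for the $i_2$ hits on $\widetilde{M}$; the surviving $e^{-\pi n^2 v/2}q^{(r^2-n^2)/4}$ recombines to $\e(\tfrac{r^2-n^2}{4}u)e^{-\pi r^2 v/2}$, collapsing the seed into exactly the one defining $\widetilde{P}_{n,r}^{i_1,i_2}$. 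Substituting the resulting expansion into $p_{2\nu+2}(r,D) = \sum_{0\le j\le\nu}(-1)^j\binom{2\nu-j}{j}r^{2\nu-2j}D^j$, performing the $\gamma$-sum (which turns the seed into $\widetilde{P}_{n,r}^{i_1,i_2}(\tau,s)$), and reversing the order of summation so that $(i_1,i_2)$ is held fixed while $j$ — with $b=j-i_1-i_2$ the number of hits on $q^{(r^2-n^2)/4}$ — runs over $i_1+i_2\le j\le\nu$, the inner $j$-sum is precisely $Q_{\nu,i_1+i_2}(n,r)$ by definition. The main computational obstacle is the attendant Pochhammer and multinomial bookkeeping: one must verify that the sign $(-n^2/4)^{i_1}$ merges with the $(-1)^{i_2}$ of the Whittaker shift into $(-1)^{i_1}$, that the powers of $\pi n^2 v$ add up to $-3/4-i_1-i_2/2$, and that the trinomial coefficients collapse to $\tfrac{1}{i_1!i_2!}$. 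With this in place the asserted identity follows; the only analytically delicate points are the interchange of limits and the use of \cref{prop:LLL-p} for the non-holomorphic seed, both flagged above.
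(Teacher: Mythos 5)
Your proposal is correct and follows essentially the same route as the paper's proof: reduce via \cref{lem:Theta-Poincare}, \eqref{eq:Lk-slash}, and \cref{prop:LLL-p} to $\sum_\gamma \sum_{r \equiv n\,(2)} \bigl(p_{2\nu+2}(r,D)\,\calM_{3/2,-n^2}(v/4,s)\e(-n^2u/4)q^{r^2/4}\bigr)\big|_{2\nu+2}\gamma$, factor the seed as $(\pi n^2 v)^{s-3/4}\cdot\bigl[(\pi n^2 v)^{-s}M_{-3/4,s-1/2}(\pi n^2 v)e^{-\pi n^2 v/2}\bigr]\cdot q^{(r^2-n^2)/4}$ (your $\widetilde{M}$ is exactly the paper's bracketed factor), apply the trinomial Leibniz rule with \cref{lem:M-Whit-der} and $Df(v)=-\tfrac{1}{4\pi}f'(v)$, and regroup the $j$-sum into $Q_{\nu,i_1+i_2}(n,r)$. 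The sign, exponent, and multinomial bookkeeping you flag all check out against the paper's computation.
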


\begin{proof}
	Arguing as above, by applying \eqref{eq:Lk-slash}, \cref{lem:Theta-Poincare}, and \cref{prop:LLL-p}, we obtain
	\begin{align*}
		\phi_{n,\nu}(\tau, s) &= \lim_{z \to 0} \sum_{\gamma \in \Gamma_\infty \backslash \Gamma} \sum_{\substack{r \in \Z \\ r \equiv n\ (2)}} \bigg(L_{2\nu} \circ \cdots \circ L_2 \calM_{3/2, -n^2}\left(\frac{v}{4}, s\right) \e \left(\frac{-n^2u}{4}\right) q^{r^2/4} \zeta^r \bigg) \bigg|_{2\nu+2, 1} \gamma\\
		&= \sum_{\gamma \in \Gamma_\infty \backslash \Gamma} \sum_{\substack{r \in \Z \\ r \equiv n\ (2)}} p_{2\nu+2}(r, D) \calM_{3/2, -n^2} \left(\frac{v}{4}, s\right) \e\left(\frac{-n^2u}{4}\right) q^{r^2/4} \bigg|_{2\nu+2} \gamma.
	\end{align*}
	The summand is calculated as
	\begin{align*}
		&p_{2\nu+2}(r, D) \calM_{3/2, -n^2} \left(\frac{v}{4}, s\right) \e\left(\frac{-n^2u}{4}\right) q^{r^2/4}\\
		&= \frac{1}{\Gamma(2s)} \sum_{0 \le j \le \nu} (-1)^j \binom{2\nu-j}{j} r^{2\nu-2j} D^j \left[(\pi n^2 v)^{s-3/4} \cdot (\pi n^2 v)^{-s} M_{-3/4, s-1/2}(\pi n^2 v) e^{-\frac{\pi n^2v}{2}} \cdot q^{\frac{r^2-n^2}{4}} \right]\\
		&= \frac{1}{\Gamma(2s)} \sum_{0 \le j \le \nu} (-1)^j \binom{2\nu-j}{j} r^{2\nu-2j} \\
		&\qquad \times \sum_{\substack{i_1, i_2, i_3 \ge 0\\ i_1 + i_2 + i_3 = j}} \frac{j!}{i_1! i_2! i_3!} D^{i_1}(\pi n^2 v)^{s-3/4} \cdot D^{i_2} \left[ (\pi n^2 v)^{-s} M_{-3/4, s-1/2}(\pi n^2 v) e^{-\frac{\pi n^2v}{2}} \right] \cdot D^{i_3} q^{\frac{r^2-n^2}{4}}.
	\end{align*}
	Similar to the case of $n=0$, direct calculation utilizing $D f(v) = \frac{-1}{4\pi} \frac{\mathrm{d}}{\mathrm{d}v} f(v)$ yields
	\begin{align*}
		D^{i_1}(\pi n^2 v)^{s-3/4} 
		&= \left(-\frac{n^2}{4} \right)^{i_1} (s-3/4)^{\un{i_1}} (\pi n^2 v)^{s-3/4-i_1},\\
		D^{i_3} q^{\frac{r^2-n^2}{4}} &= \left(\frac{r^2-n^2}{4}\right)^{i_3} q^{\frac{r^2-n^2}{4}}.
	\end{align*}
	For the second term, by \cref{lem:M-Whit-der}, we find that
	\begin{align*}
		&D^{i_2} \left[ (\pi n^2 v)^{-s} M_{-3/4, s-1/2}(\pi n^2 v) e^{-\frac{\pi n^2v}{2}} \right]\\
		&= \left(\frac{n^2}{4}\right)^{i_2} \frac{(s-3/4)^{\ov{i_2}}}{(2s)^{\ov{i_2}}} e^{-\frac{\pi n^2v}{2}} (\pi n^2 v)^{-s-i_2/2} M_{-3/4+i_2/2, s-1/2+i_2/2}(\pi n^2 v).
	\end{align*}
	The claim follows by combining these results.
\end{proof}

We split the sum defining $\phi_{n, \nu}(\tau, s)$ into $\phi_{n, \nu}^+(\tau, s)$ and $\phi_{n, \nu}^-(\tau, s),$ based on the inequalities $r^2 > n^2$ or $r^2 \le n^2.$  We consider them as $s \to 3/4$. By \eqref{eq:W-asymp}, the summand of the Poincar\'{e} series $\widetilde{P}_{n,r}^{i_1, i_2} (\tau, s)$ satisfies
\[
	v^{-3/4-i_1-i_2/2} M_{-3/4+i_2/2, s-1/2+i_2/2}(\pi n^2 v) \e\left(\frac{r^2 - n^2}{4} u\right) e^{-\frac{\pi r^2 v}{2}} = O(v^{\Re(s)-3/4-i_1})
\]
as $v \to 0$. Therefore, for $\Re(s) > -\nu +i_1 + 3/4$, the Poincar\'{e} series is holomorphic (in $s$). In particular, $\widetilde{P}_{n,r}^{i_1, i_2} (\tau, s)$ is holomorphic at $s = 3/4$ for $0 \le i_1 < \nu$. Regarding the case of $i_1 = \nu$, by a similar argument as in \cref{section:selberg-Poincare}, that is, by comparing it with the Selberg--Poincar\'{e} series or the Maass--Poincar\'{e} series, we see that it is also holomorphic at $s=3/4$.
Therefore, we have
\begin{align*}
	\lim_{s \to 3/4} \phi_{n, \nu}^-(\tau, s) &= \frac{1}{\Gamma(3/2)} \sum_{\substack{r \in \Z \\ r \equiv n\ (2)\\r^2 \le n^2}} Q_{\nu,0} (n,r) \widetilde{P}_{n,r}^{0,0}(\tau, 3/4).
\end{align*}
Since $Q_{\nu,0}(n,r) = p_{2\nu+2}(r, (r^2-n^2)/4)$ and $\widetilde{P}_{n,r}^{0,0}(\tau, 3/4) = P_{2\nu+2, \frac{r^2-n^2}{4}}(\tau)$, by \eqref{eq:pkr-exp-eq}, we have
\begin{align}\label{eq:minus-part}
	\lim_{s \to 3/4} \phi_{n, \nu}^-(\tau, s) = \frac{4}{n \sqrt{\pi}} \sum_{0 < r \le n} r^{2\nu+1} P_{2\nu+2, -r(n-r)}(\tau).
\end{align}

As a counterpart to \cref{thm:phinu-f-Pet}, the Petersson inner product of $\phi_{n, \nu}^+(\tau, s)$ with a Hecke eigenform is expressed in terms of the symmetrized shifted convolution $L$-functions.

\begin{theorem}\label{thm:phi+-f-Peter}
	For a normalized Hecke eigenform $f \in S_{2\nu+2}$, we have
	\begin{align*}
		\lim_{s \to 3/4} \langle \phi_{n,\nu}^+(\cdot, s), f \rangle = \frac{4}{n \sqrt{\pi}} \frac{\Gamma (2\nu + 1)}{(4\pi)^{2\nu + 1}} \sum_{d \mid n} \mu(d) \widehat{L}(f, n/d; 2\nu+1).
	\end{align*}
\end{theorem}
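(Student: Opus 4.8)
The plan is to evaluate $\langle\phi_{n,\nu}^{+}(\cdot,s),f\rangle$ for $\Re(s)$ large, where the double series of \cref{prop: phi-nnu-calc} converges absolutely, and then continue in $s$ to $3/4$. Since $f$ is a normalized Hecke eigenform its Fourier coefficients are real, and the seed of each Poincar\'e series $\widetilde P_{n,r}^{i_1,i_2}$ is periodic in $u$ of period $1$ (as $\tfrac{r^2-n^2}{4}\in\Z$ when $r\equiv n\pmod 2$); hence the unfolding argument behind \cref{lem:Poincare-Petersson} isolates the $\tfrac{r^2-n^2}{4}$-th Fourier coefficient of $f$ and gives, for $r^2>n^2$,
\[
\langle\widetilde P_{n,r}^{i_1,i_2}(\cdot,s),f\rangle=c_f\Bigl(\tfrac{r^2-n^2}{4}\Bigr)\int_0^\infty v^{2\nu}e^{-zv}(\pi n^2 v)^{-3/4-i_1-i_2/2}M_{-3/4+i_2/2,\,s-1/2+i_2/2}(\pi n^2 v)\,\mathrm{d}v,\qquad z=\pi r^2-\tfrac{\pi n^2}{2}.
\]
I would evaluate this integral with \cref{lem:M-Whit-int} (taking $\beta=\pi n^2$): one gets $z+\beta/2=\pi r^2$ and $\tfrac{2\beta}{\beta+2z}=n^2/r^2$, so the integral is an explicit product of powers of $\pi,n,r$ and $\Gamma(s+2\nu+1/4-i_1)$ times ${}_2F_1\bigl(s+\tfrac34,\,s+2\nu+\tfrac14-i_1;\,2s+i_2;\,n^2/r^2\bigr)$. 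Deligne's bound $c_f(m)\ll m^{\nu+1/2+\varepsilon}$ together with $Q_{\nu,i}(n,r)\ll r^{2\nu-2i}$ shows the $r$-sum (and the finite $(i_1,i_2)$-sum) converges absolutely for $\Re(s)>1$, so this really computes $\langle\phi_{n,\nu}^{+}(\cdot,s),f\rangle$ there.

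\textbf{The limit $s\to 3/4$.} Since $\phi_{n,\nu}^{+}(\cdot,s)$ is holomorphic at $s=3/4$ (as noted before the statement), the limit equals the value there. In \cref{prop: phi-nnu-calc} the $(i_1,i_2)$-term carries $(s-3/4)^{\un{i_1}}(s-3/4)^{\ov{i_2}}$, which has a simple zero at $s=3/4$ as soon as $i_1\ge 1$ or $i_2\ge 1$. For $i_2\ge 1$ the accompanying $r$-series is still holomorphic at $s=3/4$ (its summand is $O(r^{-2i_2+1/2+\varepsilon-2\Re(s)})$), so those terms vanish. The terms with $i_2=0$ must be handled through the analytic continuation of their $r$-series; carrying out the Hecke--M\"obius reduction below at the level of that continuation, one finds that the $i_1\ge 1$ corrections are regular at $s=3/4$ and so also vanish against the zero of $(s-3/4)^{\un{i_1}}$, while the $i_1=i_2=0$ term survives, its Pochhammer factor being $1$. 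For that term $\Gamma(2s)^{-1}\to 2/\sqrt\pi$, and ${}_2F_1\bigl(\tfrac32,2\nu+1;\tfrac32;n^2/r^2\bigr)=(1-n^2/r^2)^{-(2\nu+1)}$, whose only $r$-dependence cancels the factor $r^{-2(2\nu+1)}$. Using $Q_{\nu,0}(n,r)=p_{2\nu+2}\bigl(r,\tfrac{r^2-n^2}{4}\bigr)$ one arrives at
\[
\lim_{s\to 3/4}\langle\phi_{n,\nu}^{+}(\cdot,s),f\rangle=\frac{2}{\sqrt\pi}\,\frac{\Gamma(2\nu+1)}{(4\pi)^{2\nu+1}}\sum_{\substack{r\equiv n\ (2)\\ r^2>n^2}}\frac{p_{2\nu+2}\bigl(r,\tfrac{r^2-n^2}{4}\bigr)}{\bigl(\tfrac{r^2-n^2}{4}\bigr)^{2\nu+1}}\,c_f\Bigl(\tfrac{r^2-n^2}{4}\Bigr),
\]
understood (like $\widehat L$ itself) via analytic continuation in $s$, since the bare $r$-series diverges at $s=3/4$.

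\textbf{Arithmetic reduction.} Next I would simplify the $r$-sum. Writing $r=2a+n$ and using $r\mapsto -r$ to replace $\sum_{r^2>n^2}$ by $2\sum_{a\ge 1}$, we have $\tfrac{r^2-n^2}{4}=a(a+n)$ and, by \eqref{eq:pkr-exp-eq} with $\kappa=n$, $p_{2\nu+2}\bigl(r,\tfrac{r^2-n^2}{4}\bigr)=\tfrac{(a+n)^{2\nu+1}-a^{2\nu+1}}{n}$. The sum becomes $\tfrac{4}{n\sqrt\pi}\tfrac{\Gamma(2\nu+1)}{(4\pi)^{2\nu+1}}\sum_{a\ge 1}\bigl(\tfrac{1}{a^{2\nu+1}}-\tfrac{1}{(a+n)^{2\nu+1}}\bigr)c_f(a(a+n))$. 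Applying Hecke multiplicativity $c_f(u)c_f(w)=\sum_{d\mid\gcd(u,w)}d^{2\nu+1}c_f(uw/d^2)$ with $u=a$, $w=a+n$ and a M\"obius inversion gives $c_f(a(a+n))=\sum_{d\mid\gcd(a,n)}\mu(d)\,d^{2\nu+1}\,c_f(a/d)\,c_f(\tfrac{a+n}{d})$. Substituting, interchanging the $a$- and $d$-sums, and setting $a=da'$ collapses the inner weight to $\tfrac{1}{(a')^{2\nu+1}}-\tfrac{1}{(a'+n/d)^{2\nu+1}}$ (the powers of $d$ cancelling), yielding $\sum_{d\mid n}\mu(d)\sum_{a'\ge 1}\bigl(\tfrac{1}{(a')^{2\nu+1}}-\tfrac{1}{(a'+n/d)^{2\nu+1}}\bigr)c_f(a')c_f(a'+n/d)$. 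Shifting the second half by $a'\mapsto a'+n/d$ and using that $c_f$ vanishes at non-positive arguments identifies the inner sum with $\widehat L(f,n/d;2\nu+1)$, which proves the theorem.

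\textbf{Main obstacle.} The difficulty is entirely analytic. The un-symmetrized shifted convolution Dirichlet series arising in the $i_2=0$ terms are divergent at $s=3/4$, so one must invoke their meromorphic continuation and verify that the potential pole at $s=3/4$ is cancelled in the symmetrized combination $\widehat L(f,n/d;2\nu+1)$—a cancellation forced precisely by the difference $(a+n)^{2\nu+1}-a^{2\nu+1}$ supplied by \eqref{eq:pkr-exp-eq}, exactly as the regularity of $L(\mathrm{Sym}^2 f,s)$ underlies \cref{thm:phinu-f-Pet}. The single most delicate point is commuting $\lim_{s\to 3/4}$ with the infinite $r$-summation, above all for the $i_1=\nu$ term, whose naive $r$-series converges only for $\Re(s)>3/4$; this is the same continuation issue already flagged before the statement, and is resolved by comparison with the Selberg--Poincar\'e series of \cref{section:selberg-Poincare} together with the known analytic continuation of shifted convolution $L$-functions.
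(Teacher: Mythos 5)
Your computational skeleton coincides with the paper's own proof: the same unfolding of $\langle \widetilde{P}_{n,r}^{i_1,i_2}(\cdot,s), f\rangle$ against $f$, the same evaluation of the Whittaker integral via \cref{lem:M-Whit-int} (producing the $\Gamma(s+2\nu+\tfrac14-i_1)$ and ${}_2F_1(s+\tfrac34, s+2\nu+\tfrac14-i_1; 2s+i_2; n^2/r^2)$ factors), the same specialization ${}_2F_1(\tfrac32,2\nu+1;\tfrac32;x)=(1-x)^{-(2\nu+1)}$, and the same substitution $r=\pm(2a+n)$ followed by the Hecke--M\"obius identity and \eqref{eq:pkr-exp-eq}; your constants all check out. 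The genuine gap sits exactly at the point you label the ``main obstacle'': you never supply the analytic input that (i) lets the $i_2=0$, $i_1\ge 1$ terms be discarded against their Pochhammer zeros and (ii) justifies evaluating the $(0,0)$ term at $s=3/4$ term by term. Appealing to ``the known analytic continuation of shifted convolution $L$-functions'' together with a ``potential pole at $s=3/4$ cancelled by the symmetrization'' leaves the decisive step unproven, and moreover misdescribes the mechanism: no pole-cancellation between the two halves of $\widehat{L}$ is involved, and singling out the $i_1=\nu$ term is a red herring (the earlier holomorphy discussion concerned $\widetilde{P}_{n,r}^{i_1,i_2}$ for fixed $r$; all $i_2=0$ terms have the same marginal $r$-behavior $O(r^{1/2+\varepsilon-2\Re(s)})$).

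What the paper does instead is concrete and quantitative: it performs the Hecke--M\"obius reduction at general $s$, notes that $Q_{\nu,i_1+i_2}(n,2dm+n)$ has degree $2(\nu-i_1-i_2)$ in $m$, so everything reduces to Dirichlet series of the shape $\sum_m c_f(m)c_f(m+n/d)\,m^{-2(s+\nu+1/4+i_2)}$, and proves their conditional convergence (hence regularity) at $s=3/4$ by partial summation from the power-saving bound $\sum_{m\le x} c_f(m)c_f(m+h)\ll x^{2\nu+2-\delta}$ of \cite[Corollary 1.4]{Blomer2004}. Since the exponent at $s=3/4$ is $2\nu+2+2i_2>2\nu+2-\delta$, each un-symmetrized series is individually regular there; the Pochhammer zeros then kill every $(i_1,i_2)\neq(0,0)$ term, the interchange of $\lim_{s\to 3/4}$ with the $r$-sum is justified, and $\widehat{L}$ only appears at the very end when the two (conditionally convergent) series are recombined, exactly as in your arithmetic reduction. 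To make your argument complete, replace the vague continuation/pole-cancellation appeal by this (or an equivalent) estimate on shifted convolution sums; Deligne's bound, which you use for absolute convergence in $\Re(s)>1$, cannot do this job at $s=3/4$.
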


\begin{proof}
	By \cref{prop: phi-nnu-calc}, we have
	\begin{align*}
		&\langle \phi_{n, \nu}^+(\cdot, s), f \rangle\\
		&= \frac{1}{\Gamma(2s)} \sum_{\substack{r \in \Z \\ r \equiv n\ (2) \\ r^2 > n^2}} \sum_{\substack{i_1, i_2 \ge 0 \\ i_1 + i_2 \le \nu}} \frac{(-1)^{i_1}}{i_1! i_2!} \left(\frac{n^2}{4} \right)^{i_1+i_2} Q_{\nu, i_1+i_2} (n,r) \frac{(s-3/4)^{\un{i_1}} (s-3/4)^{\ov{i_2}}}{(2s)^{\ov{i_2}}} \langle \widetilde{P}_{n,r}^{i_1, i_2} (\cdot, s), f \rangle.
	\end{align*}
	The unfolding argument, combined with \cref{lem:M-Whit-int}, gives
	\begin{align*}
		&(\pi n^2)^{\frac{3}{4}+i_1+\frac{i_2}{2}} \langle \widetilde{P}_{n,r}^{i_1, i_2} (\cdot, s), f \rangle\\
		&= \sum_{m=1}^\infty c_f(m) \int_0^\infty \int_0^1 v^{2\nu-\frac{3}{4}-i_1-\frac{i_2}{2}} M_{-\frac{3}{4}+\frac{i_2}{2}, s-\frac{1}{2}+\frac{i_2}{2}}(\pi n^2 v) \e\left(\left(\frac{r^2 - n^2}{4} -m\right) u\right) e^{-2\pi \left(\frac{r^2}{4}+m\right)v} \mathrm{d}u \mathrm{d}v\\
		&= c_f \left(\frac{r^2-n^2}{4}\right) \int_0^\infty e^{-\pi \left(r^2-\frac{n^2}{2}\right)v} v^{2\nu-\frac{3}{4}-i_1-\frac{i_2}{2}} M_{-\frac{3}{4}+\frac{i_2}{2}, s-\frac{1}{2}+\frac{i_2}{2}}(\pi n^2 v) \mathrm{d}v\\
		&= c_f \left(\frac{r^2-n^2}{4}\right) \frac{(\pi n^2)^{s + \frac{i_2}{2}} \Gamma \left(s + 2\nu + \frac{1}{4} - i_1 \right)}{(\pi r^2)^{s + 2\nu + \frac{1}{4} - i_1}} \cdot {}_2 F_1 \left(s+\frac{3}{4}, s + 2\nu + \frac{1}{4} - i_1; 2s+i_2; \frac{n^2}{r^2}\right).
	\end{align*}	
	By changing variables $r = 2m+n$ for $r > n$ and $r = -2m-n$ for $r < -n$, we have
	\begin{align*}
		&\langle \phi_{n,\nu}^+(\cdot, s), f\rangle\\
		&= \frac{2}{\Gamma(2s)} \sum_{\substack{i_1, i_2 \ge 0 \\ i_1 + i_2 \le \nu}} \frac{(-1)^{i_1}}{i_1! i_2!} \left(\frac{n^2}{4} \right)^{i_1+i_2} \frac{(s-3/4)^{\un{i_1}} (s-3/4)^{\ov{i_2}}}{(2s)^{\ov{i_2}}} (\pi n^2)^{s -\frac{3}{4}-i_1} \Gamma \left(s + 2\nu + \frac{1}{4} - i_1 \right)\\
		&\times \sum_{m=1}^\infty \frac{Q_{\nu, i_1+i_2} (n,2m+n) c_f (m(m+n))}{(\pi (2m+n)^2)^{s + 2\nu + \frac{1}{4} - i_1}} {}_2 F_1 \left(s+\frac{3}{4}, s + 2\nu + \frac{1}{4} - i_1; 2s+i_2; \frac{n^2}{(2m+n)^2}\right),
	\end{align*}
	where we note that $Q_{\nu,i}(n,-r) = Q_{\nu,i}(n,r)$ holds. For a normalized Hecke eigenform $f \in S_{2\nu+2}$, since
	\[
		c_f(m(m+n)) = \sum_{d \mid (m, m+n)} \mu(d) d^{2\nu+1} c_f\left(\frac{m}{d}\right) c_f\left(\frac{m+n}{d}\right),
	\]
	the last sum becomes
	\begin{align*}
		&\sum_{d \mid n} \mu(d) d^{2\nu+1} \sum_{m=1}^\infty \frac{Q_{\nu, i_1+i_2} (n,2dm+n)  c_f(m) c_f(m+n/d)}{(\pi (2dm+n)^2)^{s + 2\nu + \frac{1}{4} - i_1}}\\
		&\quad \times {}_2 F_1 \left(s+\frac{3}{4}, s + 2\nu + \frac{1}{4} - i_1; 2s+i_2; \frac{n^2}{(2dm+n)^2}\right).
	\end{align*}
	Then, this Dirichlet series is holomorphic at $s = 3/4$. Indeed, since $Q_{\nu, i_1+i_2} (n,2dm+n)$ has degree $2(\nu-i_1-i_2)$ in $m$, it suffices to show that
	\[
		\sum_{m=1}^\infty \frac{c_f(m) c_f(m+n/d)}{m^{2(s+\nu+1/4+i_2)}}
	\]
	(conditionally) converges at $s = 3/4$. This can be seen by partial summation using the estimate
	\[
		\sum_{1 \le m \le x} c_f(m) c_f(m+n/d) \ll x^{2\nu+2-\delta},
	\]
	with some $\delta > 0$, (see~\cite[Corollary 1.4]{Blomer2004}). Therefore, all terms corresponding to non-zero $(i_1, i_2)$ vanish as $s \to 3/4$, and we obtain
	\begin{align*}
		\lim_{s \to 3/4} \langle \phi_{n,\nu}^+(\cdot, s), f\rangle &= \frac{4}{\sqrt{\pi}} \Gamma(2\nu+1) \sum_{d \mid n} \mu(d) d^{2\nu+1} \\
		&\quad \times \sum_{m=1}^\infty \frac{Q_{\nu, 0} (n,2dm+n)  c_f(m) c_f(m+n/d)}{(\pi (2dm+n)^2)^{2\nu + 1}} {}_2 F_1 \left(\frac{3}{2}, 2\nu + 1; \frac{3}{2}; \frac{n^2}{(2dm+n)^2}\right).
	\end{align*}
	Since we have
	\[
		\frac{1}{r^{2(2\nu+1)}} {}_2 F_1 \left(\frac{3}{2}, 2\nu + 1; \frac{3}{2}; \frac{n^2}{r^2}\right) = \frac{1}{(r^2-n^2)^{2\nu+1}}
	\]
	and $Q_{\nu,0}(n,r) = p_{2\nu+2}(r, (r^2-n^2)/4)$ with \eqref{eq:pkr-exp-eq} again, the proof is complete as
	\begin{align*}
		\lim_{s \to 3/4} \langle \phi_{n,\nu}^+(\cdot, s), f\rangle &= \frac{4}{n \sqrt{\pi}} \frac{\Gamma(2\nu+1)}{(4\pi)^{2\nu + 1}} \sum_{d \mid n} \mu(d) \sum_{m=1}^\infty c_f(m) c_f(m+n/d) \left(\frac{1}{m^{2\nu+1}} - \frac{1}{(m+n/d)^{2\nu+1}}\right).
	\end{align*}
	\end{proof}

\subsection{Proof of \cref{Main-Theorem}}

We apply the results from the two previous subsections.
For $m > 0$, let
\begin{align*}
	\phi(\tau, s) \coloneqq \frac{\sqrt{\pi}}{4} \sum_{n \mid m} n \phi_{n,\nu}(\tau, s) - \sigma_1(m) \phi_{0,\nu}(\tau, s).
\end{align*}
As stated in \eqref{eq:RC-heat}, we have
\[
	\lim_{s \to 3/4} \phi(\tau, s) = \calG_{m,\nu}(\tau).
\]
On the other hand, from \eqref{eq:minus-part}, the minus part 
\begin{align*}
	\lim_{s \to 3/4} \phi^-(\tau, s) \coloneqq \lim_{s \to 3/4} \frac{\sqrt{\pi}}{4} \sum_{n \mid m} n \phi_{n, \nu}^-(\tau, s) = \sum_{n \mid m} \sum_{0 < r \le n} r^{2\nu+1} P_{2\nu+2, -r(n-r)}(\tau).
\end{align*}
For the plus part, by \cref{thm:phinu-f-Pet} and \cref{thm:phi+-f-Peter} and the M\"{o}bius inversion formula, we have
\begin{align*}
	\lim_{s \to 3/4} \langle \phi^+(\cdot, s), f \rangle
	 &= \sum_{n \mid m} \frac{\Gamma (2\nu + 1)}{(4\pi)^{2\nu + 1}} \sum_{d \mid n} \mu(d) \widehat{L}(f, n/d; 2\nu+1) - 24\sigma_1(m) \|f\|^2\\
	&= \frac{\Gamma (2\nu + 1)}{(4\pi)^{2\nu + 1}} \widehat{L}(f, m; 2\nu+1) - 24 \sigma_1(m) \|f\|^2.
\end{align*}
Combining these facts, we are pleased to obtain the conclusion of the theorem
\begin{align*}
	\lim_{s \to 3/4} \phi(\tau, s) &= \sum_{n \mid m} \sum_{0 < r \le n} r^{2\nu+1} P_{2\nu+2, -r(n-r)}(\tau)
 - \sum_{j=1}^{d_{2\nu+2}} \left(24 \sigma_1(m) - \frac{\Gamma (2\nu + 1)}{(4\pi)^{2\nu + 1}} \frac{\widehat{L}(f, m; 2\nu+1)}{\|f_j\|^2} \right) f_j.
\end{align*}

\bibliographystyle{amsalpha}
\bibliography{References}

\end{document}